\theoremstyle{plain}
\newtheorem*{introtheorem}{Theorem}
\newtheorem{theorem}{Theorem}[section]
\newtheorem{thm}[theorem]{Theorem}
\newtheorem{cor}[theorem]{Corollary}
\newtheorem{lem}[theorem]{Lemma}
\newtheorem{prop}[theorem]{Proposition}
\theoremstyle{definition}
\newtheorem{defn}[theorem]{Definition}
\newtheorem{cond}[theorem]{Conditions}
\newtheorem{rmk}[theorem]{Remark}
\newtheorem{constr}[theorem]{Construction}
\newtheorem{notat}[theorem]{Notation}
\newtheorem{setup}[theorem]{Setup}
\newtheorem{sit}[theorem]{Situation}
\theoremstyle{remark}
\newcommand{\kspace}[3]{\ensuremath{\overline {\mathcal{M}}_{0,#1}(#2, #3)}}
\newcommand{\kspacenb}[3]{\ensuremath{\mathcal{M}}_{0,#1}(#2, #3)}
\newcommand{\Span}{\text{Span}}
\newcommand{\marpar}[1]{}
\newcommand{\ZZ}{\mathbb{Z}}
\newcommand{\CC}{\mathbb{C}}
\newcommand{\PP}{\mathbb{P}}
\newcommand{\mc}{\mathcal}
\newcommand{\mb}{\mathbf}
\newcommand{\OO}{\mc{O}}
\newcommand{\singlespace}{\renewcommand{\baselinestretch}{1.15} \small \normalsize}
\title[]{Relatively Very Free Curves and Rational Simple Connectedness}
\author[DeLand]{Matt DeLand}
\address{Department of Mathematics \\
  SUNY Stony Brook\\ Stony Brook, NY 11790}
\email{deland@math.sunysb.edu}
\date{\today}
\begin{document}


\begin{abstract}
Given a morphism between smooth projective varieties $f: W \rightarrow X$, we study whether $f$-relatively free rational curves imply the existence of $f$-relatively very free rational curves.  The answer is shown to be positive when the fibers of the map $f$ have Picard number $1$ and a further smoothness assumption is imposed.  The main application is when $X \subset \PP^n$ is a smooth complete intersection of type $(d_1, \ldots, d_c)$ and $\sum d_i^2 \leq n$.  In this case, we take $W$ to be the space of pointed lines contained in $X$ and the positive answer to the question implies that $X$ contains very twisting ruled surfaces and is strongly rationally simply connected.  If the fibers of a smooth family of varieties over a $2$-dimensional base satisfy these conditions and the Brauer obstruction vanishes, then the family has a rational section (see \cite{dJHS}).
\end{abstract}


\maketitle

\section{Introduction}
In what follows, we fix the ground field to be the complex numbers.  The first part of this paper (through Section~\ref{section_geometry}) addresses the following general setup.

\begin{setup}\label{setup_main}
Let $f: W \rightarrow X$ be a surjective, proper morphism between irreducible, smooth, projective varieties of dimensions $m$ and $n$ respectively.  We assume that for a general $x \in X$, $f^{-1}(x)$ has Picard number $1$ (in particular, it is connected).  Suppose we have a map $\phi: \PP^1 \rightarrow W$ such that
\begin{itemize}
\item (Hypothesis A) The bundle $\phi^*f^*T_X$ is globally generated.  
\item (Hypothesis B) The sheaf $\phi^*T_f$ is locally free, globally generated, and has positive degree.
\end{itemize}
\end{setup}

The goal of this work is to identify additional (and hopefully verifiable) hypothesis under which there exists a map $\phi': \PP^1 \rightarrow W$ such that Hypothesis A is satisfied for $\phi'$ and such that $\phi'^*T_f$ is locally free and ample.

\begin{rmk}\label{rmk_general_case}
Recall that $T_f$ is defined to be the kernel of the differential $T_W \rightarrow f^*T_X$.  Requiring that $\phi^*T_f$ is locally free is equivalent to saying that the image of $\phi$ is contained in the smooth locus of $f$.
\end{rmk}

\begin{defn}
Let $f: W \rightarrow X$ be a map of varieties and $\phi: \PP^1 \rightarrow W$ be a map whose image is contained in the smooth locus of $f$.  Such a rational curve is called \textit{relatively free} if $\phi^*T_f$ is globally generated with positive degree and is called \textit{relatively very free} if $\phi^* T_f$ is, in addition, ample.
\end{defn}

The reason that we are interested in producing relatively very free curves has to do with producing rational sections of fibrations over surfaces.  Recall that when $f: \mc{X} \rightarrow C$ is a flat, proper, generically smooth morphism to a curve whose geometric generic fiber is rationally connected, then $f$ admits a section.  This is the statement of the Graber, Harris, Starr theorem; see \cite{GraberHarrisStarr}.  When the base has dimension $2$, there has been recent progress in identifying conditions on the geometric fiber of $f: \mc{X} \rightarrow S$ (a flat, proper, generically smooth morphism to a surface) which will guarantee the existence of a rational section.

\begin{thm}(Corollary 1.1, \cite{dJHS})\label{thm_sections_exist}
Let $f: X \rightarrow S$ be a flat, proper morphism to a smooth, irreducible, projective surface.  Assume there is an open $U \subset S$ whose complement has codimension two such that $f^{-1}(U)$ is smooth and $f^{-1}(U) \rightarrow U$ has geometrically irreducible fibers.  Let $\mc{L}$ be an $f$-ample invertible sheaf on $f^{-1}(U)$.  Assume that the geometric generic fiber of $f$ is rationally simply connected by chains of free lines and admits a very twisting surface.  Then there is a rational section of $f$.
\end{thm}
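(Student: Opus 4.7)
The plan is to mimic the Graber--Harris--Starr strategy, where for a one-dimensional base the rational connectedness of the fiber lets one extend a ``formal section'' through special rational curves into a genuine section. Here the base is a surface, so the one-dimensional argument does not directly apply; the extra hypotheses on the fiber precisely substitute for what is lost in going from dimension one to dimension two. The target is to produce a comb-like rational curve in $X$, mapping birationally onto a curve in $S$ and having an ample-enough relative normal sheaf to deform into a rational section.

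After restricting to $f^{-1}(U) \to U$ and replacing $S$ by a suitable birational model (which does not affect the existence of a rational section), I would begin by fixing a Lefschetz-type pencil $\{C_t\}_{t \in \PP^1}$ of curves on $S$ passing through a finite base locus $B \subset U$. Over each smooth $C_t$, the family $X_{C_t} \to C_t$ is flat, proper, and has rationally connected geometric generic fiber, so Graber--Harris--Starr yields a section $\sigma_t: C_t \to X_{C_t}$, varying algebraically in $t$ away from the discriminant of the pencil. The difficult task is to glue these sections into a single rational curve mapping well to a member of the pencil.

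For each base point $b \in B$, the values $\sigma_t(b)$ trace out pairs of points in the fiber $X_b$. Rational simple connectedness by chains of free lines gives that the two-pointed Kontsevich moduli space $\Kbm{0,2}{X_b, e}$ has, for some class $e$, a rationally connected component dominating $X_b \times X_b$ under evaluation. I would use this to interpolate $\sigma_t(b)$ and $\sigma_{t'}(b)$ by chains of free lines in the fibers over $B$, producing a connected, reducible ``comb-like'' curve $D \subset X$ whose image in $S$ is a general $C_t$ together with incidental rulings in the fibers over points of $B$.

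Finally, I would attach twigs drawn from a very twisting ruled surface: such a surface produces rulings $\phi: \PP^1 \to X$ with $\phi^*T_f$ ample and $\phi^*f^*T_S$ positive enough that grafting copies of $\phi$ onto $D$ at carefully chosen general points yields a comb $D'$ whose relative normal sheaf $N_{D'/X,f}$ is globally generated and satisfies $H^1(D', N_{D'/X,f}) = 0$. A standard comb-smoothing argument of Koll\'ar--Miyaoka--Mori type then deforms $D'$ to an irreducible rational curve which is a section of $f$ over its image in $S$, and letting the initial $C_t$ sweep out the pencil produces a rational section of $f$ over $S$. The hardest step is this final smoothing: arranging attachment points and multiplicities so that the deformed curve remains degree one over members of the pencil, rather than gaining spurious vertical components or multisection behavior. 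This is exactly where both hypotheses interact, with rational simple connectedness by free lines providing the flexibility to reposition attachment points while very twisting supplies the positivity needed to kill the obstructions.
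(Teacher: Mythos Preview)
The paper does not prove this theorem at all: it is quoted verbatim as Corollary~1.1 of \cite{dJHS} and used as a black box. There is therefore no proof in the paper to compare your proposal against. Your sketch is an attempt to reconstruct the argument of \cite{dJHS} itself, which lies outside the scope of this paper.

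That said, as a sketch of the de Jong--He--Starr argument your outline captures several of the correct ingredients (pencils on $S$, Graber--Harris--Starr over curves, interpolation via rational simple connectedness, positivity from very twisting surfaces) but is too impressionistic at the crucial step. In \cite{dJHS} the very twisting surface is not used merely to attach ample twigs to a comb; rather, it is used to produce a \emph{very twisting scroll}, which in turn yields a free section of the evaluation morphism from a suitable space of sections-with-attached-curves down to a parameter space. The passage from ``sections over each $C_t$'' to ``a rational section over $S$'' goes through showing that an Abel map from a space of stable sections to the relative Picard is surjective on components, and this is where the discriminant-avoidance and the Brauer-type obstruction (implicit in the existence of $\mc{L}$) enter. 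Your comb-smoothing paragraph glosses over exactly this: smoothing $D'$ to an irreducible curve that is a section over one $C_t$ is not the same as producing a rational map $S \dashrightarrow X$, and ``letting $C_t$ sweep out the pencil'' does not by itself glue the per-curve sections into a coherent family. So while the broad strokes are in the right spirit, the final two sentences hide the genuine content of the theorem.
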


We refer to their paper for a thorough discussion of each assumption.  Here we remark that very few general classes of varieties satisfying these conditions are known.  In the known examples, it seems to be the case that the existence of a ``very twisting surface" is often the most difficult of these conditions to verify.  We refer to Section~\ref{section_twisting} and Appendix~\ref{app-twisting-surfaces} for a discussion of twisting  and very twisting (or $2$-twisting) surfaces.  The definition we use in this paper is slightly different from the definition used in \cite{dJHS} (though the two can be reconciled).  

Let $X \subset \PP^n$ be a smooth projective variety and let $F_1(X)$ be the Fano scheme of pointed lines on $X$, which we will suppose is smooth and which will play the role of $W$ in the main setup.  Let $f: F_1(X) \rightarrow X$ denote the evaluation morphism.  Roughly, a relatively free morphism $\phi: \PP^1 \rightarrow F_1(X)$ which satisfies hypotheses A and B above will correspond to a $1$-twisting surface on $X$, and a relatively very free morphism will correspond to a $2$-twisting surface on $X$.  It is in this sense that we are interested in a general method for producing $2$-twisting surfaces on $X$ when we know that $1$-twisting surfaces exist.  We now state our general theorem which will be better understood after setting up the notation in Section~\ref{section_geometry}.

\begin{introtheorem}  (See Theorem~\ref{thm_rel_vf})
Assume that we are in the setting of Setup~\ref{setup_main}.  Let $M$ be the unique component of the stable mapping space containing $\phi$ and let $M_2$ be the unique component of the two pointed stable mapping space dominating $M$.  Let $\phi': (\PP^1, p, q) \rightarrow W$ be a general map in $M_2$.  If for any $z \in f^{-1}f(\phi(q))$ and any map $(\psi, p', q') \in M_2$ with $\psi(p') = \phi(p)$ and $\psi(q') = z$, the map $\psi$ is unobstructed for the second evaluation morphism, then there is a relatively very free morphism to $W$.
\end{introtheorem}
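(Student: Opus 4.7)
The plan is to argue that the unobstructedness hypothesis, applied cohomologically, directly forces a suitable $\psi$ in the given family to be relatively very free, via the splitting behavior of vector bundles on $\PP^1$.

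First, I would translate the unobstructedness hypothesis into a cohomological statement. For a stable map $\psi$ in $M_2$ with marked points $(p', q')$, the Zariski tangent space to the fiber $\mathrm{ev}_1^{-1}(\psi(p'))$ of the first evaluation at $\psi$ is, up to the finite automorphism contributions of $(\PP^1, p', q')$, a quotient of $H^0(\psi^*T_W(-p'))$, with the differential of $\mathrm{ev}_2$ restricted to this fiber given by evaluation at $q'$. Unobstructedness for $\mathrm{ev}_2$ at $\psi$ should then translate into surjectivity of $H^0(\psi^*T_W(-p')) \to T_{W,\psi(q')}$, equivalently to global generation of $\psi^*T_W(-p')$ at $q'$. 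Since $p'\neq q'$ on $\PP^1$ and $\psi^*T_W$ splits as $\bigoplus_i \mathcal{O}(c_i)$, this global generation is equivalent to $c_i \geq 1$ for all $i$, i.e., to ampleness of $\psi^*T_W$.

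Next I would feed this into the relative tangent sequence $0 \to \psi^*T_f \to \psi^*T_W \to \psi^*f^*T_X \to 0$. On $\PP^1$, every short exact sequence of vector bundles splits, so the summands of $\psi^*T_W$ partition into those of $\psi^*T_f$ and those of $\psi^*f^*T_X$; ampleness of $\psi^*T_W$ therefore forces ampleness of both subbundle and quotient, in particular of $\psi^*T_f$. Combined with local freeness of $\psi^*T_f$ --- which follows from $\psi$ avoiding the singular locus of $f$, an open condition inherited through the component $M$ of $\phi$ --- this makes $\psi$ a relatively very free morphism, producing the asserted curve. In this argument, the Picard-number-one hypothesis on the fibers of $f$ does not intervene directly in the deduction; its role is in guaranteeing that the unobstructedness hypothesis is substantive in applications, and in later sections of the paper where the hypothesis is verified for concrete $W = F_1(X)$.

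The main obstacle is the first step: the careful identification of ``unobstructed for $\mathrm{ev}_2$'' with the cohomological surjectivity above. One must account for the $\mathbb{G}_m$-stabilizer of $(\PP^1, p', q')$ so that the tangent space to $\mathrm{ev}_1^{-1}(\psi(p'))$ is correctly computed as a quotient of $H^0(\psi^*T_W(-p'))$ rather than as the full space, and verify that the differential of $\mathrm{ev}_2$ descends to the genuine evaluation map at $q'$ in this quotient. Once this identification is pinned down, the remaining steps --- applying the splitting principle on $\PP^1$ and checking the open condition of avoiding the singular locus --- are routine. A subsidiary technical matter is the generalness of $\phi'$ in $M_2$, used only implicitly to guarantee that Hypotheses A and B persist to $\psi$'s in the relevant family by openness.
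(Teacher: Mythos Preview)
Your argument rests on two claims that both fail.

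First, you misread the hypothesis. ``Unobstructed for the second evaluation morphism'' means that $(\psi,p',q')$ is a smooth point of the fiber $ev_2^{-1}(z)$, i.e.\ that $H^1(\PP^1,\psi^*T_W(-q'))=0$. This is how the paper uses it: in the proof of Theorem~\ref{thm_rel_vf} it invokes Hypothesis~C precisely to say ``$S$ is smooth'' for $S\subset ev_2^{-1}(z)$, and Remark~\ref{rmk_weaken_hypothesisC} notes the condition is \emph{automatic} for maps from an irreducible $\PP^1$ through a general point (since such maps are free). Your reading---surjectivity of $H^0(\psi^*T_W(-p'))\to T_{W,\psi(q')}$, hence $\psi^*T_W$ ample---is not what is being assumed, and indeed would make the theorem nearly vacuous (as you yourself note, your argument never touches the Picard-number-$1$ hypothesis, which the paper uses essentially via Proposition~\ref{prop_contract_curve_ample}). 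Under the correct reading one only obtains global generation of $\psi^*T_W$, which is already known for general $\psi$ and gives nothing new.

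Second, even granting your reading, the deduction that $\psi^*T_f$ is ample is wrong. Short exact sequences of vector bundles on $\PP^1$ do \emph{not} automatically split: the twisted Euler sequence $0\to\mc{O}\to\mc{O}(1)^{\oplus 2}\to\mc{O}(2)\to 0$ is nonsplit, and exhibits $\mc{O}$ as a (non-ample) subbundle of the ample bundle $\mc{O}(1)^{\oplus 2}$. So from ampleness of $\psi^*T_W$ you cannot conclude ampleness of the subbundle $\psi^*T_f$ (only of the quotient $\psi^*f^*T_X$).

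The paper's actual proof is of a completely different nature: it builds an integrable foliation $\mc{D}\subset T_f$ out of the positive parts of the relative tangent bundle along curves in the components $M_j$, passes to an algebraic quotient $h:U\to Y$ via \cite{KST_foliations}, and then uses the Picard-number-$1$ hypothesis on fibers of $f$ together with a dimension count (where Hypothesis~C enters to control fiber dimensions of the map $\Psi$) to force a complete curve in a fiber of $h_x$, which collapses the foliation and yields $m_{max}=m-n$. None of this machinery is bypassed by the cohomological shortcut you propose.
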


The map $\phi$ as in Setup~\ref{setup_main} satisfies $\phi^*T_f = \mc{E} \oplus \OO^{a}$ for some number $a \geq 0$ and some ample vector bundle $\mc{E}$ on $\PP^1$.  Informally, we will show that by gluing together curves in $M$ and deforming them to a map $\phi'$, the rank of the ample subbundle of $\phi'^*T_f$ will increase.  The original map $\phi$ defines a sequence of components $M_i$ of maps, and by considering this sequence we will produce a naturally defined, rational foliation $W \dashrightarrow Y$ over $X$ such that relatively free curves are contracted.  For a more precise statement, see Section~\ref{section_foliation} and Theorem~\ref{thm-foliation}.  In Section~\ref{section_geometry} we will show that under the hypotheses, there are enough curves contracted by the foliation to conclude that the relatively free curves ``point in all directions", that $Y = X$, so that there is a relatively very free curve.  (See Theorem~\ref{thm_rel_vf}).

The main application of this result will be to smooth complete intersections in projective spaces, see Section~\ref{section_ci}.  To establish the existence of $1$-twisting surfaces, we will use a theorem of de Jong and Starr, see \cite{dJS}.  We will then be able to verify the existence of $2$-twisting surfaces in the following degree ranges.  See Corollary~\ref{cor_results} for a stronger statement than what follows.

\begin{thm}
Let $X \subset \PP^n$ be a smooth complete intersection of type $(d_1, \ldots, d_c)$ such that $\sum d_i^2 \leq n$ and such that the Fano variety of lines on $X$ is smooth.  Then $X$ contains a $2$-twisting surface.
\end{thm}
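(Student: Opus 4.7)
The plan is to apply the main theorem (the unnumbered Theorem in the introduction, i.e.\ Theorem~\ref{thm_rel_vf}) with $W = F_1(X)$, the Fano scheme of pointed lines on $X$, and $f:W\to X$ the evaluation morphism, and then translate the resulting relatively very free curve on $W$ into a $2$-twisting surface on $X$ via the correspondence already flagged in the introduction. So the work splits into three tasks: (i) verify Setup~\ref{setup_main}, (ii) produce a map $\phi:\PP^1\to W$ satisfying Hypotheses A and B, and (iii) verify the unobstructedness hypothesis of Theorem~\ref{thm_rel_vf}.

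For (i), the smoothness of $W=F_1(X)$ is assumed, $X$ is smooth by assumption, and $f$ is proper since $F_1(X)$ is projective. Surjectivity of $f$ (i.e.\ that $X$ is covered by lines) follows from standard incidence/dimension estimates under $\sum d_i^2\le n$ since $\sum d_i \leq n$. The fiber $f^{-1}(x)$ is the Fano scheme of lines on $X$ through a fixed point $x$, which is cut out inside $\PP^{n-1}$ as a complete intersection of type $(1,2,\ldots,d_i)_{i=1,\ldots,c}$; under the numerical condition $\sum d_i^2\le n$ the fiber has dimension $\ge 3$, so Lefschetz gives $\operatorname{Pic}=\ZZ$. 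This verifies the Picard number hypothesis of Setup~\ref{setup_main}.

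For (ii), I would invoke the theorem of de Jong--Starr \cite{dJS} cited in the text: the hypothesis $\sum d_i^2 \leq n$ implies the existence of a $1$-twisting surface on $X$. Under the dictionary between twisting surfaces on $X$ and relatively free curves on $F_1(X)\to X$, this produces a map $\phi:\PP^1\to W$ with $\phi^*f^*T_X$ globally generated (Hypothesis A) and $\phi^*T_f$ locally free, globally generated, and of positive degree (Hypothesis B). The correspondence was recalled in the introduction and is made precise in Section~\ref{section_twisting}/Appendix~\ref{app-twisting-surfaces}, so this step is essentially a translation.

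The main obstacle is (iii): verifying the unobstructedness of general two-pointed maps in $M_2$ with respect to the second evaluation, where the second marked point is constrained to lie in a fiber $f^{-1}(y)$. Concretely, for a general $\phi'\in M_2$ and any competing $(\psi,p',q')\in M_2$ with $\psi(p')=\phi(p)$ and $\psi(q')$ varying in $f^{-1}f(\phi(q))$, I must check that the differential of the second evaluation morphism from $M_2$ (with the first marked point fixed at $\phi(p)$) to $W$ is surjective at $\psi$. I would approach this by a direct cohomological computation: writing $\psi^*T_W$ as an extension of $\psi^*f^*T_X$ by $\psi^*T_f$ and using Hypothesis A together with the positivity of $\psi^*T_f$ (which propagates along the chain of components $M_i$ built in the proof of Theorem~\ref{thm_rel_vf}), the relevant $H^1$ on $\PP^1$ vanishes, since $\psi^*T_W$ will be shown to be globally generated and its restriction to $\psi^*T_f$ sufficiently positive on account of the degree contributions coming from the Fano-scheme-of-lines geometry under $\sum d_i^2 \leq n$. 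Granting this unobstructedness, Theorem~\ref{thm_rel_vf} produces a relatively very free morphism $\PP^1\to W$ over $X$, and applying the dictionary in reverse yields a $2$-twisting surface on $X$, which is the desired conclusion. The stronger statement of Corollary~\ref{cor_results} would be obtained by tracking the same construction under slightly weaker numerical hypotheses.
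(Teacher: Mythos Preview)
Your outline through step~(ii) matches the paper's strategy: take $W=\mc{C}$ (pointed lines), $f=ev$, invoke the de Jong--Starr $1$-twisting surface to get Hypotheses~A and~B, and translate relatively very free curves back to $2$-twisting surfaces. (One small slip: the fiber $f^{-1}(x)$ need not have dimension $\geq 3$---for a cubic in $\PP^9$ it is a surface---so Lefschetz alone does not give Picard number~$1$; the paper uses Theorem~\ref{thm_numerical_lines}(iv) instead, which only needs $3+c\le n$.)

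The real gap is in~(iii). Hypothesis~C asks for unobstructedness of \emph{every} $(\psi,p',q')$ in $M_{\phi,p,q}$ hitting \emph{any} prescribed point $z$ in the fiber over $f(\phi(q))$, not just general ones. As the paper notes (Remark~\ref{rmk_weaken_hypothesisC}), the case where $\psi$ has irreducible domain is automatic because $\psi$ passes through a general point of $W$; the content is entirely in the boundary, where $\psi$ is a stable map from a \emph{reducible} curve. Your proposed argument---``positivity of $\psi^*T_f$ propagates along the chain of components $M_i$, so $\psi^*T_W$ is globally generated and the relevant $H^1$ vanishes''---does not apply here: the propagation in Section~\ref{section_foliation} is for a \emph{general} member of each $M_j$, and a reducible $\psi$ constrained to pass through $\phi(p)$ and land at a specified $z$ is not general in any such sense. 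There is no reason for $\psi^*T_W(-q')$ to have vanishing $H^1$ from abstract positivity alone.

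What the paper actually does is specific to the low class $\beta$ coming from a quadric surface: a point of $M_{\phi,p,q}$ corresponds to a degree-$2$ surface in $X$ containing the two general lines $\sigma$ and $l_1$ through $x_1$, and such a surface is either a smooth quadric, a quadric cone, two $2$-planes meeting in a line, or a double $2$-plane. The last two are \emph{ruled out} by a dimension count on chains of lines in $F(X,x_1)$ (this is exactly where the inequality $n\ge\sum d_i^2$ enters), so they never occur in $M_{\phi,p,q}$. For the quadric cone, $\psi$ is a two-component map $B_1\cup B_2\to\mc{C}$ with $B_1$ free in a fiber of $ev$ and $B_2$ a fiber of $\pi$, and one checks $H^1(B,\psi^*T_{\mc{C}}(-p))=0$ by hand via the Mayer--Vietoris sequence on the nodal curve. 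This geometric case analysis is the missing idea; without it, step~(iii) does not go through.
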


In Section~\ref{section_srsc}, we will outline the argument from \cite{dJS} to show that an $X$ which admits $1$ and $2$ twisting surfaces is strongly rationally simply connected.  Though this argument is not original work, it is difficult to point to the theorem in the paper cited, so we have found it useful to reproduce the outline of their method here.  This will imply that complete intersections in the above degree range are strongly rationally simply connected (see that section for a definition).  Finally we will show that these complete intersections satisfy the hypotheses of Corollary 1.1 of \cite{dJHS}.  Notice that the final remarks they make in Section 16 have been resolved by the work here - all of the twisting surfaces we produce are ruled by lines.  This is a long-winded way of saying that we can reprove Tsen's theorem, see \cite{Tsen-Lang}, which is certainly well known.  In the future, we hope to show that these methods apply in other situations to verify that other varieties are strongly rationally simply connected.

\begin{notat}
We freely use the notation developed in Koll\'ar's book \cite{Kollar-ratcurves}, especially Chapters II and IV concerning maps from $\PP^1$ to a variety.  For a coherent sheaf $\mc{F}$ on a variety $X$, we use $h^i(X, \mc{F})$ to denote the dimension of the cohomology group $H^i(X, \mc{F})$ as a complex vector space.  The canonical bundle on a smooth variety $X$ will be denoted by $K_X$.  Points of Kontsevich mapping spaces $\kspace{n}{X}{\beta}$ will be often referred to by $(\phi, p_1, \ldots, p_n)$ to denote a genus 0, stable map $\phi: C \rightarrow X$ with the $n$ marked points $p_i \in C_{smooth}$.
\end{notat}

\section*{Acknowledgements}
Showing that degree $3$ hypersurfaces in projective space contain $2$-twisting surfaces constituted a portion of the author's PhD. thesis.  He would like to acknowledge his advisor, Johan de Jong, for all of his help and encouragement.  The author would also like to thank Jason Starr and Carolina Aruajo for helpful conversations and comments during the preparation of this work.

\section{Preliminaries}\label{section_prelims}

In this section we collect some results and set up some notation which will be used during the course of the proof.  As every vector bundle on $\PP^1$ splits into the direct sum of line bundles, the goal of the next sections is to control this splitting type for a given vector bundle.  Let $E$ be a vector bundle on $\PP^1$.  Write $E = \oplus \OO(a_i)$ with $a_1 \leq a_2 \leq \cdots \leq a_n$.

\begin{defn}
With the notation as above, let $p$ be the smallest index such that $a_p > 0.$  We say that the \textit{positive part} of $E$ is the subbundle $Pos(E) = \oplus_{j \geq p} \OO(a_j) \subset E$.  We say that $E$ is \textit{almost balanced} if its splitting type satisfies $a_n - a_1 \leq 1$.
\end{defn}

\begin{lem}\label{lem_find_section}
Let $\pi: \mc{C} \rightarrow B$ be a flat family of genus $0$ curves over a one dimensional, irreducible, smooth, pointed base $0 \in B$.  Assume that the general fiber is a smooth rational curve and the central fiber $C_0$ is reducible with two components which meet at a single node.  Then, for $i = 1, 2$, there is a section of $\pi$, $\sigma_i$, such that $\sigma_i(B) \cdot C_i = 1$ and $\sigma_i(B) \cdot C_{3 - i} = 0$
\end{lem}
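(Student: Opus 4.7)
The plan is to produce each $\sigma_i$ by locating a smooth point of $\pi$ on the component $C_i$, away from the node, and then promoting it to a section via the smoothness of $\pi$ there.

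The first step is to observe that because $\pi$ is flat and the general fiber is smooth, $\pi$ is automatically smooth at every point of the central fiber $C_0$ where $C_0$ itself is smooth. Hence the only point of $C_0$ at which $\pi$ can fail to be smooth is the node $p = C_1 \cap C_2$; in particular $\pi$ is smooth at every point of $C_i \setminus \{p\}$ for $i = 1, 2$. Given this, I would choose any point $x_i \in C_i \setminus \{p\}$ (such points abound since each $C_i \cong \PP^1$). Since $\pi$ is smooth at $x_i$ and $B$ is a smooth pointed curve, the infinitesimal lifting criterion for smoothness applied to $0 \hookrightarrow B$ and the point $x_i \in \mc{C}$ produces a section $\sigma_i$ of $\pi$ with $\sigma_i(0) = x_i$, defined at least in a Zariski neighborhood of $0 \in B$; as the conclusion of the lemma is local on the base, this suffices.

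Finally, to verify the intersection numbers: because $\sigma_i$ is a section of $\pi$, one has $\sigma_i(B) \cdot C_0 = \deg \sigma_i^* \pi^*\OO_B(0) = 1$, and the image $\sigma_i(B)$ meets $C_0$ only at the single smooth point $x_i \in C_i$. Since $x_i \notin C_{3-i}$, the set-theoretic intersection $\sigma_i(B) \cap C_{3-i}$ is empty, so $\sigma_i(B) \cdot C_{3-i} = 0$, and the identity above then forces $\sigma_i(B) \cdot C_i = 1$.

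The only potential subtlety is that the total space $\mc{C}$ may be singular at the node $p$ (it is an $A_n$-singularity in the versal-deformation picture), which could complicate a direct intersection-theoretic calculation involving $C_1$ and $C_2$ as individual Weil divisors. However, by restricting attention to smooth points of $\pi$ throughout, the argument sidesteps $p$ entirely: the section $\sigma_i$ is constructed in the smooth locus and its intersection with each $C_j$ is computed at a point where $\mc{C}$ is smooth, so no resolution of singularities or special treatment near $p$ is required.
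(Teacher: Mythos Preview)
Your approach is correct and is more elementary than the paper's. The paper first contracts the component $C_{3-i}$ over $B$, obtaining a family whose central fiber is the single smooth rational curve $C_i$, and then invokes weak approximation at places of good reduction for rationally connected fibrations \cite{HassettTschinkel_WA_function_fields} to produce a section through a prescribed point of $C_i$. You bypass both the contraction and the external appeal to weak approximation by working directly with the smoothness of $\pi$ away from the node; this is a genuinely local argument and is all that is needed here.

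One phrasing to tighten: the infinitesimal lifting criterion, taken literally, only furnishes compatible lifts over Artinian thickenings of $0$, i.e.\ a formal section over $\mathrm{Spec}\,\widehat{\OO}_{B,0}$, not a section over a Zariski neighborhood. To get an honest algebraic section, argue as follows: since $\pi$ is smooth at $x_i$ and $B$ is smooth, the total space $\mc{C}$ is regular at $x_i$ and a uniformizer $t$ of $\OO_{B,0}$ pulls back to part of a regular system of parameters $(\pi^*t,s)$ at $x_i$; the locally closed curve $V(s)\subset\mc{C}$ then maps \'etale to $B$ at $x_i$, and after shrinking so that $x_i$ is its unique point over $0$ it becomes an open immersion, whose inverse is the desired section. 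With this adjustment your argument is complete, and your handling of the intersection numbers (including the remark that everything happens away from the possible singularity at the node) is correct.
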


\begin{proof}
After blowing down the component $C_{3 - i}$ over $B$, this is a special case of the fact that weak approximation is satisfied at places of good reduction for rationally connected fibrations.  See Theorem $3$ of \cite{HassettTschinkel_WA_function_fields}
\end{proof}

We will make use of the following deformation-theoretic results.

Suppose that $\phi \in \kspace{0}{W}{\beta}$ is a point corresponding to a stable map $\phi: C \rightarrow W$.   Set $\alpha = f_*\beta \in H^2(X, \ZZ)$.  Define the stack $\mc{S}$ by the following fiber product diagram.

$$
\xymatrix{
\mc{S} \ar[r] \ar[d] & \kspace{0}{W}{\beta} \ar[d] \\
[f \circ \phi] \ar[r] & \kspace{0}{X}{\alpha}
}
$$

A $T$ point of $\mc{S}$ corresponds to a family of genus $0$ curves $\pi: \mc{C} \rightarrow T$ and a stable map $g: \mc{C} \rightarrow W$ such that $f \circ g = f \circ \phi$ up to stabilization.  Notice that in the situation of Setup~\ref{setup_main}, Hypothesis A implies that the class $\alpha$ is not zero.

\begin{lem}\label{lem_relative_def}
The tangent space to $\mc{S}$ at $\phi$ is identified with the vector space $Hom_C(\phi^* \Omega_{W/X}, \OO_C)$.  For any deformation of $\phi$ over a local Artin ring $A$ and for any surjection $B \rightarrow A$ of local Artin rings with square zero kernel, there is an element $ob \in Ext^1_C(\phi^* \Omega_{W/X}, \OO_C)$ whose vanishing is necessary and sufficient to guarantee that the given deformation lifts to $B$.

If $C$ maps into the smooth locus of $f$ and $H^1(C, \phi^*T_f) = 0$, then $\mc{S}$ is smooth of dimension $h^0(C, \phi^*T_f) = h^0(C, \phi^*T_W) - h^0(C, \phi^*f^*T_X)$ at $\phi$.
\end{lem}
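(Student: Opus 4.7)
The plan is to deduce the statement from the standard deformation theory of the two Kontsevich stacks, together with the fact that $\mc{S}$ is the fiber of the morphism $F: \kspace{0}{W}{\beta} \to \kspace{0}{X}{\alpha}$ induced by composition with $f$ (and stabilization). At $\phi: C \to W$, the tangent space to $\kspace{0}{W}{\beta}$ is $\text{Ext}^1(L^\bullet_{W,\phi}, \OO_C)$ and obstructions lie in $\text{Ext}^2(L^\bullet_{W,\phi}, \OO_C)$, where $L^\bullet_{W,\phi}$ is the two-term complex $[\phi^*\Omega_W \to \Omega_C]$ concentrated in degrees $[-1,0]$; the analogous statements hold for $\kspace{0}{X}{\alpha}$ at $[f\circ\phi]$ with $L^\bullet_{X,\phi} = [\phi^*f^*\Omega_X \to \Omega_C]$.

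First I would identify the relative cotangent complex of $F$ at $\phi$. The differential $dF$ is induced by the obvious morphism of complexes $L^\bullet_{X,\phi} \to L^\bullet_{W,\phi}$ (the identity on $\Omega_C$ in degree $0$ and the pullback of $f^*\Omega_X \to \Omega_W$ in degree $-1$); its cofiber is quasi-isomorphic to $\phi^*\Omega_{W/X}[1]$, because where $\phi(C)$ meets the smooth locus of $f$ one has a short exact sequence $0 \to \phi^*f^*\Omega_X \to \phi^*\Omega_W \to \phi^*\Omega_{W/X} \to 0$. Applying $\text{RHom}(-, \OO_C)$ to the distinguished triangle $L^\bullet_{X,\phi} \to L^\bullet_{W,\phi} \to \phi^*\Omega_{W/X}[1]\to$, and using that the $\text{Ext}^0$ terms of both complexes vanish because the stable map $\phi$ has finite automorphism group, I extract the long exact sequence
\begin{align*}
0 \to \text{Hom}_C(\phi^*\Omega_{W/X}, \OO_C) &\to \text{Ext}^1(L^\bullet_{W,\phi}, \OO_C) \xrightarrow{dF} \text{Ext}^1(L^\bullet_{X,\phi}, \OO_C) \\
&\to \text{Ext}^1_C(\phi^*\Omega_{W/X}, \OO_C) \to \text{Ext}^2(L^\bullet_{W,\phi}, \OO_C).
\end{align*}
The kernel of $dF$ is the tangent space to the fiber $\mc{S}$ and equals $\text{Hom}_C(\phi^*\Omega_{W/X}, \OO_C)$; the standard relative obstruction formalism then gives $\text{Ext}^1_C(\phi^*\Omega_{W/X}, \OO_C)$ as the obstruction space controlling liftings $A \to B$ of $\mc{S}$-deformations.

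When $\phi(C)$ lies in the smooth locus of $f$, the sheaf $\phi^*\Omega_{W/X}$ is locally free on $C$ and dual to $\phi^*T_f$, so the above groups become $H^0(C, \phi^*T_f)$ and $H^1(C, \phi^*T_f)$ respectively. Vanishing of the latter kills all obstructions, so $\mc{S}$ is smooth at $\phi$ of dimension $h^0(C, \phi^*T_f)$. The equality $h^0(C, \phi^*T_f) = h^0(C, \phi^*T_W) - h^0(C, \phi^*f^*T_X)$ then follows from the long exact cohomology of $0 \to \phi^*T_f \to \phi^*T_W \to \phi^*f^*T_X \to 0$, since $H^1(C, \phi^*T_f) = 0$ forces $H^0(\phi^*T_W) \to H^0(\phi^*f^*T_X)$ to be surjective. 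The only delicate point is the identification of the cofiber in the first step in the generality of a possibly reducible genus-zero prestable source $C$, but this is routine once $\phi(C)$ avoids the singular locus of $f$.
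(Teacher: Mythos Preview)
Your argument is correct and is precisely the ``general deformation theory consideration'' that the paper invokes: the paper omits the proof entirely and simply cites Illusie's cotangent complex formalism (\cite{Illusie_complexe_cotangent} III.2.3.2), which is exactly what you unwind. One small point worth tightening: the identification of the cofiber with $\phi^*\Omega_{W/X}[1]$ (rather than $\phi^*L_{W/X}[1]$) uses that $0 \to f^*\Omega_X \to \Omega_W \to \Omega_{W/X} \to 0$ stays exact after pulling back along $\phi$, which is automatic once $\phi(C)$ lands in the smooth locus of $f$ but is not obviously true otherwise; since the second paragraph of the lemma imposes this hypothesis anyway, and the paper only ever applies the lemma in that regime, this is harmless.
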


We omit the proof and simply note that this follows from general deformation theory considerations.  See, for example, \cite{Illusie_complexe_cotangent} III.2.3.2, suitably interpreted.

We also have need to consider the similar pointed case.  Let $(p,\phi) \in \kspace{1}{W}{\beta}$ be a point corresponding to a pointed, stable map $\phi: C \rightarrow W$, $p \in C$.  Define $\mc{S}_1$ to be the fiber product of $[f \circ \phi, p]$ with $\kspace{0}{W}{\beta}$ over $\kspace{0}{X}{\alpha}$ exactly analogously to the above setup.  The following is an analog of the previous Lemma.  We again omit the proof.

\begin{lem}\label{lem_relative_def_fix_pt}
The tangent space to $\mc{S}_1$ at $(p, \phi)$ is identified with the vector space $Hom_C(\phi^*\Omega_{W/X}(p), \OO_C)$.  For any deformation of $(p,\phi)$ over a local Artin ring $A$ (subject to the above constraints) and for any surjection $B \rightarrow A$ of local Artin rings with square zero kernel, there is an element $ob \in Ext^1_C(\phi^* \Omega_{W/X}(p), \OO_C)$ whose vanishing is necessary and sufficient to guarantee that the given deformation lifts to $B$.

If $C$ maps into the smooth locus of $f$ and if $H^1(C, \phi^*T_f(-p)) = 0$, then $\mc{S}_1$ is smooth of dimension $h^0(C, \phi^*T_f(-p)) = h^0(C, \phi^*T_W(-p)) - h^0(C, \phi^*f^*T_X(-p))$ at $(p, \phi)$.
\end{lem}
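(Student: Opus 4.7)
The plan is to mirror the proof of Lemma~\ref{lem_relative_def}, now bookkeeping one extra constraint: both the marked point $p$ on the source and its image $\phi(p) \in W$ must be preserved. Concretely, a $T$-point of $\mc{S}_1$ consists of a family of stable maps $g: \mc{C}_T \to W$ together with a section $\sigma: T \to \mc{C}_T$ such that $(f \circ g, \sigma)$ is (up to stabilization) the constant pointed family $(f \circ \phi, p)$ and moreover $g \circ \sigma$ is the constant morphism with value $\phi(p)$; this is the content of the fiber-product description once one correctly tracks the marked section.

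For the tangent space I would argue as follows. As in Lemma~\ref{lem_relative_def}, first-order deformations of $\phi$ whose composition with $f$ is trivial correspond to sections of $\phi^*T_f$ on $C$. Fixing the marked section $\sigma$ imposes no further constraint on the source curve, while fixing the image $\phi(p) \in W$ forces the corresponding section of $\phi^*T_f$ to vanish at $p$. Hence the tangent space is $H^0(C, \phi^*T_f(-p))$, which I rewrite as $\text{Hom}_C(\phi^*\Omega_{W/X}, \OO_C(-p)) = \text{Hom}_C(\phi^*\Omega_{W/X}(p), \OO_C)$, using that $\phi^*\Omega_{W/X}$ is locally free wherever $\phi$ lands in the smooth locus of $f$ (Remark~\ref{rmk_general_case}).

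For the obstruction theory, a lift from $A$ to $B$ with square-zero kernel $I$ is obstructed by a class in $\text{Ext}^1_C(\phi^*\Omega_{W/X}(p), \OO_C \otimes I)$; this comes from Illusie's formalism (III.2.3.2) applied to the augmented pair ``map plus section with both $f \circ g$ and $g \circ \sigma$ fixed''. When $H^1(C, \phi^*T_f(-p)) = 0$, this group vanishes and $\mc{S}_1$ is smooth at $(p, \phi)$ of the claimed dimension. The dimension identity then follows by twisting the tangent sequence $0 \to T_f \to T_W \to f^*T_X \to 0$ by $\OO_C(-p)$, taking cohomology, and using $H^1(C, \phi^*T_f(-p)) = 0$ to conclude that $H^0(\phi^*T_W(-p)) \to H^0(\phi^*f^*T_X(-p))$ is surjective.

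The only genuinely technical point is verifying that Illusie's obstruction class specializes uniformly to give the $(p)$-twist, i.e., that the combined constraints ``$f \circ g$ fixed'' and ``$g \circ \sigma = \phi(p)$ fixed'' really produce the sheaf $\Omega_{W/X}(p)$ appearing in the Hom (and not some more complicated two-term complex). This is the one place where the pointed version meaningfully diverges from Lemma~\ref{lem_relative_def}, but once one unwinds the cotangent-complex computation for the pair (map, fixed evaluation), the rest of the argument is a formal mirror of the unpointed lemma.
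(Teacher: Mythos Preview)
Your proposal is correct and follows exactly the approach the paper indicates: the paper omits the proof entirely, simply writing ``We again omit the proof'' and pointing back to Lemma~\ref{lem_relative_def} and Illusie's formalism. Your sketch supplies precisely the missing details---identifying the extra constraint as the vanishing of sections of $\phi^*T_f$ at $p$, invoking Illusie III.2.3.2 for the obstruction class, and reading off the dimension identity from the long exact sequence of $0 \to T_f \to T_W \to f^*T_X \to 0$ twisted by $\OO_C(-p)$---so there is nothing to compare.
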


\section{The Foliation}\label{section_foliation}
The first results of this section apply to the following general situation.  To be sure, we do not yet assume that we are in the setting of Setup~\ref{setup_main}.

\begin{sit}\label{gen_situation}
Let $V$ be a smooth, irreducible, projective variety and let $\phi: \PP^1 \rightarrow V$ be a free rational curve with $\phi^*T_V$ of degree larger than $2$.  Suppose $E$ is a vector bundle of rank $r$ on $V$ such that $\phi^*E$ is globally generated and has positive degree.  Set $\beta = \phi_*[\PP^1]$ in $A_1(V)$.
\end{sit}

\begin{lem}\label{lem_unique_component}
In the setting of Situation~\ref{gen_situation}, the map $\phi$ corresponds to a smooth point of $\kspace{0}{V}{\beta}$.  In particular, the corresponding point $[\phi]$ is contained in a unique component $M_1$ of this space.  A general point of $M_1$ corresponds to a free map from an irreducible genus $0$ curve such that $E$ pulls back to a globally generated bundle of positive degree.
\end{lem}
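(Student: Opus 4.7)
My plan is to prove the lemma in two steps, using standard deformation theory of stable maps together with upper semicontinuity of cohomology.

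For smoothness at $[\phi]$, I would first observe that $(\PP^1, \phi)$ is a stable map: since $\phi^*T_V$ has degree greater than $2 > 0$, $\phi$ is non-constant, so $\beta \neq 0$ and the lone irreducible component of the domain is not contracted. The standard deformation theory of genus $0$ stable maps with no marked points identifies the tangent space and obstruction space to $\kspace{0}{V}{\beta}$ at $[\phi]$ with $H^0(\PP^1, \phi^*T_V)$ and $H^1(\PP^1, \phi^*T_V)$ respectively. Freeness of $\phi$ means $\phi^*T_V$ is globally generated on $\PP^1$, so in the splitting $\phi^*T_V = \bigoplus \OO(a_i)$ each $a_i \geq 0$, whence $H^1(\PP^1, \phi^*T_V) = 0$. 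Therefore $[\phi]$ is a smooth point of $\kspace{0}{V}{\beta}$, and in particular it lies on a unique irreducible component, which we name $M_1$.

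For the properties of a general point of $M_1$, I would invoke openness of each of the three conditions in turn. The locus in $\kspace{0}{V}{\beta}$ parameterizing stable maps from an irreducible domain is precisely the complement of the boundary divisor, hence open, and it contains $[\phi]$, so it is dense in $M_1$. Freeness of a map $\phi' : \PP^1 \to V$ is equivalent to the vanishing $h^1(\PP^1, \phi'^* T_V(-1)) = 0$ (since on $\PP^1$ this forces every summand $\OO(a_i)$ to satisfy $a_i \geq 0$), which is an open condition by semicontinuity and holds at $[\phi]$. The same argument applied to $E$ shows that $\phi'^* E$ is globally generated on an open subset of $M_1$, since $h^1(\PP^1, \phi'^* E(-1)) = 0$ is open and holds at $[\phi]$. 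Finally, positivity of $\deg \phi'^* E$ is automatic because this degree equals $\beta \cdot c_1(E)$, an intersection number depending only on the fixed class $\beta$.

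I do not anticipate a serious obstacle; the only point requiring care is confirming that $\phi$ qualifies as a stable map in $\kspace{0}{V}{\beta}$ so that the standard identification of obstructions is available, and this follows at once from $\phi$ being non-constant with irreducible domain and no marked points. Beyond this, the argument is a straightforward combination of smoothness-from-vanishing-$H^1$ and the fact that global generation and freeness are both open conditions detected by vanishing of $H^1$ of a twist.
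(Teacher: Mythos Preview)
Your proposal is correct and follows essentially the same approach as the paper: smoothness via vanishing of $H^1(\PP^1,\phi^*T_V)$ from freeness, then openness of global generation together with constancy of the degree in the family. You have simply spelled out in more detail why the map is stable and why global generation is open (via semicontinuity of $h^1$ of a twist by $\OO(-1)$), whereas the paper states these facts without further justification.
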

\begin{proof}
The obstruction space to lifting infinitesimal deformations of the map $\phi$ is identified with $H^1(\PP^1, \phi^*T_V)$.  By definition, this group is $0$ and the smoothness follows.  The second statement follows from the fact that being globally generated is an open condition in families of maps.  The degree of the bundle is constant in such families.
\end{proof}

We now define a sequence of components $M_i \subset \kspace{0}{V}{i\cdot\beta}$ indexed by $i \in \mathbb{N}$.  The component $M_1$ is the one whose existence is asserted in Lemma~\ref{lem_unique_component}.  There is a unique component $M_{1,1} \subset \kspace{1}{V}{\beta}$ dominating $M_1$.  As a general point $[\phi] \in M_1$ is a free curve on $V$, the evaluation map $ev: M_{1,1} \rightarrow V$ is surjective.

\begin{lem}\label{lem_fibers}
A general fiber of $ev: M_{1,1} \cap \kspacenb{1}{V}{\beta} \rightarrow V$ is smooth.  Such a fiber has dimension $h^0(\PP^1, \phi^*T_V(-p)) - h^0(\PP^1, T_{\PP^1}(-p))$.
\end{lem}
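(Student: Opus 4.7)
The plan is to work deformation-theoretically at a general point $(p,\phi) \in M_{1,1} \cap \kspacenb{1}{V}{\beta}$, which by Lemma~\ref{lem_unique_component} corresponds to a free rational map $\phi: \PP^1 \to V$ from an irreducible curve, with $\phi(p)$ a general point of $V$.

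First, I would observe that $\kspace{1}{V}{\beta}$ is smooth at $(p,\phi)$. Since $\phi$ is free, $H^1(\PP^1, \phi^*T_V) = 0$, and standard Kontsevich deformation theory then gives smoothness at $(p,\phi)$ of expected dimension $h^0(\PP^1, \phi^*T_V) + 1 - 3 = h^0(\PP^1, \phi^*T_V) - 2$.

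Second, I would show the differential of $ev$ at $(p,\phi)$ is surjective onto $T_{\phi(p)}V$. Using the presentation of $\kspace{1}{V}{\beta}$ as a stack quotient by $\mathrm{Aut}(\PP^1)$, a tangent vector at $(p,\phi)$ is represented by a pair $(s,t) \in H^0(\PP^1, \phi^*T_V) \oplus T_p\PP^1$ modulo the image of $H^0(\PP^1, T_{\PP^1})$ under $v \mapsto (-d\phi(v), v(p))$, and the differential of $ev$ sends $(s,t) \mapsto s(p) + d\phi(t)$. Global generation of $\phi^*T_V$ makes the evaluation $H^0(\phi^*T_V) \twoheadrightarrow (\phi^*T_V)_p$ surjective, and hence the differential of $ev$ is surjective as well. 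Combined with smoothness of the source, $ev$ is smooth at $(p,\phi)$, and generic smoothness in characteristic zero implies that the general fiber of $ev$ is smooth.

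For the dimension, I would identify the kernel of the differential of $ev$ at $(p,\phi)$. Since $H^0(\PP^1, T_{\PP^1}) \to T_p\PP^1$ (evaluation at $p$) is surjective, every tangent class admits a representative with $t = 0$, and the kernel is then identified with
\[
H^0(\PP^1, \phi^*T_V(-p)) \, / \, d\phi\bigl(H^0(\PP^1, T_{\PP^1}(-p))\bigr),
\]
where $d\phi : T_{\PP^1} \hookrightarrow \phi^*T_V$ is injective as a sheaf map (since $\phi$ is non-constant in characteristic zero), so the induced map on the twisted global sections is injective as well. Taking dimensions gives the claimed formula. The only delicate step is the bookkeeping of the $\mathrm{Aut}(\PP^1)$-quotient when passing from the scheme of morphisms to the Kontsevich space; this is precisely what accounts for the twists by $(-p)$ in both terms of the dimension formula.
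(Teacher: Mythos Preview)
Your proof is correct and follows essentially the same deformation-theoretic approach as the paper: the paper phrases it directly in terms of the obstruction and tangent spaces for pointed maps with fixed image point (namely $H^1(\PP^1,\phi^*T_V(-p))=0$ and tangent space $H^0(\PP^1,\phi^*T_V(-p))/H^0(\PP^1,T_{\PP^1}(-p))$), whereas you reach the same conclusion by showing the differential of $ev$ is surjective and identifying its kernel. The only cosmetic difference is that the paper observes every curve through a general $v\in V$ is free (hence every point of that fiber is smooth), while you invoke generic smoothness; both yield the stated lemma.
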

\begin{proof}
For a general point $v \in V$, let $(\phi: \PP^1 \rightarrow V, p \in \PP^1)$ be a pointed map such that $\phi(p) = v$. The map $\phi: \PP^1 \rightarrow V$ is a free curve.  The group $H^1(\PP^1, \phi^*T_V(-p))$ contains the obstructions to lifting infinitesimal deformations of the map $\phi$ which contain the point $p$.  This group is $0$ because $\phi$ is free and so $[\phi]$ is a smooth point of the fiber of the map $ev: M_{1,1} \rightarrow V$.  The dimension of the fiber at $\phi$ is then $h^0(\PP^1, \phi^*T_V(-p)) - h^0(\PP^1, T_{\PP^1}(-p))$.
\end{proof}

The existence of the component $M_1$ singles out a well-defined sequence of components.

\begin{constr}\label{construct_components}
Consider a nodal, arithmetic genus zero curve $C$ with components $C_i$ and a map $\phi: C \rightarrow V$.  If each component of $C$ is contracted by $\phi$ or else $\phi|_{C_i}:C_i \rightarrow V$ is a general point of $M_1$, then $\phi$ is a smooth point of $\kspace{0}{V}{k\cdot\beta}$ (here $k$ is the number of non-contracted components) and a general deformation of $\phi$ smooths all the nodes of $C$ (see \cite{Kollar-ratcurves} Theorem II.7.6).  Thus $\phi$ defines a unique irreducible component $M_k \subset \kspace{0}{V}{k \cdot \beta}$ such that $M_k \cap \kspacenb{0}{V}{k\cdot\beta}$ is not empty.
\end{constr}

This construction of the components $M_i$ is well defined in the following sense.

\begin{lem}\label{lem_well_defined_sequence}
Given two positive integers $a,b$, if $\phi_a: \PP^1 \rightarrow V$ is a general point of $M_a$ and $\phi_b: \PP^1 \rightarrow W$ is a general point of $M_b$ with $\phi_a(0) = \phi_b(0)$, then the unique map from $\phi_a \coprod \phi_b: \PP^1 \coprod_0 \PP^1 \rightarrow V$ is a smooth point of $M_{a+b}$.  Further, for any $j > 0$, a general point $[\phi_j] \in M_j$ is a free curve such that $\phi_j^*E$ is globally generated and has positive degree.
\end{lem}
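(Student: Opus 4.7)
The plan is to prove both assertions simultaneously by induction on $j$, where the inductive hypothesis at $j$ is that a general $[\phi_j]\in M_j$ is a free curve with $\phi_j^*E$ globally generated of positive degree. The base case $j=1$ is Lemma~\ref{lem_unique_component}. In the inductive step I would first establish the gluing claim for $a+b=j+1$ (with $a,b\leq j$), then deduce the ``Further'' assertion at $j+1$ from it.

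For the gluing claim, fix $a,b>0$ with $a+b=j+1$. By the inductive hypothesis, general $\phi_a\in M_a$ and $\phi_b\in M_b$ are free, so by Lemma~\ref{lem_fibers} the evaluation maps from the one-pointed spaces dominate $V$; thus one can arrange $\phi_a(0)=\phi_b(0)$ at a general point of $V$. The glued map $\phi:=\phi_a\coprod\phi_b:\PP^1\coprod_0\PP^1\to V$ has each component free and is therefore a smooth point of $\kspace{0}{V}{(a+b)\beta}$ whose general deformation smooths the node, by Koll\'ar's Theorem II.7.6 as cited in Construction~\ref{construct_components}. To identify the resulting component as $M_{a+b}$, I would use the irreducibility of $M_a$ and $M_b$ (part of the inductive hypothesis) to connect $\phi_a$ within $M_a$ to a nodal chain $\phi_a^{\mathrm{nod}}$ of $a$ general $M_1$-curves, and similarly for $\phi_b$, while holding the attaching point fixed. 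The combined degeneration $\phi_a^{\mathrm{nod}}\coprod\phi_b^{\mathrm{nod}}$ is a genus zero nodal configuration of $a+b$ general $M_1$-curves, which lies in $M_{a+b}$ by Construction~\ref{construct_components}; hence $\phi\in M_{a+b}$.

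For the ``Further'' assertion at $j+1$, take a general smoothing of $\phi=\phi_a\coprod\phi_b$ inside $M_{a+b}$ (the smoothings being dense in $M_{a+b}$ by Construction~\ref{construct_components}). On each $\PP^1$-component the bundles $\phi^*T_V$ and $\phi^*E$ are globally generated of positive degree by the inductive hypothesis, so from the normalization sequence
\[
0\to F\to F|_{C_1}\oplus F|_{C_2}\to F|_{\mathrm{node}}\to 0
\]
applied to $F=\phi^*T_V$ and $F=\phi^*E$, one checks $H^1(C,F)=0$ and $H^1(C,F\otimes I_q)=0$ for a general smooth point $q$, using that $F|_{C_i}(-q)$ has all summands of degree $\geq -1$ and that global generation of $F|_{C_j}$ gives surjectivity onto $F|_{\mathrm{node}}$. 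Upper semicontinuity of $R^1\pi_*$ applied to the smoothing family with a section marking $q$ (existing by Lemma~\ref{lem_find_section}) then gives $H^1(\PP^1_t,\phi_t^*F(-q_t))=0$ for $t$ near $0$, which means $\phi_t^*F$ has all Grothendieck summands non-negative, i.e. is globally generated. Positivity of degree is preserved since degrees are locally constant.

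The main obstacle I anticipate is not in the cohomology but in the component identification: gluing produces a smooth point of \emph{some} component of $\kspace{0}{V}{(a+b)\beta}$, and one must certify that this is the component $M_{a+b}$ singled out by Construction~\ref{construct_components} and not an a priori different one. The degeneration to chains of $M_1$-curves in the second paragraph is what closes this gap, and it depends essentially on the irreducibility of $M_a$ and $M_b$ being supplied by the inductive hypothesis.
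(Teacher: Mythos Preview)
Your proposal is correct and follows essentially the same strategy as the paper. Both arguments identify the component as $M_{a+b}$ by degenerating $\phi_a$ and $\phi_b$ to chains of general $M_1$-curves (this is exactly what the paper does, invoking Construction~\ref{construct_components}), and both deduce the global generation on the general smoothing by checking it on the nodal central fiber and invoking openness/semicontinuity.

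The only differences are organizational. The paper does not set up an induction: for the second assertion it goes directly to a chain of $j$ general $M_1$-curves as the central fiber and simply states that ``being globally generated on fibers is an open condition on the base $B$'' (the implicit $H^1$ vanishing is immediate since each component pulls $E$ and $T_V$ back to a non-negative bundle). Your inductive two-component gluing and explicit normalization-sequence check of $H^1(C,F(-q))=0$ are more detailed but amount to the same thing; the induction is unnecessary because the $j$-chain already lies in $M_j$ by definition, but it is not wrong.
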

\begin{proof}
The first statement follows from the fact that a general point of $M_a$ can, by definition, be degenerated to a map from a nodal curve $\phi': C \rightarrow V$ where $\phi'$ restricted to each of the $a$ components of $C$ is a free map contained in $M_1$.  The smoothness of the point $\phi_a \coprod \phi_b$ is straightforward.

The second statement follows by hypothesis for $j = 1$.  To verify the general case, consider a general smoothing of a map $\phi: C \rightarrow V$ with each irreducible component of $C$ mapping to $V$ via a general point of $M_1$ as above.  Such a smoothing looks like a family over a pointed base $\pi: \mc{C} \rightarrow (0 \in B)$ with a map $\Phi: \mc{C} \rightarrow V$ such that $\Phi$ restricts to $\phi$ on $\mc{C}_0$ and on a general fiber $\Phi_b: \mc{C}_b \rightarrow V$ is a general point of $M_j$.  The result follows because $\Phi^*E$ and $\Phi^*T_V$ are globally generated on the special fiber of $\pi$ and being globally generated on fibers is an open condition on the base $B$.  The degree of $\Phi^*E$ is constant on fibers.
\end{proof}

For each $[\phi] \in M_j$ which is a map from an irreducible curve to $V$, we may consider the splitting type of the bundle $\phi^*E$.  That is, we may write $\phi^*E$ as the direct sum of line bundles $\bigoplus \OO(a_i)$ with $a_1 \leq \ldots \leq a_r$.  On an open set in $M_j$, this splitting type is constant (again by the semi-continuity theorem) and we have $0 \leq a_1$ by Lemma~\ref{lem_well_defined_sequence}.

\begin{defn}\label{def_pos_rk}
For a $\phi \in M_j$ as above, we define $\it{Pos(\phi)}$ to be the largest rank ample sub-bundle of $\phi^*E$. Define $m_\phi$ to be the rank of $Pos(\phi)$.
\end{defn}

The following follows directly from the preceding Lemma and the above discussion.

\begin{lem}\label{lem_max_dim}
The integer $m_\phi$ is constant over an open set of $M_j$, denote this number by $m_j$; it satisfies $1 \leq m_j \leq r$.  The maximum is achieved if and only if $\phi^*E$ is ample.
\end{lem}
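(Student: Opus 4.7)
The plan is to decompose the assertion into three parts: constancy of $m_\phi$ on an open subset, the bounds $1 \le m_j \le r$, and the characterization of the maximum. Each part is a routine consequence of standard facts about vector bundles on $\PP^1$ combined with the results already established, so I do not expect any serious obstacle; the work is just assembling the pieces.

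First I would treat constancy. The splitting type of a vector bundle $\mc{F}$ on $\PP^1$ is determined by the integers $h^0(\PP^1, \mc{F}(-k))$ as $k$ varies, and each of these is upper semicontinuous in flat families. Restricting to the universal family over $M_j$ (more precisely, over the open locus parametrizing maps from irreducible curves), the splitting type of $\phi^*E$ is therefore upper semicontinuous with respect to the partial order that refines ``more unbalanced.'' Since $M_j$ is irreducible, there is a unique generic splitting type, and the locus in $M_j$ where the splitting type equals the generic one is open and dense. On this open set, $m_\phi$ (which is the number of strictly positive $a_i$) takes a constant value, which I would call $m_j$.

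Next, for the bounds: $m_j \le r$ is immediate from the definition since $Pos(\phi) \subset \phi^*E$ and $\phi^*E$ has rank $r$. For $m_j \ge 1$, I would appeal directly to Lemma~\ref{lem_well_defined_sequence}: a general point $[\phi] \in M_j$ is a free curve such that $\phi^*E$ is globally generated and has positive degree. Global generation forces $a_1 \ge 0$, and positive total degree forces at least one $a_i > 0$, so $m_\phi \ge 1$ for a general $\phi$, and hence $m_j \ge 1$.

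Finally, for the characterization of the maximum, the key observation is that on $\PP^1$ a direct sum $\bigoplus \OO(a_i)$ is ample if and only if every $a_i \ge 1$, i.e., every summand is itself ample. Thus $m_\phi = r$ is equivalent to all $a_i > 0$, which is equivalent to $\phi^*E$ being ample. This gives both directions of the ``if and only if'' and completes the proof. The only point to be slightly careful about is that the characterization is stated for an individual $\phi$ with $m_\phi = m_j$, but since ampleness is itself an open condition in flat families of vector bundles on $\PP^1$, the equivalence extends consistently over the open set of $M_j$ on which $m_\phi$ is constant.
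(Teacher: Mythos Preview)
Your proposal is correct and follows essentially the same approach as the paper. The paper does not give a separate proof of this lemma at all; it simply remarks that the statement ``follows directly from the preceding Lemma and the above discussion,'' where the discussion already notes that the splitting type is constant on an open set by semicontinuity and the preceding Lemma~\ref{lem_well_defined_sequence} supplies global generation and positive degree. Your write-up is just a careful unpacking of exactly those two ingredients together with the standard characterization of ampleness on $\PP^1$, so there is nothing to add or correct.
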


Denote the maximum value $m_{max} = \max_j\{m_j\}$, and the smallest $j$ which realizes this maximum by $j_{max}$.  We wish to find conditions guaranteeing that $m_{max} = r$.  If we can find two maps agreeing at a point whose positive pieces point in different directions at the point of intersection, then upon smoothing, the positive rank of the resulting map must increase.  This is made precise by the following.

\begin{prop}\label{prop-pos-subspace}
Let $\phi_1 \in M_{r_1}$, $\phi_2 \in M_{r_2}$ be maps from irreducible curves such that $\phi_1(0) = \phi_2(0)$.  Let $m' = \max(m_{\phi_1}, m_{\phi_2})$.  If $$\Span(Pos(\phi_1)|_0, Pos(\phi_2)|_0) \subset E_{\phi_i(0)}$$ has dimension greater than $m'$, then for a general deformation $\phi$ of $\phi_1 \cup \phi_2$, $m_\phi > m'$
\end{prop}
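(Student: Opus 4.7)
My plan is to build a rank-$s$ subbundle of $(\phi_1 \cup \phi_2)^*E$ on the nodal curve $C = C_1 \cup_0 C_2$ whose fiber at the node equals $W := \Span(Pos(\phi_1)|_0, Pos(\phi_2)|_0)$, and then to deform it to a rank-$s$ ample subbundle of $\phi^*E$ on the general smoothing. By Lemma~\ref{lem_well_defined_sequence}, the nodal map $\phi_1 \cup \phi_2$ is a smooth point of $M_{r_1+r_2}$, so it smooths through a one-parameter family $\Phi : \mc{C} \to V$ over $(0 \in B)$ to a general $\phi \in M_{r_1+r_2}$ with $\phi^*E$ globally generated of positive degree.

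For the construction, set $s = \dim W$ and $P_i := Pos(\phi_i)$, of rank $m_{\phi_i}$. Since $\phi_i^*E$ is globally generated, so is the quotient $\phi_i^*E/P_i$; I can therefore pick $s - m_{\phi_i}$ sections of $\phi_i^*E/P_i$ whose values at $0$ project to a basis of $W/P_i|_0$, and these span a trivial rank-$(s - m_{\phi_i})$ subbundle of $\phi_i^*E/P_i$. Its preimage $F_i \subset \phi_i^*E$ fits in an extension $0 \to P_i \to F_i \to \OO^{s-m_{\phi_i}} \to 0$, which splits because $H^1(\PP^1, P_i) = 0$ by ampleness of $P_i$. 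Thus $F_i \cong P_i \oplus \OO^{s-m_{\phi_i}}$ with $F_i|_0 = W$. Gluing $F_1$ and $F_2$ along their common fiber $W$ at the node yields a rank-$s$ subbundle $F \subset (\phi_1 \cup \phi_2)^*E$ on $C$; a flat extension then produces $\mc{F} \subset \Phi^*E$ on $\mc{C}$, locally free of rank $s$ on the general smoothed fiber.

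The crux is the vanishing $H^1(C, F(-p_0 - q_0)) = 0$, where $p_0 \in C_1$ and $q_0 \in C_2$ are smooth points chosen to extend to sections $\sigma, \tau : B \to \mc{C}$ by Lemma~\ref{lem_find_section}. Mayer--Vietoris for $F(-p_0 - q_0)$ on $C$ reduces this to surjectivity of the evaluation $H^0(F|_{C_1}(-p_0)) \oplus H^0(F|_{C_2}(-q_0)) \to F|_0 = W$ together with $H^1(F|_{C_i}(-p_i)) = 0$. The latter holds because $F|_{C_i}(-p_i) = P_i(-p_i) \oplus \OO(-1)^{s-m_{\phi_i}}$, and both summand types have vanishing $H^1$ on $\PP^1$. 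For surjectivity, the $\OO(-1)$ summands contribute no sections, whereas $P_i(-p_i)$ is globally generated (each of its line bundle summands has non-negative degree), so its evaluation $H^0(P_i(-p_i)) \to P_i|_0$ is onto. Thus the image contains $P_1|_0 + P_2|_0 = W$, giving surjectivity and the vanishing. Semi-continuity then yields $H^1(\mc{F}|_{\mc{C}_b}(-\sigma(b) - \tau(b))) = 0$ for general $b$, which on $\PP^1$ is equivalent to each summand of $\mc{F}|_{\mc{C}_b}$ having degree $\geq 1$, hence $\mc{F}|_{\mc{C}_b}$ is ample and $m_\phi \geq s > m'$.

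The main obstacle I expect is ensuring that the constructed $F$ makes the Mayer--Vietoris evaluation map surject onto all of $W$; the design choice of taking the trivial summands to lift a basis of $W/P_i|_0$ at the node is exactly what allows both $P_1|_0$ and $P_2|_0$ to appear in the image. A secondary concern is that the flat limit $\mc{F}$ really be locally free of rank $s$ on the general fiber, which follows from flatness together with smoothness of the general fiber of $\mc{C}/B$.
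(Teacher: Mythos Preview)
Your argument has a genuine gap at the step ``a flat extension then produces $\mc{F} \subset \Phi^*E$ on $\mc{C}$.'' Extending a subbundle from the special fiber $C_0$ to the total space of the smoothing is a deformation problem for the corresponding point of the relative Quot scheme (equivalently, for the section of $\mathrm{Gr}(s,\Phi^*E)\to\mc{C}$), and its obstruction lives in $H^1\bigl(C_0,\mc{H}om(F,\Phi^*E|_{C_0}/F)\bigr)$. On $C_i$ this sheaf is $(P_i^\vee)^{\oplus(r-s)}\oplus\OO^{(s-m_{\phi_i})(r-s)}$, and $H^1(P_i^\vee)\neq 0$ as soon as some summand of $P_i$ has degree $\geq 2$. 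So the obstruction space is typically nonzero, and you have not shown the actual obstruction class vanishes; the existence of $\mc{F}$ is unjustified. Your closing remark about the ``secondary concern'' (flatness and local freeness of $\mc{F}$) presupposes that $\mc{F}$ exists, which is exactly what is in doubt.

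The paper sidesteps this entirely by never constructing a subbundle: it applies semicontinuity directly to the sheaf $\Phi^*E(-D_1-D_2)$, which is already defined on the whole family. On the central fiber the same Mayer--Vietoris sequence you wrote down, but now for all of $\phi_i^*E(-p_i)$ rather than for $F_i(-p_i)$, shows that the evaluation map $H^0(\phi_1^*E(-p_1))\oplus H^0(\phi_2^*E(-p_2))\to E|_0$ has image exactly $Pos(\phi_1)|_0+Pos(\phi_2)|_0=W$ (only the positive summands survive the twist), hence $h^1(C_0,\Phi^*E(-D_1-D_2)|_{C_0})=r-s<r-m'$. Semicontinuity then gives $h^1(\PP^1,\Phi_b^*E(-2))<r-m'$, and since $\Phi_b^*E$ is globally generated this $h^1$ equals the number of trivial summands, i.e.\ $r-m_\phi$. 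Your correct computation $H^1(C_0,F(-p_0-q_0))=0$ can be fed into the exact sequence $0\to F\to\Phi^*E|_{C_0}\to \OO^{r-s}\to 0$ to recover the same bound, but at that point you have reproduced the paper's argument and the subbundle $F$ is no longer needed.
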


\begin{proof}
Denote the domain of $\phi_i$ by $C_i \cong \PP^1$.  Let $\pi: C \rightarrow B$ be a general deformation of $\phi_1 \cup \phi_2$.  This will be a diagram of the form
$$
\xymatrix{
C_1 \cup C_2 \ar[d] \ar[r] & C \ar[r]^\Phi \ar[d]^{\pi} & V \\
0 \ar[r] & B
}.
$$
We may assume that $\pi$ has two disjoint sections $D_1$ and $D_2$ meeting the central fiber only at a single point of $C_1$ and $C_2$ respectively; see Lemma~\ref{lem_find_section}.  Denote the restriction of $\Phi^*E(-D_1 - D_2)$ to $C$, respectively to $C_1$, $C_2$, by $F$, respectively by $F_1$, $F_2$.  Let $i_1: C_1 \rightarrow C_0$ and $i_2: C_2 \rightarrow C_0$ be the inclusion maps.  The short exact sequence
$$
0 \rightarrow F \rightarrow i_{1*}F_1 \oplus i_{2*}F_2 \rightarrow F|_0 \rightarrow 0
$$
gives rise to a piece of the long exact sequence in cohomology
$$
\xymatrix{H^0(C, F) \ar[r] & H^0(C_1, F_1) \oplus H^0(C_2, F_2) \ar[r]^-{a} & E|_0 \ar[r] & H^1(C, F) \ar[r] & 0}.
$$
Here the last $0$ follows because for each $\phi_i \in M_{r_i}$ the corresponding bundle $\phi_i^*E$ is semi-positive.   By assumption, the map $a$ has rank greater than $m'$ and so $h^1(C, F) < r - m'$.  By the semicontinuity theorem (\cite{Hartshorne-book} III.12.8), for a general $b \in B$, $h^1(C_b, \Phi^*E(-D_1 - D_2)|_{C_b}) < r - m'$ as well.  But on a general fiber, $\Phi^*E(-D_1 - D_2)|_{C_b} \cong \Phi_b^*E(-2)$ and moreover $E|_{C_b}$ is globally generated.  In other words, $\Phi_b^*E \cong \oplus \OO(a_i)$ with each $a_i \geq 0$ and so the number of $i$'s with $a_i > 0$ must be greater than $m'$, as claimed.
\end{proof}

We now define a subsheaf of $E$ which ``encodes" these positive subspaces.  For a general point $p \in V$, define $$\mc{D}(p) = \Span_{\phi \in M_j, \phi(0) = p} (Pos(\phi)|_0).$$  This definition makes sense on an open set $U$ of $V$ because a curve passing through a general point of $V$ is free and on a (possibly smaller) open set $U'$, the curves $\phi$ passing through $p \in U'$ will satisfy that $Pos(\phi)|_p$ has generic rank $m_j$.  The reader may be concerned that it seems that we must take the complement of countably many closed sets, but this is not the case because the maximum positive rank $m_{max}$ will be realized at some finite value of $j$.  Thus the above span need only take the values of $j$ less than or equal to $j_{max}$.

\begin{prop}\label{prop_glue_bundle}
There is a subsheaf $\mc{D}$ of $E$ and an open set $U \subset V$ such that for $p \in U$, $\mc{D}|_p = D(p)$.  In fact, $\mc{D}$ restricts to a vector bundle on some open set of $V$ whose complement has codimension at least $2$.
\end{prop}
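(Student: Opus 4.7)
The plan is to construct $\mc{D}$ locally at a general point, descend to a Zariski open subset of $V$, and then extend across codimension one using the DVR property so that saturation delivers the codimension-$2$ regularity.

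Fix a general $p_0 \in V$ in the locus where each $M_j$ with $j \leq j_{max}$ is represented by a free curve through $p_0$ with the generic splitting of $\phi^*E$. Since $\mc{D}(p_0) \subset E_{p_0}$ is a finite-dimensional subspace and only $j \leq j_{max}$ contribute to the span, one may choose finitely many pointed maps $(\phi_k, q_k) \in M_{j_k, 1}$, $k = 1, \ldots, N$, with $\phi_k(q_k) = p_0$ satisfying $\mc{D}(p_0) = \sum_k Pos(\phi_k)|_{q_k}$. Each $\phi_k$ is free, so by Lemma~\ref{lem_fibers} the evaluation $ev: M_{j_k, 1} \to V$ is smooth at $(\phi_k, q_k)$ and admits an \'etale-local section through $p_0$. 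Passing to a common \'etale neighborhood $U \to V$ of $p_0$, one obtains lifts $\tilde\phi_k: U \to M_{j_k, 1}$, and pulling back the universal pointed map together with its positive subbundle and evaluating along the marked section produces subbundles $\mc{P}_k \subset E|_U$ of rank $m_{j_k}$ (after shrinking $U$).

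Set $\mc{D}_U := \sum_k \mc{P}_k \subset E|_U$. By construction $\mc{D}_U|_{\tilde p_0} = \mc{D}(p_0)$, and semicontinuity of fiberwise rank produces a Zariski-open subset of $U$ on which $\mc{D}_U$ is a subbundle of constant rank $\dim \mc{D}(p_0)$. A parallel semicontinuity argument, noting that $\dim \mc{D}(p) = \operatorname{rk} E - \dim H^0\bigl(ev^{-1}(p), (ev^*E/Pos)^\vee\bigr)$, shows $p \mapsto \dim \mc{D}(p)$ is lower semicontinuous, so $\dim \mc{D}(p) = \dim \mc{D}(p_0)$ throughout an open neighborhood of $p_0$. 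Since $\mc{D}_U|_p \subset \mc{D}(p)$ always holds, these subspaces coincide on that neighborhood. Because $\mc{D}_U$ is then characterized fiberwise by the intrinsic condition $\mc{D}_U|_p = \mc{D}(p)$, it is invariant under the two projections $U \times_V U \rightrightarrows U$, and faithfully flat descent of subbundles yields a subbundle $\mc{D}_0 \subset E$ on a Zariski open $V^\circ \subset V$ with $\mc{D}_0|_p = \mc{D}(p)$ for all $p \in V^\circ$.

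To obtain a subsheaf of $E$ on all of $V$, first replace $\mc{D}_0$ by its saturation inside $E|_{V^\circ}$ (after which $E/\mc{D}_0$ is torsion-free and $\mc{D}_0$ is locally free away from a closed subset of codimension $\geq 2$ in $V^\circ$), and then define $\mc{D} \subset E$ on $V$ by
$$
\mc{D}(U') \;:=\; \bigl\{\, s \in E(U') : s|_{U' \cap V^\circ} \in \mc{D}_0(U' \cap V^\circ) \,\bigr\}
$$
for each open $U' \subset V$. At any codimension-one point $\eta \in V \setminus V^\circ$, $\OO_{V,\eta}$ is a DVR, so $\mc{D}_\eta$ — a submodule of the free module $E_\eta$ whose quotient is torsion-free by saturation — is itself free of rank $\dim \mc{D}(p_0)$. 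Combined with local freeness away from a codimension-$\geq 2$ locus in $V^\circ$, this shows $\mc{D}$ is a vector bundle outside a closed subset of codimension at least $2$ in $V$.

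The main obstacle I anticipate is the descent step: verifying that $\mc{D}_U$ is canonical enough to descend from the \'etale cover $U$ to the Zariski open $V^\circ$. This reduces to the intrinsic fiberwise identification $\mc{D}_U|_p = \mc{D}(p)$, which is manifestly invariant under the two projections of $U \times_V U$ and therefore supplies the needed descent datum; the semicontinuity input guaranteeing constancy of $\dim \mc{D}(p)$ on an open set is what actually makes this work.
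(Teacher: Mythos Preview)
Your overall strategy---construct the subbundle locally, descend, then extend across codimension one---is valid, but the paper's execution is cleaner and your argument contains one genuine gap.

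The paper works directly with the smooth universal map $\pi: M_{j_{max}} \times \PP^1 \to V$ (after shrinking to the locus of generic splitting). The positive subbundle $G \subset \pi^*E$ has rank $m_{max}$, and the descent datum over $Z \times_V Z$ is the observation that for two points lying over the same $p \in V$, the corresponding fibers of $G$ are the \emph{same} $m_{max}$-dimensional subspace of $E_p$: by Proposition~\ref{prop-pos-subspace} their span cannot exceed $m_{max}$, forcing equality. This descends $G$ to a subbundle on the open image $U$, and the resulting section of the Grassmann bundle $\Gr{m_{max}}{E} \to V$ extends across codimension one because $V$ is smooth and the target is projective. Your \'etale-local-section construction and your DVR/saturation extension are correct but more laborious realizations of these two moves.

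The gap is the formula $\dim \mc{D}(p) = \operatorname{rk} E - \dim H^0\bigl(ev^{-1}(p), (ev^*E/Pos)^\vee\bigr)$. This is not right: an arbitrary section of $(ev^*E/Pos)^\vee$ over the fiber need not arise from a single linear functional on $E_p$, so the right-hand side does not compute $\operatorname{codim}_{E_p}\mc{D}(p)$; moreover $Pos$ is only defined on the locus of generic splitting, not on all of $ev^{-1}(p)$. What you actually need is the upper bound $\dim \mc{D}(p) \leq m_{max}$ for general $p$, and this is precisely the content of Proposition~\ref{prop-pos-subspace}: were the span strictly larger, gluing two witnesses through $p$ and smoothing would produce a curve with positive rank exceeding $m_{max}$. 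Once you invoke that proposition in place of the $H^0$ formula, your proof goes through---but note that this same observation already furnishes the descent datum directly on the universal family, which is why the paper can bypass the sum-of-subbundles construction entirely.
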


\begin{proof}
A proof can be found in \cite{Mingmin_foliations}, Proposition 2.5.  Note that the proof given there is not phrased in this generality, but it readily extends.  We sketch the steps here.  Set $M = M_{j_{max}}$.  This is, for a general $[\phi] \in M$, the map corresponding to $\phi$ has maximal positive rank, $m_{max}$.  By shrinking $M$, we may assume that the splitting type for $\phi^*E$ is constant over $M$.  The universal map $\pi: M \times \PP^1 \rightarrow W$ is smooth, let $U$ be the image (which is open).  Denote the positive subbundle of $\pi^*E$ by $G$ and let $Z = M \times \PP^1$.  We may construct descent data on $G$ from the descent data on $\pi^*E$.  Indeed, for the scheme $Z \times_X Z$ with projections $p$ and $q$ to $Z$, there is an isomorphism $p^*G \cong q^*G$ because the positive subspaces are well defined.  That this isomorphism satisfies the cocycle condition follows from the same fact.  By faithfully flat descent, there is a subbundle $\mc{F}_U$ of $E$ over $U$, that pulls back to $G$ via $\pi$.

The bundle $\mc{F}_U$ on $U$ determines a section of $Gr(m_{max}, E)$ over $U$ which extends to an open set with complement of codimension at least $2$ because the Grassmannian is projective.  Denote $\mc{F}$ the smallest coherent sheaf such that restricts to this bundle over $U$.  This $\mc{F}$ satisfies the claim.
\end{proof}

We now specialize to the setting of Setup~\ref{setup_main}.  We take $V = W$ and we set $E = T_f$, a rank $m - n$ bundle.  It follows immediately from the hypothesis that $\phi^*T_W$ is globally generated.  In other words, $\phi$ is a free curve on $W$.  We may then run the entire preceding ``program'' for this map.

\begin{cor}\label{cor_glue}
The $\mc{D}(p)$ for this setup glue together in the sense that there is a subsheaf $\mc{D}$ of the relative tangent bundle $T_f$ such that over an open set $U$ of $W$, $\mc{D}_p = \mc{D}(p)$ for each $p \in U$.  The sheaf $\mc{D}$ restricts to be a vector bundle of rank $m_{max}$ on an open set whose complement has codimension at least two.
\end{cor}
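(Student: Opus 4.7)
The plan is to obtain this corollary by specializing Proposition~\ref{prop_glue_bundle} to the case $V := W$ and $E := T_f$. Once the hypotheses of Situation~\ref{gen_situation} are verified for $\phi$ with these choices, the sequence of components $\{M_j\}$, the definition of $\mc{D}(p)$, and the gluing argument of Proposition~\ref{prop_glue_bundle} apply verbatim, and their conclusion is precisely the claim of the corollary. Thus the task reduces to checking that Situation~\ref{gen_situation} holds in the setting of Setup~\ref{setup_main}.

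First I would verify that $\phi$ is a free curve on $W$. Because the image of $\phi$ lies in the smooth locus of $f$ (Remark~\ref{rmk_general_case}), the short exact sequence of tangent sheaves
$$0 \to T_f \to T_W \to f^*T_X \to 0$$
pulls back along $\phi$ to a short exact sequence of locally free sheaves on $\PP^1$. Hypothesis A says that $\phi^*f^*T_X$ is globally generated, and Hypothesis B says that $\phi^*T_f$ is globally generated; hence the middle term $\phi^*T_W$ is globally generated as an extension, and $\phi$ is free. The second condition in Situation~\ref{gen_situation} with $E = T_f$, namely that $\phi^*T_f$ be globally generated of positive degree, is Hypothesis B itself.

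Two technical points remain, which I view as bookkeeping rather than obstacles. The first is the requirement that $\deg \phi^* T_W > 2$; if this fails for the original $\phi$ I would replace $\phi$ by a general smoothing of a nodal chain of $k$ general members of $M_1$. By Lemma~\ref{lem_well_defined_sequence}, such a smoothing is a general point of $M_k$, remains free, still has $\phi^* T_f$ globally generated of positive degree, and for $k$ sufficiently large has $\deg \phi^*T_W > 2$; the sequence of components $\{M_j\}$ is unaffected up to reindexing. The second point is that $T_f$ is only locally free on the open smooth locus $W^\circ \subset W$ of $f$, but every curve $[\phi] \in M_j$ factors through $W^\circ$ by Remark~\ref{rmk_general_case}, so the universal map of Proposition~\ref{prop_glue_bundle} lands in $W^\circ$ and the entire faithfully flat descent argument takes place there. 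The resulting subsheaf is a subsheaf of $T_f|_{W^\circ}$, which in turn sits inside $T_f$ on $W$; its extension as a coherent subsheaf of $T_f$ is a vector bundle of rank $m_{max}$ on an open set of $W$ whose complement has codimension at least $2$, as required.
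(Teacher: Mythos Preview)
Your proposal is correct and follows exactly the paper's approach: the corollary is obtained by specializing Proposition~\ref{prop_glue_bundle} to $V=W$, $E=T_f$, after observing (as the paper does in the paragraph immediately preceding the corollary) that Hypotheses A and B force $\phi^*T_W$ to be globally generated. You are in fact more careful than the paper, which simply asserts that $T_f$ is ``a rank $m-n$ bundle'' and that one may ``run the entire preceding program,'' without commenting on the degree bound in Situation~\ref{gen_situation} or on the locus where $T_f$ fails to be locally free; your handling of both points is reasonable and does not change the argument.
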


Because $\mc{D}$ is a subsheaf of $T_f$ and so of $T_W$, we may restrict the Lie bracket on $T_W$ to $\mc{D}$.

\begin{lem}
The sheaf $\mc{D}$ above is integrable, that is, $[\mc{D}, \mc{D}] \subseteq \mc{D}$.
\end{lem}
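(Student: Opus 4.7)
The plan is to prove integrability by showing that the $\mc{O}_W$-linear \emph{bracket tensor}
$$\tau : \Lambda^2 \mc{D} \longrightarrow T_W/\mc{D},$$
induced by the Lie bracket on $T_W$, vanishes. The Leibniz correction terms lie in $\mc{D}$, so $\tau$ is well-defined and $\mc{O}_W$-linear; in characteristic zero its vanishing is Frobenius integrability. If $m_{max}\le 1$ there is nothing to prove, so we may assume $m_{max}\ge 2$. The first move is to shrink the target: because $T_f = \ker(df)$ is itself integrable (a standard computation shows that if $v,w$ annihilate pullbacks $f^*g$ for $g\in\mc{O}_X$, then so does $[v,w]$), we have $[\mc{D},\mc{D}]\subseteq T_f$, and $\tau$ factors through
$$\bar\tau : \Lambda^2 \mc{D} \longrightarrow T_f/\mc{D}.$$
This reduction is essential: after pulling back to a general curve in $M$, the quotient $T_f/\mc{D}$ becomes trivial, whereas $T_W/\mc{D}$ in general carries positive pieces coming from $f^*T_X$.

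Next I would work on the open set $U\subseteq W$ of Corollary~\ref{cor_glue}, on which $\mc{D}$ is a rank-$m_{max}$ subbundle of $T_f$. Let $M := M_{j_{max}}$ and choose a very general $\phi : \PP^1 \to W$ in $M$; since $W\setminus U$ has codimension at least two while $\phi(\PP^1)$ is one-dimensional, we may arrange $\phi(\PP^1)\subset U$. For each $t\in \PP^1$ the fibre $\mc{D}|_{\phi(t)}$ contains $Pos(\phi)|_t$ by the very definition of $\mc{D}(\phi(t))$ (take the reparametrization of $\phi$ itself among the curves whose positive parts span $\mc{D}(\phi(t))$); so $\phi^*\mc{D}\supseteq Pos(\phi)$ as subsheaves of $\phi^*T_f$. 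Both sides are saturated subbundles of $\phi^*T_f$ of rank $m_{max}$, so they coincide, and writing $\phi^*T_f = Pos(\phi)\oplus \mc{O}_{\PP^1}^{a}$ yields
$$\phi^*(T_f/\mc{D}) \;\cong\; \mc{O}_{\PP^1}^{a}.$$

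Pulling back $\bar\tau$ along $\phi$ then produces an $\mc{O}_{\PP^1}$-linear map $\Lambda^2 Pos(\phi) \to \mc{O}_{\PP^1}^{a}$. Since $Pos(\phi)$ is ample on $\PP^1$ by construction, each line-bundle summand of $\Lambda^2 Pos(\phi)$ has strictly positive degree, so there is no nonzero map to a trivial bundle; hence $\phi^*\bar\tau = 0$. Such curves $\phi$ sweep out a dense open subset of $U$, so $\bar\tau|_U = 0$, and this extends to all of $W$ via the codimension-two statement in Corollary~\ref{cor_glue} combined with the saturation of $\mc{D}$ in $T_f$. The hard part will be the identification $\phi^*\mc{D} = Pos(\phi)$: this is the assertion that a general $\phi\in M$ achieves the full rank of $\mc{D}$ at every point of its image, which ultimately rests on the maximality of $m_{max}$ built into the construction of $\mc{D}$ via Proposition~\ref{prop-pos-subspace}. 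Once this identification is granted, the conclusion is the one-line positivity observation that an ample bundle on $\PP^1$ admits no nonzero map to a trivial bundle.
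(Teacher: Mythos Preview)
Your proposal is correct and follows essentially the same approach as the paper: both arguments reduce to showing that the $\mc{O}_W$-linear map $\mc{D}\otimes\mc{D}\to T_f/\mc{D}$ (you use $\Lambda^2\mc{D}$, which is equivalent) vanishes by pulling back along general curves in $M_{j_{max}}$, where the source becomes a sum of positive line bundles and the target becomes trivial. The only cosmetic difference is that the paper pulls back along the universal family $M\times\PP^1\to W$ and invokes descent, while you pull back along individual curves and invoke density; your explicit observation that $[\mc{D},\mc{D}]\subseteq T_f$ (since $T_f$ is itself integrable) is implicit in the paper's diagram, and your identification $\phi^*\mc{D}=Pos(\phi)$ is the content of the paper's Lemma~\ref{lem-condition}.
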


\begin{proof}
Again see \cite{Mingmin_foliations}, Proposition 2.6.  Restrict $\mc{D}$ to the open set $U$ where it is a vector bundle.  Consider the following diagram,
$$
\xymatrix{
\mc{D} \otimes_{\mb{C}} \mc{D} \ar[r]^{[\_,\_]} \ar[rd] & T_f \ar[r] & T_f / \mc{D} \\
& \mc{D} \otimes_{\mc{O}_\mc{C}} \mc{D} \ar[ru]^p
}.
$$
We form $M$ as in the proof of Proposition~\ref{prop_glue_bundle}.  For a general $\phi \in M$, we have $\phi^*T_f = \oplus_{i = 1}^{m_{max}} \OO(a_i) \oplus \OO^{m - n - m_{max}}$ with each $a_i > 0 $.  Let $\pi$ be the map $M \times \PP^1 \rightarrow \mc{C}$, then we have $\pi^*\mc{D} \otimes_{\mc{O}_\mc{C}} \mc{D} = \bigoplus \OO(a_i + a_j)$ and $\pi^*(T_f / \mc{D}) = \OO^{m - n - m_{max}}$.  The map $p$ pulled back by $\pi$ corresponds to a map
$$
\bigoplus \OO(a_i + a_j) \rightarrow \mc{O}^{m - n - m_{max}}.
$$
Since each $a_i > 0$, $\pi^*p = 0$ and by a descent argument, we have $p = 0$.  This implies that $\mc{D}$ is closed under the Lie bracket, as was to be shown.
\end{proof}

\begin{lem}\label{lem-condition}
If $\phi \in M_{j_{max}}$ is general, then $\phi(\PP^1) \subseteq U$ and $\phi^*\mc{D}$ is ample.
\end{lem}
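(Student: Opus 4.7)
For a general $\phi \in M_{j_{max}}$, the claim splits into (a) $\phi(\PP^1) \subseteq U$, where $U$ is the open locus in $W$ on which $\mc{D}$ is a rank-$m_{max}$ vector bundle, and (b) $\phi^*\mc{D}$ is ample. Once (a) is in hand, (b) follows essentially for free from the descent construction of $\mc{D}|_U$ given in the proof of Proposition~\ref{prop_glue_bundle}.

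\textbf{Step (a): a general $\phi$ avoids the bad locus.} Set $Z := W \setminus U$; by Corollary~\ref{cor_glue} it has codimension at least $2$ in $W$. By Lemma~\ref{lem_well_defined_sequence} a general point of $M_{j_{max}}$ corresponds to a free rational curve on $W$, so the universal evaluation map $ev: \mc{C} \to W$ from the universal curve $\mc{C} \to M_{j_{max}}$ is dominant and is smooth at the generic free curve (since $H^1(\PP^1, \phi^*T_W) = 0$). Hence $ev^{-1}(Z)$ has codimension at least $2$ in $\mc{C}$, and since the projection $\mc{C} \to M_{j_{max}}$ has one-dimensional fibers, the image of $ev^{-1}(Z)$ in $M_{j_{max}}$ has codimension at least $1$. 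A general $\phi \in M_{j_{max}}$ therefore avoids $Z$, i.e.\ $\phi(\PP^1) \subseteq U$.

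\textbf{Step (b): identifying $\phi^*\mc{D}$ with $Pos(\phi)$.} In the proof of Proposition~\ref{prop_glue_bundle} (with $M = M_{j_{max}}$), $\mc{D}|_U$ was constructed by faithfully flat descent precisely so that its pullback under the universal evaluation map $\pi$ from a suitable dense open of $M_{j_{max}} \times \PP^1$, on which the splitting type of $\pi^*T_f$ is constant, coincides with the positive subbundle $G \subseteq \pi^*T_f$. Pulling this identity back along our $\phi: \PP^1 \to W$, whose image lies in $U$ by (a), gives $\phi^*\mc{D} = Pos(\phi)$. Since $\phi$ is general in $M_{j_{max}}$, the splitting type of $\phi^*T_f$ realizes the maximum positive rank $m_{max}$, so $Pos(\phi) \cong \bigoplus_{i=1}^{m_{max}} \OO(a_i)$ with every $a_i > 0$, and $\phi^*\mc{D}$ is therefore ample on $\PP^1$.

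\textbf{Main obstacle.} The only mildly delicate point is the codimension count in (a) in the Kontsevich-stack setting, but its substance is the standard fact that free rational curves through a general point of a smooth variety sweep out all codimension-one directions while avoiding any fixed codimension-two closed subset.
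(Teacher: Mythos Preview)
Your proposal is correct and follows essentially the same approach as the paper. The paper's proof is terser: for (a) it simply cites the standard fact (Koll\'ar, \emph{Rational Curves on Algebraic Varieties}, Theorem II.3.7) that free rational curves can be deformed off any fixed codimension-two locus, which is exactly the content of your dimension count; for (b) it just says ``clear by construction,'' which is your Step~(b) in one phrase.
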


\begin{proof}
For the first statement either note that $\bigcup_{\phi \in U_{2w}} \phi(\PP^1)$ is open in $\mc{C}$ and contained in $U$, or that the free rational curves can be deformed to miss any fixed codimension two locus; see \cite{Kollar-ratcurves} Theorem II.3.7.  The last statement is clear; for a bundle to be ample on $\PP^1$ simply means it is the direct sum of ample line bundles, which it is by construction here.
\end{proof}

The existence of the distribution $\mc{D} \subseteq T_W$ is equivalent to a holomorphic foliation by the holomorphic Frobenius Theorem (see \cite{Voisin_HodgeTheoryBookI}, Section 2.3).  The following theorem of Kebekus, Sol\'{a}, and Toma \cite{KST_foliations} supplies an algebraic analogue:

\begin{thm}\label{algebraic-leaves-thm}
(See \cite{KST_foliations}) Let $V$ be a complete normal variety and $C \subseteq V$ a complete curve contained entirely in $V_{reg}$.  Suppose $F \subseteq T_V$ is a foliation which is regular and ample on $C$.  Then, for every point $x \in C$, the leaf through $x$ is algebraic.
\end{thm}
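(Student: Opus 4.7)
The plan is to follow the approach of Bogomolov--McQuillan, extended to higher-rank foliations in \cite{KST_foliations}: use the ampleness of $F|_C$ to produce many rational curves tangent to $F$ through each point $x \in C$, and assemble them into algebraic leaves via Frobenius integrability.

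First, I would consider the scheme $\mathrm{Mor}_F$ of morphisms $\psi : \PP^1 \to V$ whose differential factors through $F$. At such a $\psi$, the tangent space is $H^0(\PP^1, \psi^* F)$ and the obstructions lie in $H^1(\PP^1, \psi^* F)$, so positivity of $\psi^* F$ produces a large, unobstructed piece of this moduli space. To bootstrap the ampleness of $F|_C$ into the existence of at least one such rational curve meeting a chosen point $x \in C$, I would form the formal leaf $\widehat{L}_x \subset \widehat{V}_x$ provided by Frobenius on the formal neighborhood, and then restrict attention to the formal family of leaves over $C$. On this family, the ampleness of $F|_C$ supplies positive degree in the transverse direction, and a Mori-style bend-and-break argument --- applied first to formal curves and then algebraized via Grothendieck's existence theorem or Artin approximation --- should yield an honest rational curve $\psi_0 : \PP^1 \to V$ tangent to $F$ and passing through $x$.

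Second, given $\psi_0$, semicontinuity of ampleness together with the positivity inherited from $F|_C$ forces $\psi_0^* F$ to contain an ample subbundle of the appropriate rank. Free $F$-tangent deformations of $\psi_0$ fixing $x$ then sweep out a subvariety $L_x \subseteq V$ of dimension $r = \mathrm{rank}(F)$. By integrability of $F$, the tangent space to $L_x$ at a general point equals $F|_{L_x}$, so $L_x$ is the algebraic leaf through $x$. Because $F|_C$ is ample at \emph{every} point of $C$, not just a generic one, the same construction runs uniformly for each $x \in C$, giving the conclusion.

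The main obstacle is the initial step: converting the cohomological ampleness of $F|_C$ into an actual rational curve tangent to $F$. Classical Mori bend-and-break demands a positive-dimensional family of honest curves, whereas here only formal positivity in the transverse direction is available; bridging this gap --- formal bend-and-break followed by an algebraization step --- is the technical heart of \cite{KST_foliations}, and in practice I would invoke their theorem rather than reconstruct the argument from scratch.
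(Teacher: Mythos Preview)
The paper does not give its own proof of this statement; it is quoted as a theorem of Kebekus, Sol\'a~Conde, and Toma, and the only hint the paper offers is in the remark immediately following: ``At the heart of the algebraicity argument is a result due to Hartshorne related to formal neighborhoods of subvarieties.'' So there is nothing to compare your argument to within the paper itself, but it is worth comparing your sketch to the actual KST/Bogomolov--McQuillan argument, since that is what the paper is invoking.

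Your proposal has the two main ingredients of \cite{KST_foliations} reversed. In KST, algebraicity of the leaves is proved \emph{first}, and it does not use bend-and-break or the production of rational curves at all. One considers the family of formal leaves along $C$, sitting as a formal subscheme of $C \times V$ (or works on the projectivization $\PP(F|_C)$); the ampleness of $F|_C$ translates into ampleness of the normal bundle of $C$ inside this formal object, and then Hartshorne's theorem on formal rational functions (the ``$G_2$'' / formal-neighborhood result) forces the field of formal meromorphic functions to have finite transcendence degree, hence the leaves are algebraic. Only \emph{after} the leaves are known to be algebraic does one run Mori/Miyaoka-type arguments to deduce that the leaves are rationally connected --- a conclusion the present paper explicitly says it does not need.

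Your sketch instead tries to establish algebraicity by first manufacturing rational curves tangent to $F$ via a ``formal bend-and-break,'' and then sweeping those curves into an algebraic leaf. The gap is exactly the step you flag yourself: there is no mechanism to turn ampleness of $F|_C$ directly into an honest rational curve tangent to $F$ without already knowing something algebraic about the leaf. Bend-and-break needs a positive-dimensional family of genuine complete curves to degenerate; formal leaves do not supply that, and Artin approximation does not convert a formal leaf into an algebraic curve absent precisely the finite-transcendence-degree input that Hartshorne's theorem provides. So your route, as written, is circular: the algebraization step you defer to \cite{KST_foliations} \emph{is} the content of the theorem, and it goes through Hartshorne, not through rational curves.
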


\begin{rmk}
The main result of \cite{KST_foliations} is that the leaves of the foliation are rationally connected, a fact we will not use.  At the heart of the algebraicity argument is a result due to Hartshorne related to formal neighborhoods of subvarieties.
\end{rmk}

\begin{thm}\label{thm-foliation}
The leaves of the foliation given by $\mc{D}$ on a (possibly different) open set $U \subseteq W$ are algebraic.  The complement of $U$ in $W$ still has codimension at least $2$.  Further, there is a projective variety $Y$ fitting into a commutative diagram
$$
\xymatrix{
U \ar[d]^{f|_U} \ar[dr]^h\\
X & Y. \ar[l]^\psi
}
$$
Over $U$, the relative tangent bundle $T_h$ agrees with $\mc{D}$.  The dimension of $Y$ is $m - m_{max}$.
\end{thm}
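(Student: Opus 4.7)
The plan is to apply Theorem~\ref{algebraic-leaves-thm} to obtain algebraic leaves through every point of a suitable open set, then assemble these leaves into an algebraic family whose base will serve as $Y$.

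First, let $U_0 \subseteq W$ be the open set, with complement of codimension at least $2$, on which $\mc{D}$ is a vector bundle (as furnished by Corollary~\ref{cor_glue}); on $U_0$ the foliation is regular. By Lemma~\ref{lem-condition}, a general $[\phi] \in M_{j_{max}}$ has image inside $U_0$ with $\phi^{*}\mc{D}$ ample, so Theorem~\ref{algebraic-leaves-thm} guarantees that the leaf of $\mc{D}$ through every point of $\phi(\PP^1)$ is algebraic. Since the free curves parametrized by $M_{j_{max}}$ can be deformed to meet any fixed point lying off a codimension-$2$ locus (Theorem II.3.7 of \cite{Kollar-ratcurves}), the union of their images covers an open subset $U \subseteq U_0$ whose complement has codimension at least $2$ and through every point of which the leaf is algebraic.

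Next, I will realize these leaves as an algebraic family. Each leaf $L_u$ through $u \in U$ is an irreducible closed subvariety of $W$ of dimension $m_{max}$, contained in the fiber $f^{-1}(f(u))$ because $\mc{D} \subseteq T_f$. The regularity of $\mc{D}$ on $U_0$ together with the algebraicity of the leaves ensures that the leaves fit into a flat family over $U$, so the assignment $u \mapsto [L_u]$ is a morphism from $U$ into an appropriate quasi-projective subscheme of $\text{Chow}(W)$. Let $Y$ be the projective closure of its image, passing to a normalization if needed; this produces a morphism $h: U \to Y$ whose fibers over $h(U)$ are precisely the leaves. Since $f$ is constant along each leaf, $f|_U$ factors uniquely through $h$, yielding the morphism $\psi: Y \to X$ required by the diagram.

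For the final assertions, each general fiber of $h$ has dimension $m_{max}$, so $\dim Y = m - m_{max}$. On the open locus where $h$ is smooth (which is still the complement of codimension at least $2$ in $W$), $T_h$ is a rank $m_{max}$ subbundle of $T_W$ containing $\mc{D}$, because $h$ contracts every leaf tangent to $\mc{D}$; as $\mc{D}$ also has rank $m_{max}$, the inclusion is an equality, and we take this as the final $U$. The main obstacle is showing that the assignment $u \mapsto [L_u]$ is a genuine algebraic morphism into $\text{Chow}(W)$ rather than merely a well-defined set-theoretic map and that the resulting base can be taken projective; this is the standard but delicate passage from a regular foliation with algebraic leaves to its algebraic quotient, and requires verifying flatness of the universal leaf family over $U$.
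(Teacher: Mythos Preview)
Your outline follows the same architecture as the paper's proof: apply Theorem~\ref{algebraic-leaves-thm} along the free curves from $M_{j_{max}}$ to get algebraic leaves on a big open $U$, then package the leaves via a map to $\text{Chow}(W)$, and finally factor $f$ through the resulting quotient.  The two points where you are less precise than the paper are exactly the ones you flag yourself.

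For the morphism to $\text{Chow}(W)$, the paper does not try to verify flatness of a universal leaf family directly.  Instead it builds the incidence correspondence $\mc{I}_U = \{(u,x)\in U\times W : x\in \text{Leaf}(u)\}$ and observes that the projection $p_2:\mc{I}_U\to U$ is proper and generically smooth.  This is precisely the input needed for Koll\'ar's construction (\cite{Kollar-ratcurves}, I.3), which produces an honest morphism from the smooth locus $U'\subset \mc{I}_U$ of $p_2$ to $\text{Chow}(W)$.  One then takes an open piece $Y_0$ of the (constructible) image, compactifies to a projective $Y$, and notes that since leaves are disjoint, $\mc{D}|_{U''}=T_h$ on the preimage $U''=h^{-1}(Y_0)$.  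This sidesteps your flatness concern entirely.

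For $\psi$, your assertion that ``$f|_U$ factors uniquely through $h$'' only gives a map of sets; nothing yet says it is a morphism of varieties.  The paper instead argues at the level of function fields: any rational function on $X$ pulls back to a rational function on $W$ that is constant on the fibers of $h$, hence descends to a rational function on $Y$.  This yields an inclusion $\CC(X)\subset\CC(Y)$, i.e.\ a dominant rational map $\psi:Y\dashrightarrow X$, which is then made everywhere defined by replacing $Y$ with a blowup.  You should adopt this device (or something equivalent) rather than asserting the factorization directly.
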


\begin{proof}
Changing notation slightly, denote by $V \subseteq W$ be the largest open set where $\mc{D}|_p = \mc{D}(p)$.  By Lemma~\ref{lem-condition}, the open set $U = \bigcup_{\phi \in U_{2w}} \phi(\PP^1)$ is contained in $V$ and for a general $\phi \in M_j$, $\phi^*\mc{D}$ is ample.  By Theorem~\ref{algebraic-leaves-thm}, on the open set $U$, every leaf of $\mc{D}$ is algebraic.  We have a map (of sets for the moment),
$$ U \rightarrow \text{Chow}(W), u \mapsto \text{ (Leaf through u) } $$
We claim that the image of this map, call it $Y$, satisfies the statement of the theorem.  Consider the incidence correspondence $\mc{I}_U \subseteq U \times W$ given by $\{(u,x)| x \in \text{ Leaf}(u)\}$.  The projection $p_2: \mc{I}_U \rightarrow U$ is proper and generically smooth, so gives rise to a map $h: U' \rightarrow \text{Chow}(W)$ where $U' \subseteq \mc{I}_U$ is the smooth locus of $p_2$; see \cite{Kollar-ratcurves} I.3.  The image of $h$ is constructible, so there is an open subvariety $Y_0$ which is dense in its closure.  Let $U'' = h^{-1}(Y_0)$.  As $\text{Chow}(W)$ is projective and we are in characteristic $0$, $Y_0$ embeds in a smooth projective variety, $Y$.  The morphism $h$ is a rational map $W \dashrightarrow Y$ which is defined on $U''$.  Two points in $U''$ have the same image in $Y$ if and only if they lie on the same leaf (the leaves of a foliation are disjoint) which implies that $\mc{D}|_{U''}$ equals the relative tangent bundle $T_h$.  Since $W$ is projective, the map extends over an open set whose complement has codimension at least $2$ in $W$.

There is at least a map of sets $\psi: Y_0 \rightarrow X$ sending all points in $\text{Leaf}(u)$ to $f(u)$.  Let $f \in \CC(X)$ be a rational function on $X$.  The pullback $ev^{-1}(f) \in \CC(W)$ is a rational function on $W$.  By construction, this function is constant on the fibers of $U'' \rightarrow Y_0$.  It then comes from a rational function $f' \in \CC(Y)$.  In other words, we have an inclusion of fields $\CC(X) \subset \CC(Y)$ and so a dominant rational map $\psi: Y \dashrightarrow X$.  By replacing $Y$ with a blowup, we may as well assume this map is everywhere defined.
\end{proof}

We consider the picture restricted over a general point $x \in X$.
$$
\xymatrix{
U_x \ar[d] \ar[dr]^{h_x}\\
x & Y_x. \ar[l]
}
$$

\begin{prop}\label{prop_contract_curve_ample}
Suppose that the map $h_x$ contracts a complete curve contained in $U_x$.  Then $m_{max} = m - n$, and so for a general $[\phi] \in M_{j_{max}}$, $\phi$ is relatively very free.
\end{prop}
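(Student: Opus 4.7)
My plan is to argue by contradiction using the Picard rank $1$ hypothesis on the fibers of $f$: if $h_x$ were non-constant, then any ample line bundle on $Y$ would pull back to a non-trivial class on $U_x$ which, by Picard rank $1$, must be numerically a positive multiple of the ample generator of $f^{-1}(x)$; this is incompatible with the existence of a complete curve in $U_x$ contracted by $h_x$.

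I first set up the dimensions.  Since $\mc{D} = T_h$ has rank $m_{max}$ on $U$ by Theorem~\ref{thm-foliation}, we have $\dim Y = m - m_{max}$, so $\dim Y_x = m - m_{max} - n$ and the fibers of $h_x \colon U_x \to Y_x$ have dimension $m_{max}$.  Thus proving $m_{max} = m - n$ is equivalent to proving that $h_x$ is constant.  For a general $x \in X$, generic smoothness of $f$ (a dominant morphism of smooth varieties in characteristic zero) makes $f^{-1}(x)$ smooth; it has Picard number $1$ by hypothesis, and the complement of $U_x = U \cap f^{-1}(x)$ in $f^{-1}(x)$ has codimension at least $2$, inherited from the codimension-two complement of $U$ in $W$.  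The standard Hartogs-type extension of divisor classes on a smooth variety then yields $\mathrm{Pic}(U_x) = \mathrm{Pic}(f^{-1}(x))$, so $U_x$ also has Picard number $1$.

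Now assume for contradiction that $h_x$ is not constant, and fix an ample line bundle $L$ on the projective variety $Y$.  For $n \gg 0$, a general global section of $L^n$ does not vanish identically on the positive-dimensional image $h_x(U_x)$, so it pulls back to a nonzero section of $h_x^*L^n$ on $U_x$, defining a non-trivial effective divisor.  Extending this divisor to $f^{-1}(x)$ via the Picard identification and applying Picard rank $1$, we find that $h_x^*L$ is numerically a positive rational multiple of the ample generator $A$ of $\mathrm{Pic}(f^{-1}(x))$.  But for the hypothesized contracted curve $C \subset U_x$ this gives $0 = h_x(C) \cdot L = C \cdot h_x^*L > 0$, a contradiction.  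Hence $m_{max} = m - n$, and for a general $[\phi] \in M_{j_{max}}$ the positive part $Pos(\phi)$ has rank equal to that of $\phi^*T_f$, so $\phi^*T_f$ is a direct sum of positive-degree line bundles on $\PP^1$, hence ample.  That is, $\phi$ is relatively very free.

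The main technical point I expect to watch is the identification $\mathrm{Pic}(U_x) = \mathrm{Pic}(f^{-1}(x))$ for general $x$: this depends both on generic smoothness of $f$ and on the codimension-two complement of $U$ intersecting the general fiber in codimension at least $2$.  With that in hand, the positivity contradiction between $h_x^*L$ being numerically a positive multiple of $A$ and $C \cdot h_x^*L = 0$ is a direct consequence of Picard rank $1$.
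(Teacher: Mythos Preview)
Your argument is correct and follows the same strategy as the paper: both use that removing a codimension-$2$ locus from the smooth fiber $W_x$ does not change the relevant second invariants, and then exploit the Picard rank $1$ hypothesis to derive a contradiction from the contracted curve.  The paper carries this out topologically, using the Gysin sequence to obtain $H^2(W_x,\ZZ)\cong H^2(U_x,\ZZ)$ and $H_2(U_x,\ZZ)\cong H_2(W_x,\ZZ)$, and then observing that a contracted complete curve with nonzero homology class forces $h_x$ to be constant.  You instead use the Hartogs-type identification $\mathrm{Pic}(U_x)=\mathrm{Pic}(W_x)$ and pair the contracted curve against the pullback of an ample class on $Y$.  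These are equivalent packagings of the same idea; your version has the mild advantage of making the numerical contradiction $0 = C\cdot h_x^*L > 0$ completely explicit.

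One small expository slip to fix: from ``a general section of $L^n$ does not vanish identically on $h_x(U_x)$'' you only get that the pulled-back section is nonzero, not that its zero divisor is nonempty; a nowhere-vanishing section would give the trivial divisor.  To guarantee a \emph{nontrivial} effective divisor you must also arrange that the section vanishes at some point of $h_x(U_x)$.  This is immediate once $h_x$ is non-constant: pick $p,q\in U_x$ with $h_x(p)\neq h_x(q)$ and, for $L$ very ample, a section vanishing at $h_x(p)$ but not at $h_x(q)$.  With that adjustment your conclusion that $h_x^*L$ extends to a numerically positive multiple of the ample generator on $W_x$, and hence the contradiction with $C\cdot h_x^*L=0$, is fully justified.
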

\begin{proof}
Define $Z_x$ to be the complement of $U_x$ in $W_x$.  Note that the codimension of $Z_x$ in $W_x$ is at least $2$.  We claim that $H^2(W_x, \ZZ) \rightarrow H^2(U_x, \ZZ)$ is an isomorphism.  Similarly $H_2(U_x, \ZZ) \rightarrow H_2(W_x, \ZZ)$ is an isomorphism.  To verify the claim, set $Z_x^1 = Z_x - (Z_x)_{sing}$, $W_x^1 = W_x - (Z_x)_{sing}$.  Note that $U_x = W_x^1 - Z_x^1$.  We have the Gysin sequence in cohomology whose relevant portion looks like:
$$
H^2_{Z_x^1}(W_x^1) \rightarrow H^2(W_x^1) \rightarrow H^2(U_x) \rightarrow H^3_{Z_x^1}(W_x^1)
$$
(all coefficients are understood to be $\ZZ$).  As the codimension of $Z_x^1$ in $X$ is $c \geq 2$ we have that $H^2_{Z_x^1}(W_x^1) = H^{2 - 2c}(Z_x^1) = 0$ and $H^3_{Z_x^1}(W_x^1) = H^{3 - 2c}(Z_x^1) = 0$ so that $H^2(W_x^1) \cong H^2(U_x)$.  Now set $Y^2 = (Z_x)_{sing}$ and let $Z_x^2 = Y^2 - Y^2_{sing}$ and $W_x^2 = W_x - Y^2_{sing}$.  We repeat the same argument above to get that $H^2(W_x^2) \cong H^2(W_x^1)$.  Repeating the argument implies that $H^2(W_x) \cong H^2(U_x)$.  Thus, $H^2(U_x)$ has rank $1$.  By the homology exact sequence of a pair and the Thom isomorphism in homology, the same argument shows that $H_2(U_x) \cong H_2(W_x)$.

If $h_x$ contracts a complete curve contained in $U_x$, then because this curve represents a homology class in $U_x$, the map $h_x$ must map all of $U_x$ to a point.  This implies then that the rank of $T_h$ is maximal, namely $m - n$.  The claim then follows by construction of the foliation, Lemma~\ref{lem_max_dim}, Corollary~\ref{cor_glue}, and Theorem~\ref{thm-foliation}.
\end{proof}

\section{The Geometric Construction}\label{section_geometry}

Using the foliation in Theorem~\ref{thm-foliation} and the contraction result of Proposition~\ref{prop_contract_curve_ample}, the goal is to prove the existence of complete curves in $U_x$ contracted by $h_x$.  In the setting of Setup~\ref{setup_main}, we will show that a large dimensional family of (possibly affine) curves is contracted by the map $h_x$.

Let $M_{1,2}$ be the unique component of $\kspace{2}{W}{\beta}$ dominating $M_1$.  There are two evaluation maps $ev_1, ev_2: M_{1,2} \rightarrow W$.  Points of $M_{1,2}$ will be denoted $(\phi, p, q)$.  For a general $(\phi, p, q) \in M_{1,2}$, the composition map $f \circ \phi: \PP^1 \rightarrow W \rightarrow X$ is a smooth point of $\kspace{0}{X}{\alpha}$, by Hypothesis A.  The point $(f \circ \phi)$ then, is contained in a unique irreducible component, call it $M_X$, of the mapping space $\kspace{0}{X}{\alpha}$.  Let $M_{X,2}$ be the unique component of $\kspace{2}{X}{\alpha}$ dominating $M_{X}$.  For $i = 1,2$ as above, there are evaluation maps $ev_i: M_{X,2} \rightarrow X$.  Form the fiber product $F = M_{X,2} \times_{ev_1, X, f} W$.  A point of $F$ corresponds to the data of a stable, genus $0$ map $C \rightarrow X$, two points $r,s \in C$, and a point $w \in W$ such that $r$ maps to $f(w)$.

There is a map from $M_{1,2} \rightarrow F$ which sends a point $(\phi, p, q)$ to $((f \circ \phi, p, q), \phi(p))$.  Let $\mc{M}$ be the fiber product $M_{1,2} \times_F M_{1,2}$.  A point of $\mc{M}$ may be identified with two, two-pointed maps, $((\phi, p, q), (\phi', p', q'))$ satisfying the following conditions.

\begin{cond}\label{conditions}
Both $(\phi, p, q)$ and $(\phi', p', q')$ are points of $M_{1,2}$ such that
\begin{enumerate}
\item The two maps $f \circ \phi$ and $f \circ \phi'$ determine the same point of $M_X$ up to stabilization (that is, via the map $F: M_{1,2} \rightarrow M_{X,2})$
\item $\phi(p) = \phi'(p')$
\item $f(\phi(q)) = f(\phi'(q'))$.
\end{enumerate}
\end{cond}

Consider the composition of the $pr_2: \mc{M} \rightarrow M_{1,2}$ and $ev_2: M_{1,2} \rightarrow W$.  $$\Psi: \mc{M} \rightarrow W$$ $$((\phi, p, q), (\phi', p', q')) \mapsto \phi'(q').$$

We only wish to consider points of $((\phi, p, q), (\phi', p', q'))$ of $\mc{M}$ which are close to $(\phi, p, q)$ in some sense.  The issue is that the fiber of $\Psi$ (and of $pr_2$) may not be irreducible.  To get around this problem, we use the fact that there is a section of the map $pr_1$, namely the diagonal map $\Delta$.  Given a general $(\phi, p , q) \in M_{1,2}$, we define $D_{\phi, p, q}$ to be the component of $pr_1^{-1}(\phi, p, q)$ which contains $\Delta(\phi,p,q)$.  This is possible as the image of $\Delta$ is generically smooth. Then define $M_{\phi,p,q} = pr_2(D_{\phi, p, q})$ and $C_{\phi,p,q} = \Psi(D_{\phi,p,q})$.

We now formulate a key hypothesis on the smoothness of the whole setup which will be sufficient to prove the main theorem.  Let $R$ be the non-empty, open locus in $M_{1,2}$ such that for each $(\phi, p, q) \in R$ the sheaf $\phi^*T_f$ is globally generated.

\begin{itemize}
\item (Hypothesis C) Let $w \in W$ be a general point, and let $(\phi, p, q) \in R$ be general such that $\phi(p) = w$.  Let $z$ be any point lying over $f(\phi(q))$, and let $(\phi', p', q') \in M_{\phi,p,q}$ be \textit{any} point satisfying the conditions in \ref{conditions} and $\phi'(q') = z$.  The hypothesis is that such a $\phi'$ corresponds to an unobstructed point of the map $ev_2$.
\end{itemize}

\begin{rmk}\label{rmk_weaken_hypothesisC}
It is automatic that such $\phi'$ are smooth points when $\phi'$ corresponds to a map from a smooth rational curve.  This is because $\phi'$ passes through the point $\phi(p)$, which is a general point of $W$.  That the map $\phi'$ is then unobstructed follows from Theorem II.3.11 of \cite{Kollar-ratcurves}.  Thus, Hypothesis C may be replaced with the assumption that the corresponding fact is true at every point of the boundary of $M_{1,2}$.  This will ease the work in verifying the Hypothesis in applications if the curve class $\beta$ is ``close" to being indecomposable.
\end{rmk}

The following dimension computations and estimates are recorded in the following.

\begin{lem}\label{lem_dim_count}
\begin{enumerate}[(i)]
\item The dimension of $\mc{M}$ is $2(-K_W \cdot \beta) + K_X \cdot \alpha + m - 1$.
\item For a general point $w \in W$, the dimension of $\Psi^{-1}(w)$ is $2(-K_W \cdot \beta) + K_X \cdot \alpha - 1$. Every fiber of $\Psi$ has at least this dimension.
\item The dimension of a general fiber of $pr_i : \mc{M} \rightarrow M_{1,2}$ is $-K_W \cdot \beta + K_X \cdot \alpha$.  Every fiber of $pr_i$ has at least this dimension.
\item The dimension of a general fiber of $ev_2: M_{1,2} \rightarrow W$ is $-K_W \cdot \beta - 1$.  Every fiber has at least this dimension.
\item If there is a codimension $2$ locus $Z \subset W$ such that $\Psi^{-1}(Z)$ dominates $M_{1,2}$ via the restriction of the map $pr_1$, then there is a point $z \in Z$ such that $\Psi^{-1}(z)$ has dimension at least $2(-K_W \cdot \beta) + K_X \cdot \alpha$ (one more than expected).  Further, we may find such a point $z$ such that this inequality holds and such that $pr_1(\Psi^{-1}(z))$ contains a general point of $M_{1,2}$.
\end{enumerate}
\end{lem}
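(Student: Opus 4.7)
Hypotheses~A and~B, combined with the exact sequence $0 \to \phi^*T_f \to \phi^*T_W \to \phi^*f^*T_X \to 0$, force $H^1(\PP^1,\phi^*T_W) = 0$, so the Kontsevich spaces $M_{1,2}$ and $M_{X,2}$ are smooth at the relevant points of the expected dimensions $-K_W\cdot\beta + m - 1$ and $-K_X\cdot\alpha + n - 1$.  Since $f$ is smooth along $\phi(\PP^1)$ with fibers of dimension $m-n$, the fiber product $F = M_{X,2}\times_X W$ has dimension $-K_X\cdot\alpha + m - 1$.  By Lemma~\ref{lem_relative_def_fix_pt} the fiber of $M_{1,2}\to F$ over the image of $(\phi,p,q)$ is smooth of dimension $h^0(\PP^1, \phi^*T_f(-p)) = -K_W\cdot\beta + K_X\cdot\alpha =: e$, using that $\phi^*T_f$ is globally generated with $c_1(T_f) = -K_W + f^*K_X$.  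Taking the fiber product yields $\dim\mc{M} = 2\dim M_{1,2} - \dim F = 2(-K_W\cdot\beta) + K_X\cdot\alpha + m - 1$, which is (i).  For (iii), $pr_1$ is the base change of $M_{1,2}\to F$ along itself, so its general fiber has dimension $e$ (and $pr_2$ is symmetric); for (iv), $ev_2 : M_{1,2} \to W$ has general fiber of dimension $\dim M_{1,2} - m = -K_W\cdot\beta - 1$ by the analog of Lemma~\ref{lem_fibers}.  Each of these maps is surjective (via the diagonal section $\Delta$ for $pr_i$ and freeness of $\phi$ for $ev_2$), so Chevalley's fiber-dimension theorem supplies the ``every fiber has at least this dimension'' bounds.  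Finally (ii) comes from factoring $\Psi = ev_2\circ pr_2$ and composing these generic fiber dimensions.

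\textbf{Part (v).}  Set $\tilde Z := \Psi^{-1}(Z)$.  My plan is to exhibit an irreducible component $\tilde Z_0 \subseteq \tilde Z$ dominating $M_{1,2}$ via $pr_1$ with $\dim\tilde Z_0 \geq \dim\mc{M} - 1$, one better than the naive cut-out bound $\dim\mc{M} - 2$ coming from $Z$ being codimension~$2$ in $W$.  Given this, the fiber-dimension theorem applied to the surjective map $\Psi|_{\tilde Z_0} : \tilde Z_0 \to Z$ produces $z \in Z$ with $\dim\Psi^{-1}(z) \geq \dim\tilde Z_0 - \dim Z \geq 2(-K_W\cdot\beta) + K_X\cdot\alpha$, as required; and since $\tilde Z_0$ dominates $M_{1,2}$, one can arrange for such a $z$ to arise as $\phi'(q')$ for a pair in $\tilde Z_0$ lying over any pre-chosen general $(\phi,p,q) \in M_{1,2}$, so that $pr_1(\Psi^{-1}(z))$ automatically contains a general point of $M_{1,2}$.

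\textbf{Main obstacle.}  Upgrading the dimension bound on $\tilde Z_0$ from the naive $\dim\mc{M} - 2$ to $\dim\mc{M} - 1$ is the hardest technical step.  The generic fiber of $pr_1|_{\tilde Z_0}$ over $(\phi,p,q)$ is $\Psi|_{D_{(\phi,p,q)}}^{-1}(Z)$, which by Condition~(3) sits inside the $f$-fiber $F_{f(\phi(q))}$; I would analyze this via the incidence variety $I := \{((\phi,p,q),z) \in M_{1,2}\times Z : z \in C_{(\phi,p,q)}\}$ and a double-counting against its two projections.  The dominance of $pr_1|_{\tilde Z}$ together with the surjectivity of $\Psi$ (which forces the family $\{C_{(\phi,p,q)}\}$ to sweep out $W$ and prevents $C_{(\phi,p,q)} \subseteq Z$ generically) must be combined to rule out the case where $\dim(Z\cap C_{(\phi,p,q)}) = \dim C_{(\phi,p,q)} - 2$ holds uniformly in $(\phi,p,q)$, which is the only way the codim-$2$ bound can fail to be strict.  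This is the one place where the dominance hypothesis, the codimension of $Z$, and the fibered structure of $\Psi$ through $M_{1,2} \to F$ all have to interact.
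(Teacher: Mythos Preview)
Your treatment of parts (i)--(iv) is correct and considerably more explicit than the paper, which simply declares that ``the stacks being discussed are generically smooth'' so that ``Items (i)--(iv) are a straightforward application'' of the expected-dimension formula.  Your use of Lemma~\ref{lem_relative_def_fix_pt} to identify the fiber of $M_{1,2}\to F$ with $H^0(\PP^1,\phi^*T_f(-p))$, and your factorization $\Psi=ev_2\circ pr_2$ to obtain (ii), are exactly the right ingredients.

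For part (v) you are right to be uneasy.  The paper's entire argument is the sentence ``the last part of (v) follows because, by assumption, $\Psi^{-1}(Z)$ maps dominantly to $M_{1,2}$,'' which at face value only accounts for the ``Further'' clause.  As you correctly diagnose, bare dominance of $pr_1|_{\Psi^{-1}(Z)}$ only yields $C_{\phi,p,q}\cap Z\neq\emptyset$ for general $(\phi,p,q)$, and the naive cut-out bound gives $\dim\tilde Z_0\geq\dim\mc{M}-2$ rather than $\dim\mc{M}-1$.  Your proposed incidence-variety double count does not obviously close this gap, and I do not believe the statement follows from the hypothesis as literally written.

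The resolution is to look at how (v) is invoked in the proof of Theorem~\ref{thm_rel_vf}.  There the contradiction hypothesis is not merely that $\Psi^{-1}(Z)$ dominates $M_{1,2}$, but the stronger geometric fact that \emph{every complete curve on $C_{\phi,p,q}$ meets $Z$} for general $(\phi,p,q)$.  Since $C_{\phi,p,q}$ is projective and positive-dimensional, its complement $C_{\phi,p,q}\setminus Z$ is then affine, forcing $\dim(Z\cap C_{\phi,p,q})\geq\dim C_{\phi,p,q}-1$.  Pulling back along the surjection $pr_1^{-1}(\phi,p,q)\to C_{\phi,p,q}$ (whose fibers all have dimension at least $e-\dim C_{\phi,p,q}$) gives
\[
\dim\bigl(\Psi^{-1}(Z)\cap pr_1^{-1}(\phi,p,q)\bigr)\;\geq\;(e-\dim C_{\phi,p,q})+(\dim C_{\phi,p,q}-1)\;=\;e-1,
\]
and hence $\dim\Psi^{-1}(Z)\geq\dim M_{1,2}+(e-1)=\dim\mc{M}-1$, which is precisely your target bound for $\tilde Z_0$.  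From there your own endgame (fiber dimension over $Z$, then the dominance clause) finishes (v).  So the ``main obstacle'' dissolves once you import this stronger input from the ambient argument; the lemma as stated is best read with that context in mind.
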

\begin{proof}
As the stacks being discussed are generically smooth, they have the expected dimension which may be calculated in the standard way.  Items (i)-(iv) are a straightforward application of this; the last part of (v) follows because, by assumption, $\Psi^{-1}(Z)$ maps dominantly to $M_{1,2}$.
\end{proof}

\begin{lem}\label{lem_describe_defs}
For a general $(\phi, p, q) \in M_{1,2}$, the variety $M_{\phi,p,q}$ has dimension $h^0(\PP^1, \phi^*T_f(-1))$ and is smooth at $(\phi, p, q)$.  The locus $M_{\phi,p,q} \cap \kspacenb{2}{W}{\beta}$ consists of smooth points of $M_{1,2}$.
\end{lem}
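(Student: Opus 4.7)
The plan is to identify $M_{\phi,p,q}$ near the basepoint with the stack $\mc{S}_1$ of Lemma~\ref{lem_relative_def_fix_pt} applied to the pointed map $(\phi, p)$.  First I would unpack the fiber product defining $\mc{M}$: the fiber $pr_1^{-1}(\phi,p,q)$ parameterizes pointed stable maps $(\phi',p',q') \in M_{1,2}$ such that $(f \circ \phi', p', q')$ coincides with the fixed pointed stable map $(f \circ \phi, p, q)$ in $\kspace{2}{X}{\alpha}$ and $\phi'(p') = \phi(p)$.  On the component $D_{\phi,p,q}$ containing the diagonal, the underlying pointed stable map in $X$ is rigidly fixed, so after the natural trivialization the domain becomes $\PP^1$ with $p', q'$ identified with $p, q$; only the lift $\phi': \PP^1 \to W$ of $f \circ \phi$ varies, subject to $\phi'(p) = \phi(p)$.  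This is exactly the moduli problem presenting $\mc{S}_1$.

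Next I would apply Lemma~\ref{lem_relative_def_fix_pt}: the tangent space at $(\phi,p,q)$ is $H^0(\PP^1, \phi^*T_f(-1))$, and smoothness together with the dimension assertion follow once $H^1(\PP^1, \phi^*T_f(-1)) = 0$ is verified.  Since $(\phi,p,q)$ is general in $M_{1,2}$, the underlying $\phi$ is a general point of $M_1$, and Lemma~\ref{lem_unique_component} (applied with $V = W$, $E = T_f$) implies that $\phi^*T_f$ is globally generated with positive degree; twisting by $\OO(-1)$ preserves the vanishing of $H^1$, yielding the desired smoothness and dimension at $(\phi,p,q)$.

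For the final assertion, let $(\phi', p', q') \in M_{\phi,p,q} \cap \kspacenb{2}{W}{\beta}$.  The evaluation map $ev_1: M_{1,2} \to W$ is dominant, since a general $\phi \in M_1$ is free on $W$ (this follows from Hypotheses A and B of Setup~\ref{setup_main} via the exact sequence $0 \to \phi^*T_f \to \phi^*T_W \to \phi^*f^*T_X \to 0$) and sweeps out an open subset.  So for $(\phi,p,q)$ sufficiently general, $\phi(p) = \phi'(p')$ lies in a Zariski-open subset of $W$ through which every smooth rational curve is unobstructed, by Theorem II.3.11 of \cite{Kollar-ratcurves}.  Hence $H^1(\PP^1, \phi'^*T_W) = 0$ and $\phi'$ is a smooth point of $\kspace{0}{W}{\beta}$; since the forgetful morphism $\kspace{2}{W}{\beta} \to \kspace{0}{W}{\beta}$ is smooth over the smooth-domain locus, $(\phi',p',q')$ is a smooth point of $M_{1,2}$.

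The main obstacle I anticipate is the careful identification of $M_{\phi,p,q}$ with $\mc{S}_1$: one must check that fixing the $2$-pointed stable map $(f \circ \phi, p, q)$ genuinely rigidifies the domain and both marked points at the basepoint, so that the extra marked point $q$ contributes neither additional dimension nor obstruction beyond what Lemma~\ref{lem_relative_def_fix_pt} already controls.
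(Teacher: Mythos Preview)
Your proposal is correct and follows essentially the same route as the paper: identify the deformation problem with that of Lemma~\ref{lem_relative_def_fix_pt}, use global generation of $\phi^*T_f$ to kill $H^1(\PP^1,\phi^*T_f(-p))$, and for the last part invoke Koll\'ar II.3.11 via the fact that $\phi'(p')=\phi(p)$ is a general point of $W$. Your write-up is in fact slightly more careful than the paper's in justifying why a \emph{general} $\phi\in M_1$ has $\phi^*T_f$ globally generated (you cite Lemma~\ref{lem_unique_component} rather than simply ``Hypothesis~B''), and in spelling out why the second marked point $q$ contributes nothing extra once the underlying $2$-pointed map to $X$ is fixed.
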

\begin{proof}
We claim the tangent space to $M_{\phi,p,q}$ at $(\phi, p, q)$ is $H^0(\PP^1, \phi^*T_f(-p))$.  This follows from Lemma~\ref{lem_relative_def_fix_pt} as a tangent vector to $M_{\phi,p,q}$ at $(\phi, p, q)$ corresponds to an infinitesimal deformation of $\phi$ fixing $f \circ \phi$ and $\phi(p)$.

Since $[\phi] \in M_1$, the vector space $H^1(\PP^1, \phi^*T_f(-p))$ is zero by Hypothesis B.  This implies that $M_{\phi,p,q}$ is smooth at the point $(\phi, p, q)$ and the first claim follows.  To check the last statement, suppose $(\phi', p', q') \in M_{\phi, p, q} \cap \kspacenb{2}{W}{\beta}$.  The map $\phi'$ corresponds to a map from an irreducible curve and $\phi'(q_1') = q$ is a general point of $W$.  By Theorem II.3.11 of \cite{Kollar-ratcurves}, this map is free and the claim follows.
\end{proof}


Set $x = f(\phi(q))$.  The image of $M_{\phi, p, q}$ in $W$ is contracted by $h_x$ in the following sense.

\begin{lem}\label{lem_contraction}
The variety $C_{\phi, p, q}$ is positive dimensional.  Further, $h_x(C_{\phi,p, q} \cap U_x)$ is a point.
\end{lem}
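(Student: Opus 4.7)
The plan is to prove the two assertions separately: first, $\dim C_{\phi,p,q} \geq 1$ by producing an explicit nonzero tangent vector to the relevant evaluation, and second, that $h_x$ contracts $C_{\phi,p,q} \cap U_x$ by showing $C_{\phi,p,q}$ is everywhere tangent to the foliation $\mc{D}$. I will identify $D_{\phi,p,q}$ with $M_{\phi,p,q}$ via $pr_2$ (which is injective because the first coordinate is constant on $D_{\phi,p,q}$), so that $C_{\phi,p,q}$ is the image of $ev_2: M_{\phi,p,q} \to W$, $(\phi',p',q') \mapsto \phi'(q')$.

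For positive dimensionality, by Lemma~\ref{lem_describe_defs} the tangent space $T_{(\phi,p,q)} M_{\phi,p,q}$ is $H^0(\PP^1, \phi^*T_f(-p))$. Using Hypothesis B and Definition~\ref{def_pos_rk}, write $\phi^*T_f = Pos(\phi) \oplus \OO^a$ with $Pos(\phi)$ ample of rank $m_\phi \geq 1$; each summand of $Pos(\phi)(-p)$ then has nonnegative degree, so this twist is globally generated and admits a section $s$ nonvanishing at $q$. A direct computation of $d\,ev_2$ (using that $\delta p$ and $\delta q$ are forced to vanish because $f \circ \phi$ is immersive at general $p,q$) gives $d\,ev_2(s) = s(q) \in Pos(\phi)|_{\phi(q)} \setminus \{0\}$, so $C_{\phi,p,q}$ has positive dimension.

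For the contraction, I would carry out the analogous tangent-space calculation at a general smooth point $(\phi',p',q') \in M_{\phi,p,q}$, identifying $T_{(\phi',p',q')} M_{\phi,p,q}$ with $H^0(\phi'^*T_f(-p'))$. Decomposing $\phi'^*T_f = \bigoplus \OO(a_i)$ with $a_i \geq 0$, the twist $\phi'^*T_f(-p')$ has sections coming only from summands with $a_i \geq 1$, i.e.\ from $Pos(\phi')$; hence the image of $d\,ev_2$ lies inside $Pos(\phi')|_{\phi'(q')} \subseteq \mc{D}|_{\phi'(q')}$ by the construction of $\mc{D}$ (Corollary~\ref{cor_glue}). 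So $C_{\phi,p,q}$ is everywhere tangent to $\mc{D}$ at its smooth points; since $C_{\phi,p,q}$ is irreducible (image of the irreducible $D_{\phi,p,q}$), the open subvariety $C_{\phi,p,q} \cap U_x$ is irreducible and tangent to the regular foliation $\mc{D}|_U$, hence contained in a single leaf. By Theorem~\ref{thm-foliation} the leaves of $\mc{D}|_U$ are the fibers of $h$, so $h_x(C_{\phi,p,q} \cap U_x)$ is a single point.

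The main obstacle is justifying the tangent-space identification at a non-diagonal point $(\phi',p',q')$, especially when $\phi'$ is a reducible stable map on the boundary of $M_{1,2}$. This is precisely the situation Hypothesis C is designed to handle, giving the smoothness of $M_{\phi,p,q}$ at such points and unobstructedness of $ev_2$ (cf.\ Remark~\ref{rmk_weaken_hypothesisC}, which reduces Hypothesis C to the boundary). Once the tangent-space identification is in place, the splitting-type observation and standard foliation theory finish the argument.
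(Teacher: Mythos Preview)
Your argument is correct and follows essentially the same route as the paper: identify the tangent space to $M_{\phi,p,q}$ with $H^0(\PP^1,\phi^*T_f(-p))$, observe that sections of this bundle live in the ample summand $Pos(\phi)$, and conclude that the image under $ev_2$ is tangent to $\mc{D}$, hence lies in a leaf of the foliation. The paper checks this only at the diagonal point $(\phi,p,q)$ (implicitly using the symmetry $M_{\phi,p,q}=M_{\phi',p',q'}$ for $(\phi',p',q')$ in the same fiber component), whereas you spell it out at a general $(\phi',p',q')$; these are equivalent. For positive-dimensionality the paper instead does a dimension count, comparing $h^0(\phi^*T_f(-p-q))<h^0(\phi^*T_f(-p))$, but your explicit section works just as well.

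One correction: your invocation of Hypothesis~C is unnecessary and slightly misplaced. The lemma does not use Hypothesis~C at all --- tangency to $\mc{D}$ at the dense set of points of $C_{\phi,p,q}\cap U_x$ coming from irreducible-domain curves already forces tangency everywhere on the smooth locus, and that suffices. (Note also that the claim ``$a_i\geq 0$'' for $\phi'^*T_f$ is not immediate from freeness of $\phi'$ on $W$, but you do not actually need it: sections of $\phi'^*T_f(-p')$ land in $Pos(\phi')$ regardless of whether some $a_i$ are negative.) Hypothesis~C only enters later, in the proof of Theorem~\ref{thm_rel_vf}, to control the dimension of $\Psi^{-1}(z)$ over boundary points.
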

\begin{proof}
If the image of $M_{\phi, p, q}$ were a point in $W_x$, then each point of $M_{\phi, p, q}$ would pass through $\phi(q)$.  This is impossible though as $\dim H^0(\PP^1, \phi^*T_f(-p - q)) < \dim M_{\phi, p , q}$.  In other words, the subvariety parameterizing maps $(\phi', p', q')$ with $\phi(q) = \phi(q')$ must have dimension smaller that $M_{\phi, p, q}$ by considering the dimension of the tangent spaces.

The second claim will follow if we show that the tangent space to $C_{\phi,p, q}$ at the image of the point $(\phi,p,q)$ is contained in $\mc{D}$.  A tangent vector to $C_{\phi,p,q}$ at $\phi(q)$ is identified with an element of the vector space $H^0(\PP^1, \phi^*T_f(-p))$ inside $T_{W,{\phi(q)}}$.  The bundle $\phi^*T_f(-p)$ splits into a direct sum of line bundles, and we identify this with $\mc{E} \oplus \OO(-1)^a$.  In other words the tangent vector, considered as a global section of $\phi^*T_f|_q = (\mc{E} \oplus \OO^a)|_q$ is contained in the positive piece, $\mc{E}$.  By definition, this element is in $\mc{D}$.
\end{proof}

The Lemma shows that there is a supply of (possibly affine) curves in the fibers of $h_x$.  Namely any curve on a $C_{\phi, p, q}$ which meets $U_x$. We wish to show that one of these curves in $C_{\phi,q, x}$ (and so the general one), is actually contained in $U_x$ (see Proposition~\ref{prop_contract_curve_ample}).  Given these preliminaries, we can prove the main result.

\begin{thm}\label{thm_rel_vf}
Suppose we are in the situation of Setup~\ref{setup_main} and that, in addition, Hypothesis C holds.  Then there is a map $\phi': \PP^1 \rightarrow W$ such that $\phi'$ is relatively very free.
\end{thm}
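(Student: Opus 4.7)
I plan to argue by contradiction, assuming $m_{max} < m - n$. Then the foliation $\mc{D}$ of Corollary~\ref{cor_glue} is a proper subbundle of $T_f$ on the open set $U \subset W$ of Theorem~\ref{thm-foliation}, and its complement $Z = W \setminus U$ has codimension at least two. By Proposition~\ref{prop_contract_curve_ample}, it suffices to exhibit, for some $x \in X$, a complete curve in $U_x$ contracted by $h_x$. The plan is to show that for a general $(\phi, p, q) \in R$, the proper variety $C_{\phi, p, q} = \Psi(D_{\phi, p, q})$ --- proper because $D_{\phi, p, q}$ is a closed subvariety of the proper Kontsevich fiber product $\mc{M}$ --- is positive dimensional and entirely contained in $U_x$ with $x = f(\phi(q))$. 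Given this, Lemma~\ref{lem_contraction} says $h_x$ contracts $C_{\phi, p, q}$, and any projective curve cut out inside the positive-dimensional projective variety $C_{\phi,p,q}$ furnishes the required complete curve in $U_x$ contracted by $h_x$.

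To show $C_{\phi, p, q} \subset U_x$, I would assume otherwise and derive a contradiction. If $C_{\phi, p, q} \cap Z$ is nonempty for a Zariski-dense set of $(\phi, p, q) \in R$, then $\Psi^{-1}(Z) \subset \mc{M}$ dominates $M_{1,2}$ under $pr_1$. Invoking Lemma~\ref{lem_dim_count}(v), there is a point $z \in Z$ at which the fiber dimension jumps by one, $\dim \Psi^{-1}(z) \geq 2(-K_W \cdot \beta) + K_X \cdot \alpha$, with $pr_1(\Psi^{-1}(z))$ containing a general point of $M_{1,2}$. Pick such a general $(\phi, p, q)$ and any $(\phi', p', q') \in D_{\phi, p, q}$ with $\phi'(q') = z$; Conditions~\ref{conditions} hold by construction of $\mc{M}$ and $z$ lies over $f(\phi(q))$, so Hypothesis C applies and $(\phi', p', q')$ is unobstructed for $ev_2$. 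Smoothness of $ev_2$ at $(\phi', p', q')$ forces the $ev_2$-fiber through this point to have the expected dimension $\dim M_{1,2} - m = -K_W \cdot \beta - 1$. Factoring $\Psi = ev_2 \circ pr_2$ via the fiber product $\mc{M} = M_{1,2} \times_F M_{1,2}$, the local dimension of $\Psi^{-1}(z)$ at $((\phi, p, q), (\phi', p', q'))$ is at most the $ev_2$-fiber dimension plus the $pr_2$-fiber dimension at $(\phi', p', q')$; the latter is generically $\dim M_{1,2} - \dim F = -K_W \cdot \beta + K_X \cdot \alpha$. Summing yields the expected $2(-K_W \cdot \beta) + K_X \cdot \alpha - 1$, contradicting Lemma~\ref{lem_dim_count}(v).

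The hard part is closing this final dimension estimate rigorously: Hypothesis C directly controls only the $ev_2$-fiber, but to get a contradiction one also needs the $pr_2$-fiber at $(\phi',p',q')$ to attain its generic dimension rather than jump. This should follow from a deformation-theoretic analysis of the classifying map $M_{1,2} \to F$ in the spirit of Lemmas~\ref{lem_relative_def} and \ref{lem_relative_def_fix_pt}, using that $(\phi, p, q)$ was chosen general in $pr_1(\Psi^{-1}(z))$ and that Hypothesis A makes $\phi^* f^* T_X$ globally generated, so the relevant $H^1$'s vanish. A related auxiliary point to verify is that when $C_{\phi,p,q}$ happens to be an irreducible curve rather than a higher-dimensional variety, the inclusion ``$C_{\phi,p,q} \subset U_x$'' literally means $C_{\phi,p,q} \cap Z_x = \emptyset$, which is precisely the disjointness the contradiction above establishes.
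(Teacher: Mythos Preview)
Your proposal is correct and follows essentially the same route as the paper's proof: reduce via Proposition~\ref{prop_contract_curve_ample} to producing a complete curve in $U_x$, use the proper images $C_{\phi,p,q}$ as candidates, and if these always meet $Z$ invoke Lemma~\ref{lem_dim_count}(v) to force a fiber $\Psi^{-1}(z)$ of excess dimension, then derive a contradiction by bounding the tangent space of $\Psi^{-1}(z)$ at a well-chosen point $\mathbf{p}=((\phi,p,q),(\phi',p',q'))$. The paper carries out your ``hard part'' exactly as you anticipate: it factors $\Psi^{-1}(z)$ through $pr_2$ onto its image $S \subset ev_2^{-1}(z)$, uses Hypothesis~C to see $S$ is smooth of the expected dimension, and then shows the $pr_2$-fiber is smooth at $\mathbf{p}$ by identifying its tangent space with $H^0(\PP^1,\phi^*T_f(-p))$ and its obstruction space with $H^1(\PP^1,\phi^*T_f(-p))$, invoking Lemma~\ref{lem_relative_def_fix_pt} just as you suggest.

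One small correction: the vanishing of $H^1(\PP^1,\phi^*T_f(-p))$ that controls the $pr_2$-fiber does not come from Hypothesis~A (which concerns $\phi^*f^*T_X$) but from the global generation of $\phi^*T_f$, i.e.\ from the first factor $(\phi,p,q)$ lying in $R$. This is precisely why the paper first passes to an open subset $H\subset\Psi^{-1}(z)$ with $pr_1(H)\subset R$ before running the tangent-space computation; you already note that $(\phi,p,q)$ can be taken general in $pr_1(\Psi^{-1}(z))$, so this is only a matter of citing the correct hypothesis.
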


\begin{proof}
By Proposition~\ref{prop_contract_curve_ample}, the Theorem follows if there is a complete curve in the fiber of $h_x: U_x \rightarrow Y_x$ for a general $x \in X$.  Assume, by way of contradiction, that this is not the case.  Then no complete curve on $C_{\phi,p,q}$ can be contained in $U_{f(\phi(q))}$ by Lemma~\ref{lem_contraction}.  This implies that $\Psi^{-1}(Z)$ maps dominantly to $M_{1,2}$ via $pr_1$.  Otherwise, for a general point $(\phi, p, q) \in M_{1,2}$, any complete curve on $\Psi(pr_1^{-1}(\phi, p , q)) = C_{\phi, p , q}$ would miss $Z$.

By Lemma~\ref{lem_dim_count} then, there is a point $z \in Z$ such that $\Psi^{-1}(z)$ has dimension at least $2(-K_W \cdot \beta) + K_X \cdot \alpha$ (one more than expected) and such that $pr_1(\Psi^{-1}(z))$ contains a general point of $M_{1,2}$.  Taking an open set $H$ in $\Psi^{-1}(z)$, we may assume that each point of $pr_1(H)$ is general in the sense that for each $\phi \in pr_1(H)$ the sheaf $\phi^*T_f$ is globally generated.

Consider the restriction of the second projection map, $pr_2|_H: H \rightarrow M_{1,2}$.  Let $\textbf{p} = (\phi, p, q, \phi', p', q')$ be a point of $H$.  We claim that $pr_2|_H$ is smooth at $\textbf{p}$.  Let $s = pr_2(\textbf{p})$.  Note that the image of $pr_2|_H$ may be identified with an open set of $s$ inside the fiber of $ev_2: M_{1,2} \rightarrow W$ over $z$.  Let $S \subset ev_2^{-1}(z)$ be this image.  By Hypothesis $C$, $S$ is smooth.

The fiber of $pr_2|_H$ over $s = (\phi',p',q')$ may be identified with a $\PP^1$ bundle over the points $(\phi, p)$ of $M_{1,1}$ such that the image of $\phi$ and of $\phi'$ agree in $X$ and such that $\phi(p) = \phi'(p')$.  Using this description, a tangent vector to the fiber of $pr_2|_H$ at $\textbf{p}$ is identified with an element of $H^0(\PP^1, \phi^*T_W)/H^0(\PP^1, T_\PP^1(p + q))$ such that the image in $H^0(\PP^1, \phi^*f^*T_X)/H^0(\PP^1, T_\PP^1(p + q))$ is zero and the image in $H^0(\PP^1, \phi^*T_W|_p)$ is zero.  In other words, the tangent vectors to the fiber may be identified with elements of $H^0(\PP^1, \phi^*T_f(-p))$.  This deformation problem may be formulated as in Lemma~\ref{lem_relative_def_fix_pt}.  The obstruction to lifting infinitesimal deformations of this problem are contained in the vector space $H^1(\PP^1, T_f(-p))$.  By construction, $(\phi, p, q)$ is a general point of $M_{1,2}$ in the sense that $\phi^*T_f$ is globally generated and so $H^1(\PP^1, T_f(-p)) = 0$.  This implies that the fiber is smooth at $\textbf{p}$.

The conclusions of the above argument may be rephrased by saying that there is a short exact sequence $$0 \rightarrow K \rightarrow T_{\Psi^{-1}(z)} \rightarrow pr_2^*T_S \rightarrow 0$$

which, when restricted to the point $\textbf{p}$, gives the short exact sequence
$$0 \rightarrow H^0(\PP^1, \phi^*T_f(-1)) \rightarrow T_{\Psi^{-1}(z), \textbf{p}} \rightarrow H^0(\PP^1, \phi^*T_W(-q))/H^0(\PP^1, T_{\PP^1}(p + q)) \rightarrow 0.$$

This implies that the tangent space to $T_{\Psi^{-1}(z)}$ at $\textbf{p}$ has dimension $h^0(\PP^1, \phi^*T_f(-1)) + h^0(\PP^1, \phi^*T_W(-1)) - 1 = 2(-K_W \cdot \beta) + K_X \cdot \alpha - 1$.  Thus there can be no point $z \in Z$ such that $\dim(\Psi^{-1}(z)) = 2(-K_W \cdot \beta) + K_X \cdot \alpha$.  This contradiction finishes the proof of the Theorem.
\end{proof}

\section{Preliminaries on Twisting Surfaces}\label{section_twisting}

Let $X \subset \PP^n$ be a smooth projective variety.  Let $F(X) = \kspace{0}{X}{1}$ be the Fano scheme of lines on $X$ (embedded as a subscheme of the Grassmannian).  Let $\mc{C}$ be the universal line on $X$.

Let $\Sigma = \PP^1 \times \PP^1$ and let $\pi: \Sigma \rightarrow \PP^1$ be the first projection map.  Denote by $F$ the class of a fiber and $F'$ the class of a square zero section on $\Sigma$.  If $f: \Sigma \rightarrow X$ is a morphism, we may consider the associated map $(\pi, f): \Sigma \rightarrow \PP^1 \times X$.  The normal sheaf of this map will be denoted $\mc{N}_f$.  The following definition is taken from \cite{dJS}, Section 7.

\begin{defn}
Suppose $\Sigma = \PP^1 \times \PP^1$ and $\pi: \Sigma \rightarrow \PP^1$ is the first projection map.  For an integer $m > 0$ a map $f: \Sigma \rightarrow X$ is an \textit{$m$-twisting surface} on $X$ if
\begin{enumerate}
\item The sheaf $f^* T_X$ is globally generated.
\item The map $(\pi,f)$ is finite and $H^1(\Sigma, \mc{N}_f(-F' - mF)) = 0$.
\end{enumerate}
\end{defn}

We will study this condition in order to rephrase it in terms of maps $\PP^1$ to the universal line $\mc{C}$ on $X$.

A map $f: \PP^1 \rightarrow F(X)$ determines and is determined by a diagram
\begin{equation}\label{diag_surf}
\xymatrix{
\Sigma \ar[d]^{\pi'} \ar[r]^{f'} \ar@/^1.5pc/[rr]^h & \mc{C} \ar[r]^{ev} \ar[d]^{\pi} & X \\
\PP^1 \ar[r]^f & F(X) }
\end{equation}
where $\Sigma \cong \PP^1 \times_{F(X)} \mc{C}$ is a $\PP^1$-bundle over $\PP^1$, $h$ is the composition of the two horizontal maps, and $h$ maps fibers of $\pi'$ to lines on $X$.

Analogously, the data of a map $f: \PP^1 \rightarrow \mc{C}$ is equivalent to a similar diagram, but with a section, $\sigma$, of the map $\pi'$:
$$
\xymatrix{
\Sigma \ar[d]^{\pi'} \ar[r]^h & X \\
\PP^1 \ar@/^1pc/[u]^{\sigma} &.
}
$$
Here, each fiber of $\pi'$ is a $\PP^1$ mapped to a line on $X$, and $\sigma$ is a section of $\pi'$.  From now on we replace $\pi'$ with $\pi$ and trust no confusion will result.

\begin{defn}
A map $f:\PP^1 \rightarrow \mc{C}$ as above is a \textit{family of pointed lines}.
\end{defn}

The discussion above implies that a family of pointed lines $f: \PP^1 \rightarrow \mc{C}$ is equivalent to the data $(\Sigma, \pi, \sigma, h)$.

\begin{rmk}
The obstruction space to a map $g: \PP^1 \rightarrow X$ is given by $\mathbb{E}xt^2(g^*\Omega_X \rightarrow \Omega_{\PP^1}, \OO_{\PP^1})$.  Here $g^*\Omega_X \rightarrow \Omega_{\PP^1}$ is the relative cotangent complex for $g$, denoted $L_g$.  Using the long exact sequence for hyperext and the fact that $Ext^2(\Omega_{\PP^1}, \OO_{\PP^1}) = Ext^1(\Omega_{\PP^1}, \OO_{\PP^1}) = 0$, we conclude that
$$\mathbb{E}xt^2(L_g, \OO_{\PP^1}) \cong Ext^1(g^*\Omega_X, \OO_{\PP^1}) \cong H^1(\PP^1, g^*T_X).$$

Using this description, we see that for a map $f: \PP^1 \rightarrow \mc{C}$, the condition that $\mathbb{E}xt^2(L_{ev \circ f}, \OO_{\PP^1}) = 0$ is equivalent to the condition that $(ev \circ f)^* T_X$ splits as $\bigoplus \OO(a_i)$ with each $a_i \geq -1$ (in other words, that the section of the associated $\pi: \Sigma \rightarrow \PP^1$ is mapped to an unobstructed curve on $X$).  Similarly, if $\mathbb{E}xt^2(L_{f_t}, \OO_{\PP^1}) = 0$ for each $t \in \PP^1$, then each fiber of $\pi$ corresponds to a smooth point of $F(X)$.
\end{rmk}

We are actually interested in a stronger positivity condition.

\begin{defn}
A family of pointed lines $f: \PP^1 \rightarrow \mc{C}$ will be called
\begin{enumerate}
\item \textit{Section free} if $(ev \circ f)^* T_X$ splits as $\bigoplus \OO(a_i)$ with each $a_i \geq 0$.
\item \textit{Fiberwise free} if $h_t: \Sigma_t \rightarrow X$ is free for each $t \in \PP^1$.
\end{enumerate}
\end{defn}

Observe that if $f:\PP^1 \rightarrow \mc{C}$ is a family of pointed lines and the map $(\pi,h)$ is an embedding, then sheaf $\mc{N}_f$ is a vector bundle.  In this case, the sheaf $\mc{N}_f$ is $\pi$-flat; see \cite{dJS} Lemma 7.1.

\begin{lem}
If $f:\PP^1 \rightarrow \mc{C}$ is a family of pointed lines which is fiberwise free, then $R^1\pi_*\mc{N}_f(-\sigma) = 0$.
\end{lem}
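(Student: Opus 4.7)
The plan is to compute $R^1 \pi_* \mc{N}_f(-\sigma)$ fiber-by-fiber on $\PP^1$ and reduce to showing that the relevant $H^1$ vanishes on every fiber of $\pi$. Since $\pi$ has one-dimensional fibers, $R^i \pi_* = 0$ for $i \geq 2$ automatically, so the cohomology and base change theorem applies: provided $\mc{N}_f(-\sigma)$ is $\pi$-flat, the vanishing of $H^1(\Sigma_t, \mc{N}_f(-\sigma)|_{\Sigma_t})$ for every $t \in \PP^1$ will force $R^1\pi_* \mc{N}_f(-\sigma) = 0$. For any family of pointed lines the map $(\pi, h)$ is automatically a closed immersion---each fiber $\Sigma_t$ maps isomorphically onto a line $L_t \subset X$, so $(\pi,h)$ is both injective and unramified, and $\Sigma$ is proper---so by the remark preceding the lemma $\mc{N}_f$ is a $\pi$-flat vector bundle and twisting by the Cartier section $\sigma$ preserves flatness.

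The central computation will be the fiberwise identification $\mc{N}_f|_{\Sigma_t} \cong N_{L_t/X}$, where $L_t = h_t(\Sigma_t)$ is the line corresponding to $f(t) \in F(X)$. Inside $\PP^1 \times X$, the subvarieties $(\pi, h)(\Sigma)$ and $\{t\} \times X$ meet transversally along $\Sigma_t$, because $T_\Sigma$ surjects onto $\pi^* T_{\PP^1}$ via $d\pi$ while $\{t\} \times X$ supplies the full $T_X$-direction. Consequently the normal bundle restricts as
\[
\mc{N}_f|_{\Sigma_t} \;\cong\; N_{\Sigma_t/(\{t\} \times X)} \;\cong\; N_{L_t/X},
\]
the usual normal bundle of the line $L_t$ in $X$.

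With that identification in hand, the fiberwise free hypothesis gives $h_t^* T_X = \bigoplus_i \OO(a_i)$ with all $a_i \geq 0$, and the short exact sequence $0 \to T_{\Sigma_t} \to h_t^* T_X \to N_{L_t/X} \to 0$ exhibits $N_{L_t/X}$ as a quotient of a globally generated bundle on $\PP^1$. Hence $N_{L_t/X} = \bigoplus_j \OO(b_j)$ with all $b_j \geq 0$. The section $\sigma$ meets $\Sigma_t$ in a single reduced point, so $\mc{N}_f(-\sigma)|_{\Sigma_t} \cong N_{L_t/X}(-1) = \bigoplus_j \OO(b_j - 1)$ has every summand of degree at least $-1$; therefore $H^1(\PP^1, N_{L_t/X}(-1)) = 0$ for each $t$, and the lemma follows by base change.

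The main obstacle I anticipate is the transversality/normal-bundle identification $\mc{N}_f|_{\Sigma_t} \cong N_{L_t/X}$. Once that is in place, the remaining splitting-type computation on $\PP^1$ and the base change invocation are routine.
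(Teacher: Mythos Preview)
Your proposal is correct and follows essentially the same approach as the paper: identify $\mc{N}_f|_{\Sigma_t}$ with the normal bundle $N_{L_t/X}$ of the line in $X$, use fiberwise freeness to get $H^1(\Sigma_t,\mc{N}_f(-\sigma)|_{\Sigma_t})=0$, and conclude by base change. The paper's proof is simply a terser version of yours, invoking that $\pi$ is a submersion to get the restriction identity $\mc{N}_f|_{\Sigma_t}\cong h_t^*T_X/T_{\Sigma_t}$ in one line rather than via your transversality argument in $\PP^1\times X$.
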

\begin{proof}
As $\pi$ is a submersion, $$\mc{N}_f|_{\Sigma_t} \cong h_t^*T_X/T_{\Sigma_t} \cong N_{\Sigma_t/X}$$ for each $t \in \PP^1$.  Since $h_t(\Sigma_t)$ is a free line on $X$, $H^1(\Sigma_t, \mc{N}_f(-\sigma)|_{\Sigma_t}) = 0$.  This implies the claim.
\end{proof}

\begin{notat}
Denote by $\mc{C}_{ev}$ the inverse image under $\pi: \mc{C} \rightarrow F(X)$ of all points in $F(X)$ corresponding to free lines.  Let $T_{ev}$ be the dual of the sheaf of relative differentials for the map $ev|_{\mc{C}_{ev}}$.
\end{notat}

\begin{prop}\label{rel-ev}
If $f: \PP^1 \rightarrow \mc{C}$ is a family of pointed lines which is fiberwise free, then $f^*T_{ev} = \pi_*(\mc{N}_f(-\sigma))$.
\end{prop}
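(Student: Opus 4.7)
The plan is to identify both $f^*T_{ev}$ and $\pi_*\mc{N}_f(-\sigma)$ with a common higher direct image $R^1\pi_*(f')^*T_{ev}(-\sigma)$. Write $\bar f = \pi \circ f: \PP^1 \to F(X)$ and $f': \Sigma \to \mc{C}$ for the induced map on the cartesian square $\Sigma = \PP^1 \times_{F(X)} \mc{C}$, so that $f = f' \circ \sigma$ and $h = ev \circ f'$.

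The key input is a short exact sequence on $\Sigma$,
\[
0 \to (f')^*T_{ev} \to \pi^*\bar f^*T_{F(X)} \to \mc{N}_f \to 0, \qquad (\dagger)
\]
derived as follows. On $\mc{C}_{ev}$ the snake lemma, applied to the smoothness sequence $0 \to T_\pi \to T_\mc{C} \to \pi^*T_{F(X)} \to 0$ and the surjection $T_\mc{C} \to ev^*T_X$ with kernel $T_{ev}$, produces $0 \to T_{ev} \to \pi^*T_{F(X)} \to ev^*T_X/T_\pi \to 0$; here the composite $T_\pi \to ev^*T_X$ is fiberwise the inclusion $T_{\ell,p} \hookrightarrow T_{X,p}$, hence injective with locally free quotient. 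Pulling back via $f'$ and using the identification $h^*T_X/T_\pi^\Sigma \cong \mc{N}_f$ (obtained by collapsing the $\PP^1$-direction in $0 \to T_\Sigma \to \pi^*T_{\PP^1} \oplus h^*T_X \to \mc{N}_f \to 0$) yields $(\dagger)$.

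Twisting $(\dagger)$ by $\OO_\Sigma(-\sigma)$ and applying $\pi_*$, the projection formula together with $H^i(\PP^1, \OO(-1)) = 0$ kill both $\pi_*$ and $R^1\pi_*$ of $\pi^*\bar f^*T_{F(X)}(-\sigma)$, while $R^1\pi_*\mc{N}_f(-\sigma) = 0$ by the preceding lemma (which uses fiberwise freeness). The long exact sequence thus collapses to a canonical isomorphism $\pi_*\mc{N}_f(-\sigma) \xrightarrow{\sim} R^1\pi_*(f')^*T_{ev}(-\sigma)$.

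It remains to identify the right hand side with $f^*T_{ev}$. Restricting $(\dagger)$ to a fiber $\Sigma_t \cong \ell_t$ expresses $T_{ev}|_{\ell_t}$ as the kernel of the evaluation map $H^0(\ell_t, N_{\ell_t/X}) \otimes \OO_{\ell_t} \to N_{\ell_t/X}$, and for a free line with $N_{\ell_t/X} = \bigoplus \OO_{\ell_t}(a_i)$, $a_i \geq 0$, this kernel is the familiar $\OO_{\PP^1}(-1)^{\sum a_i}$. In particular $H^0$ and $H^1$ of $T_{ev}|_{\ell_t}$ both vanish, so $\pi_*(f')^*T_{ev} = R^1\pi_*(f')^*T_{ev} = 0$. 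Pushing forward the ideal sheaf sequence $0 \to (f')^*T_{ev}(-\sigma) \to (f')^*T_{ev} \to \sigma_*f^*T_{ev} \to 0$ and using $\pi_*\sigma_* = \mathrm{id}_{\PP^1}$ yields $f^*T_{ev} \xrightarrow{\sim} R^1\pi_*(f')^*T_{ev}(-\sigma)$, and composing with the earlier isomorphism completes the proof. The main obstacle is the snake-lemma derivation of $(\dagger)$; once this is in place the remaining cohomological bookkeeping is routine.
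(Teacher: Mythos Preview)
Your proof is correct and takes a genuinely different route from the paper's.

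The paper never writes down your sequence $(\dagger)$. Instead it identifies $f^*T_{\mc{C}}$ directly via deformation theory as $\mathrm{Coker}\bigl(\pi_*T_\pi(-\sigma) \to \pi_*h^*T_X\bigr)$, pushes forward the twisted sequence $0 \to T_\pi(-\sigma) \to h^*T_X(-\sigma) \to \mc{N}_f(-\sigma) \to 0$, and then assembles a $3\times 3$ commutative diagram comparing this with the defining sequence $0 \to f^*T_{ev} \to f^*T_{\mc{C}} \to f^*ev^*T_X \to 0$. Everything lives on the base $\PP^1$; no $R^1\pi_*$ terms appear.

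Your argument trades that diagram chase for the single sheaf-theoretic input $(\dagger)$ on $\Sigma$, plus the observation that $T_{ev}|_{\ell}$ is the Lazarsfeld bundle of $N_{\ell/X}$, hence a sum of $\OO(-1)$'s on a free line. This is arguably cleaner: once $(\dagger)$ is in hand, the middle term vanishes under both $\pi_*$ and $R^1\pi_*$ after the twist, and the rest is mechanical. The paper's approach, by contrast, avoids any appeal to the fiberwise structure of $T_{ev}$ and stays closer to the deformation-theoretic meaning of $T_{\mc{C}}$, which may make it easier to adapt to settings where the fibers of $ev$ are not so explicitly understood. Both are valid; yours is shorter and more functorial, the paper's is more hands-on.
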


\begin{proof}
We will continue to denote the map $\Sigma \rightarrow X$ by $h$.

For any $[p \in l] \in Im(f) \subseteq \mc{C}$ we have that
$$
T_{\mc{C}}|_{[p,l]} = \text{Coker } \left( H^0(l, T_l(-p)) \rightarrow H^0(l, ev^*T_X|_l) \right).
$$
This follows from the corresponding sequence for hyper-Ext discussed above and the fact that $H^1(l, T_l(-p)) = 0$.  Thus,
$$
f^*T_{\mc{C}} = \text{Coker } \left(\pi_*T_{\pi}(-\sigma) \rightarrow \pi_*h^*T_X\right).
$$
Twisting down the sequence
$$
0 \rightarrow T_{\pi} \rightarrow T_{\Sigma} \rightarrow \pi^*T_{\PP^1} \rightarrow 0
$$
by $\sigma$ and pushing it forward by $\pi$, implies that
$$
\pi_*T_{\pi}(-\sigma) \cong \pi_* T_{\Sigma}(-\sigma)
$$
because $\pi_* \pi^* (T_{\PP^1}(-\sigma)) = 0$.  Using the exact sequence
$$
0 \rightarrow T_{\pi} \rightarrow h^*T_X \rightarrow \mc{N}_f \rightarrow 0
$$
and computing that $R^1\pi_*(T_{\pi}(-\sigma)) = 0$, we conclude that
$$
0 \rightarrow \pi_*T_{\pi}(-\sigma) \rightarrow \pi_*h^*T_X(-\sigma) \rightarrow \pi_* \mc{N}_f(-\sigma) \rightarrow 0
$$
is a short exact sequence.  Putting this all together and using $R^1\pi_*h^*T_X(\sigma) = 0$ gives
$$
\xymatrix{
& & 0 & 0 \\
& & h^*T_X|_{\sigma} \ar[u] \ar@{=}[r] & f^*ev^*T_X \ar[u]\\
0 \ar[r] & \pi_*(T_{\pi}(-\sigma)) \ar[r] \ar@{=}[d] & \pi_*h^*T_X \ar[u] \ar[r] & f^*T_{\mc{C}} \ar[r] \ar[u] & 0 \\
0 \ar[r] & \pi_*(T_{\pi}(-\sigma)) \ar[r] & \pi_*h^*T_X(-\sigma) \ar[u] \ar[r] & \pi_*\mc{N}_f(-\sigma) \ar[u] \ar[r] & 0\\
& & 0 \ar[u] & 0 \ar[u] &.}
$$
Since there is also the short exact sequence
$$
0 \rightarrow f^*T_{ev} \rightarrow f^*T_{\mc{C}} \rightarrow f^*ev^*T_X \rightarrow 0,
$$
we conclude that $f^*T_{ev} \cong \pi_*\mc{N}_f(-\sigma)$.
\end{proof}

We are now able to rephrase the cohomological properties satisfied by an $m$-twisting surface.

\begin{cor}
Let $\Sigma = \PP^1 \times \PP^1$ be a ruled surface and $h: \Sigma \rightarrow X$ a map so that $\Sigma \rightarrow \PP^1 \times X$ is an embedding such that the fibers of $\pi: \Sigma \rightarrow \PP^1$ are mapped to free lines on $X$.  Let $\sigma$ be a square zero section and $f: \PP^1 \rightarrow \mc{C}$ the induced map.  Then $f$ is an $m$-twisting surface on $X$ if $H^1(\PP^1, f^*T_{ev}(-m)) = 0$.  Conversely, an $m$-twisting surface whose fibers map to lines on $X$ corresponds to a family of lines which is fiberwise free and with $H^1(\PP^1, f^*T_{ev}(-m)) = 0$.
\end{cor}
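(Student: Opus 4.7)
The corollary is essentially a translation between cohomology on $\Sigma$ and cohomology on the base $\PP^1$ via Leray, using the two structural results (Proposition~\ref{rel-ev} and the preceding lemma $R^1\pi_*\mc{N}_f(-\sigma)=0$). The plan is to carry out this translation carefully in both directions.

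\textbf{Step 1 (identification of divisor classes and pushforward setup).} Observe that in the square-zero ruled surface $\Sigma = \PP^1\times\PP^1$, the class $F'$ of a square-zero section can be taken to be $\sigma$, and $F$ is the class of a $\pi$-fiber. Hence
\[
\mc{N}_f(-F'-mF)\;=\;\mc{N}_f(-\sigma)\otimes\pi^*\OO_{\PP^1}(-m).
\]
The whole argument is then to compute $H^1(\Sigma,\mc{N}_f(-\sigma)\otimes\pi^*\OO(-m))$ by Leray.

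\textbf{Step 2 (vanishing of $R^1\pi_*$).} I assume from the start that fibers of $\pi$ map to free lines on $X$ (this is the hypothesis on the left, and on the right it follows because any $m$-twisting surface has $h^*T_X$ globally generated, hence its restriction to each fiber is globally generated on $\PP^1$ — so a fiber cannot map to a non-free line). By the lemma just before Proposition~\ref{rel-ev}, fiberwise freeness gives $R^1\pi_*\mc{N}_f(-\sigma)=0$, and by the projection formula
\[
R^1\pi_*\bigl(\mc{N}_f(-\sigma)\otimes\pi^*\OO(-m)\bigr)\;=\;R^1\pi_*\mc{N}_f(-\sigma)\otimes\OO(-m)\;=\;0.
\]

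\textbf{Step 3 (identification of $\pi_*$).} Proposition~\ref{rel-ev} says $\pi_*\mc{N}_f(-\sigma)=f^*T_{ev}$, so again by the projection formula
\[
\pi_*\bigl(\mc{N}_f(-\sigma)\otimes\pi^*\OO(-m)\bigr)\;=\;\pi_*\mc{N}_f(-\sigma)\otimes\OO(-m)\;=\;f^*T_{ev}(-m).
\]

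\textbf{Step 4 (Leray and conclusion).} Since $R^1\pi_*$ of the sheaf of interest vanishes, the Leray spectral sequence collapses to
\[
H^1\bigl(\Sigma,\mc{N}_f(-F'-mF)\bigr)\;=\;H^1\bigl(\PP^1,f^*T_{ev}(-m)\bigr),
\]
which is exactly the equivalence of the two cohomological conditions. The remaining hypotheses in the definition of an $m$-twisting surface are immediate in this correspondence: the hypothesis that $(\pi,h)\colon\Sigma\hookrightarrow\PP^1\times X$ is an embedding subsumes the finiteness condition in both directions, and the global generation of $h^*T_X$ on $\Sigma$ (for the ``if'' direction) is to be carried along as a hypothesis of the surface — which in applications is verified directly when the twisting surfaces are constructed. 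The main potential obstacle is the bookkeeping of pushforwards, which is settled entirely by the lemma and proposition already in place.
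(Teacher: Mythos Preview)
Your argument is correct and is precisely the paper's approach: the paper's proof reads ``The first statement follows from the Proposition and an application of the Leray spectral sequence. The second statement follows from the fact that free curves on $\Sigma$ map to free curves on $X$; see \cite{dJS} Lemma 7.4. Then such a $\Sigma$ corresponds to a family of free lines on $X$ and the Proposition may be applied.'' Your Steps~1--4 unpack exactly this Leray computation, and your justification for fiberwise freeness in the converse direction (global generation of $h^*T_X$ restricts to fibers) is the content of the cited Lemma~7.4; your caveat about carrying global generation as a side hypothesis in the ``if'' direction matches the paper's own level of care.
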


\begin{proof}
The first statement follows from the Proposition and an application of the Leray spectral sequence.  The second statement follows from the fact that free curves on $\Sigma$ map to free curves on $X$; see \cite{dJS} Lemma 7.4.  Then such a $\Sigma$ corresponds to a family of free lines on $X$ and the Proposition may be applied.
\end{proof}

\section{Application: Complete Intersections}\label{section_ci}

\subsection{Lines and 2-Planes on Complete Intersections}

Let $X \subset \PP^n$ be a smooth complete intersection of type $\underline{d} = (d_1, \ldots, d_c)$ with each $d_i \geq 2$.  Suppose that $\dim(X) \geq 3$ so that $Pic(X) = \ZZ$.  We use $F(X)$ and $\mc{C}$ to respectively denote the schemes of lines and pointed lines on $X$.   These are projective varieties which we always considered to be embedded by the Pl\"ucker map.  There is a diagram
$$
\xymatrix{
\mc{C} \ar[r]^{ev} \ar[d]^{\pi} & X \\
F(X).
}
$$

Set $\textbf{d} = \Sigma_{i=1}^c d_i$.  For $x \in X$, denote by $F(X, x)$ the lines in $\PP^n$ which contain $x$ and are contained in $X$.  Equivalently, $F(X, x) = ev^{-1}(x)$.  A linearly embedded $\PP^2 \subset \PP^n$ will be referred to as a $2$-plane.  We record the following standard fact.

\begin{lem}\label{lem_2plane}
Given $X$ a smooth complete intersection as above.  A line on $F(X)$ may be identified with a $2$-plane $P$ contained in $X$ and a point $x \in P$.  For a general line $l \subset X$ and a point $x \in l$, there is an isomorphism between the space of lines on $F(X, x)$ through the point $[l]$ and the space of $2$-planes contained in $X$ and containing $l$.
\end{lem}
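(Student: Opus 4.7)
The plan is to reduce both statements to the classical description of lines on the Grassmannian $\mathbb{G} := \Gr(2,n{+}1)$ under the Pl\"ucker embedding. Recall that a line $L \subset \mathbb{G}$ is precisely a pencil of lines in $\PP^n$, i.e.\ the family of all lines in $\PP^n$ passing through a fixed point $x$ and lying in a fixed $2$-plane $P$ with $x \in P$. The flag variety $\Fl(1,3;n{+}1)$ of such pairs $(x,P)$ parametrizes the lines of $\mathbb{G}$ tautologically, and this identification is an isomorphism of schemes (it is the standard Fano variety of lines on $\mathbb{G}$).

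For the first assertion, let $L \subset F(X) \subset \mathbb{G}$ be a line. Viewed inside $\mathbb{G}$ it corresponds to a unique flag $(x, P)$. Since every member of $L$ is a line contained in $X$, and since the lines through $x$ in $P$ sweep out all of $P$, the plane $P$ must be contained in $X$. Conversely any flag $(x, P)$ with $P \subset X$ gives a pencil of lines contained in $X$, hence a line in $F(X)$. To upgrade this set-theoretic correspondence to an isomorphism of schemes, I would observe that $F(X)$ is cut out in $\mathbb{G}$ by the vanishing of the sections of $\Sym^{d_i}\mc{S}^\vee$ defined by the equations of $X$ (where $\mc{S}$ is the tautological bundle on $\mathbb{G}$); the locus of lines of $\mathbb{G}$ contained in $F(X)$ is the corresponding vanishing locus on $\Fl(1,3;n{+}1)$, which by the same argument coincides scheme-theoretically with the incidence variety $\{(x,P) : P \subset X\}$.

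For the second assertion, I would use that $F(X,x) = ev^{-1}(x)$ is naturally a subscheme of $F(X)$ via the isomorphism between $F(X,x)$ and the space of lines on $X$ through $x$. A line $L$ on $F(X,x)$ through the point $[l]$ is, by the first part, a pair $(x', P)$ with $P \subset X$, $x' \in P$, and $[l] \in L$, i.e.\ $l \subset P$. The condition that $L$ lies in $F(X,x)$ forces every member of the pencil to contain $x$, which means $x' = x$ (or one can simply note $x' \in l$ and then $x' = x$ for the appropriate choice of marked point); hence $x \in l \subset P$. The data thus reduces to a $2$-plane $P \subset X$ containing $l$, and the inverse map sends such a $P$ to the pencil of lines in $P$ through $x$.

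The main point to be careful about is that both correspondences are scheme-theoretic and not merely bijections on closed points; this follows from the universal properties of the Hilbert/Fano schemes of lines and $2$-planes on $X$, combined with the flag-variety description above. The generality hypothesis on $(l,x)$ plays no role in the bijection itself, but is harmlessly invoked to guarantee that the relevant points of $F(X)$ and $F(X,x)$ are smooth, so that there is no ambiguity in what one means by ``the space of lines through $[l]$''. I do not anticipate a serious obstacle; essentially everything is tautological once the flag-variety description of lines on the Grassmannian is in hand.
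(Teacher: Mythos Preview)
The paper records this lemma as a ``standard fact'' and gives no proof at all, so there is nothing to compare against. Your argument is correct and is exactly the classical one: lines on the Grassmannian under Pl\"ucker are pencils $(x,P)$, and the two assertions follow by intersecting with $X$. One small remark: for the second statement it is slightly cleaner to work directly in the ambient $\PP^{n-1}$ of $F(X,x)$ rather than routing through $F(X)\subset\mathbb{G}$, since the paper (see Theorem~\ref{thm_numerical_lines}(iv)) views $F(X,x)$ as a complete intersection in $\PP^{n-1}$ and applies the lemma in that context; but your route works too, because the inclusion of the Schubert variety of lines through $x$ into $\mathbb{G}$ is linear in Pl\"ucker coordinates, so lines go to lines. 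Your parenthetical ``$x'\in l$ and then $x'=x$'' is a bit garbled---the clean statement is that if every member of the pencil through $x'$ also contains $x$, then two distinct members meet only at $x'$, forcing $x=x'$---but the surrounding sentence already says this correctly.
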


We record the following facts about lines and $2$-planes on $X$.  Many of these are contained in the paper by Debarre and Manivel, see \cite{DebarreManivel_Linear}.

\begin{thm}\label{thm_numerical_lines}
\begin{enumerate}[(i)]
\item If $2n - \textbf{d} - 3 \geq 0$, then $F(X)$ is non-empty.
\item If $2n - \textbf{d} - 3 \geq 0$, the scheme $F(X)$ is smooth of dimension $2n - \textbf{d} - 3$ for a general complete intersection $X$.
\item If $F(X)$ is smooth and if $n \geq 2\textbf{d} + 4$ then $Pic(F(X))$ is isomorphic to $\ZZ$, generated by $\OO(1)$.
\item The scheme $F(X,x)$ may be identified with a projective scheme defined by equations of degrees $(1, 2, \ldots, d_1, 1, 2, \ldots, d_2, \ldots, 1, 2, \ldots, d_c)$ inside $\PP^{n-1}$.  For a general $x \in X$ the scheme $F(X, x)$ is a smooth complete intersection of this type.  If $3 + c \leq n$, then for a general $x \in X$, $Pic(F(X,x)) = \ZZ$, generated by $\OO(1)$.
\item Define $\gamma(n, d_i) = n - 2 - \frac{1}{2} \Sigma(d_i^2 + d_i)$.  If $\gamma > 0$ then for a general line $l \subset X$, the family of $2$-planes in $\PP^n$ contained in $X$ and containing $l$ is smooth, irreducible, and has dimension $\gamma(n, d_i)$.
\end{enumerate}
\end{thm}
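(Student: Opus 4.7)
The plan is to reduce each item to an incidence-correspondence computation on a suitable Grassmannian, together with a Lefschetz-type statement for the Picard-group assertions; versions of each claim appear in Debarre--Manivel \cite{DebarreManivel_Linear}, so the proofs are largely standard.

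For (i) and (ii), realize $F(X)$ as the zero locus on $\Gr(2,n+1)$ of the section of $\bigoplus_{i=1}^{c}\Sym^{d_i}\mc{S}^\vee$ (with $\mc{S}$ the tautological rank-$2$ subbundle) determined by the equations of $X$. This bundle has rank $\textbf{d}+c$ and is globally generated, so when $2(n-1)-(\textbf{d}+c)\geq 0$ its top Chern class is a positive multiple of the class of a point and $F(X)$ is non-empty; this proves (i). For (ii), consider the universal incidence $I\subset \Gr(2,n+1)\times \PP(\Gamma)$, where $\PP(\Gamma)$ parametrizes complete intersections of type $\underline{d}$. The projection to $\Gr(2,n+1)$ is a projective bundle (fixing a line, the space of complete intersections containing it is linear), so $I$ is smooth and irreducible, and a standard generic-smoothness argument applied to the projection to $\PP(\Gamma)$ shows $F(X)$ is smooth of the expected dimension for general $X$.

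For (iii), apply a Sommese-type extension of the Lefschetz hyperplane theorem: since $F(X)$ is the vanishing locus of a globally generated vector bundle on the Grassmannian, one obtains $H^2(\Gr(2,n+1),\ZZ)\cong H^2(F(X),\ZZ)$ provided the rank is sufficiently small compared to the dimension of the ambient Grassmannian, and $n\geq 2\textbf{d}+4$ is exactly what is needed. Combined with $\Pic(\Gr(2,n+1))=\ZZ\cdot\OO(1)$, this yields the claim. For (iv), choose affine coordinates so that $x$ is the origin of $\AAA^n\subset\PP^n$ and expand each defining equation $f_i = f_i^{(1)}+f_i^{(2)}+\cdots+f_i^{(d_i)}$ into its homogeneous components. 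A line through $x$ with direction $[v]\in\PP^{n-1}$ lies in $X$ iff $f_i^{(j)}(v)=0$ for all $1\leq j\leq d_i$ and $1\leq i\leq c$, so $F(X,x)\subset\PP^{n-1}$ is cut out by equations of the listed degrees; for general $x$ these form a regular sequence by Bertini, giving smoothness, and the Picard-group assertion follows from Grothendieck--Lefschetz under the codimension hypothesis $3+c\leq n$.

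For (v), work on the flag variety $\Fl(2,3;n+1)$ parametrizing incident pairs $(l\subset P)$ of a line in a $2$-plane in $\PP^n$. Inside $\Fl(2,3;n+1)\times \PP(\Gamma)$, form the incidence $J$ of triples $(l,P,X)$ with $P\subset X$; over $\Fl$ this is a projective bundle, so $J$ is smooth and irreducible. Projecting to pairs $(l,X)$ yields, for generic $(l,X)$, a smooth fiber of the expected dimension, which a direct count (each degree-$d_i$ equation contributes $\binom{d_i+2}{2}$ conditions for $P\subset X$, of which $d_i+1$ are automatic from $l\subset X$) identifies with $\gamma(n,d_i)=n-2-\tfrac{1}{2}\sum(d_i^2+d_i)$. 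The main obstacle is irreducibility of this fiber: dimension alone is not enough, and following Debarre--Manivel one degenerates $X$ to a carefully chosen (e.g.\ monomial) complete intersection on which the family of $2$-planes containing a fixed line can be described explicitly and shown to be irreducible. Verifying the precise numerical hypotheses that feed into the Sommese and Grothendieck--Lefschetz inputs of (iii) and (iv) is the secondary place where care is needed.
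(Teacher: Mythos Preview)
Your treatment of (i)--(iv) is essentially what the paper does: it defers (i)--(iii) to Debarre--Manivel (and your sketch is the standard argument found there), and your coordinate expansion plus Lefschetz for (iv) matches the paper's proof. The one minor difference in (iv) is that the paper deduces smoothness of $F(X,x)$ from the fact that every line through a general $x$ is free (so the zero locus is automatically transverse), rather than invoking Bertini; both work.

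For (v), however, the paper takes a genuinely different and much shorter route. Using Lemma~\ref{lem_2plane}, the family of $2$-planes in $X$ containing $l$ is identified with the family of lines on $F(X,x)$ through the point $[l]$, for any $x\in l$. Since $F(X,x)$ is itself a smooth complete intersection of type $(1,2,\ldots,d_1,\ldots,1,2,\ldots,d_c)$ in $\PP^{n-1}$ by (iv), one simply applies (iv) \emph{again} to $F(X,x)$: for general $[l]$ the space of lines through $[l]$ is a smooth complete intersection of the expected dimension $\gamma(n,d_i)$, and irreducibility is automatic because a positive-dimensional smooth complete intersection is connected. Your approach via an incidence correspondence on $\Fl(2,3;n+1)$ is workable, but it forces you to confront irreducibility separately (your degeneration sketch), and as written it yields the conclusion only for a general pair $(l,X)$ rather than for a general $l$ on an arbitrary smooth $X$, which is what the statement asserts. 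The paper's iterated use of (iv) sidesteps both issues.
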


\begin{proof}
For (i) - (iii), see Th\'eor\`eme 2.1(b) and Proposition 3.1 of \cite{DebarreManivel_Linear}.  For part (iv), to see that the space of lines through a fixed point $x \in X$ is cut out by equations of this degree is an easy computation in coordinates.  For a general point $x \in X$, every line through $x$ is free, and so the variety $F(X,x)$ is smooth and must be a complete intersection.  The statement about the Picard group follows from the Lefschetz Theorem.  Finally for (v), we may simply apply part (iv) to the variety $F(X,x)$ in light of the Lemma~\ref{lem_2plane}
\end{proof}

\subsection{Existence of 1-Twisting Surfaces}

\begin{sit}\label{sit_ci}
 We assume that $X \subset \PP^n$ is a smooth complete intersection of type $\underline{d} = (d_1, \ldots, d_c)$ with $\dim(X) \geq 3$ and each $d_i \geq 2$.  In this section we assume that $n \geq \Sigma d_i^2$.
 \end{sit}

Here we cite the results in \cite{dJS} which imply the existence of $1$-twisting surfaces on $X$.  We remark that while their method is certainly ingenious, it is possible that a more ``direct" proof exists.  One can show that on any complete intersection $X$ as in Situation~\ref{sit_ci}, a general line $l \subset X$ is contained in a smooth quadric surface $\Sigma \subset X$.  If one can show that a general smooth conic (or even a general pair of intersecting lines) on $X$ is contained in a smooth quadric surface then this would be enough to conclude that such surfaces are $1$-twisting.  The methods of \cite{dJS} imply this a posteriori by an indirect method which involves exhibiting $X$ as a hyperplane section of larger dimensional smooth complete intersection.

\begin{thm}\label{thm_one_twist_ci}(\cite{dJS})
Given a smooth $X$ as in Situation~\ref{sit_ci}, then $X$ contains a smooth quadric surface $\Sigma$ which is $1$-twisting.
\end{thm}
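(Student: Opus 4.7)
The plan is to reformulate the $1$-twisting condition cohomologically and then construct the required quadric surface by the hyperplane-section strategy of \cite{dJS}. By the final corollary of Section~\ref{section_twisting}, a smooth quadric surface $\Sigma \cong \PP^1 \times \PP^1 \subset X$ ruled by lines is $1$-twisting if and only if the associated family of pointed lines $f: \PP^1 \to \mc{C}$ is fiberwise free and satisfies $H^1(\PP^1, f^*T_{ev}(-1)) = 0$. Via Proposition~\ref{rel-ev}, the Leray spectral sequence, and the vanishing $R^1\pi_*\mc{N}_f(-\sigma) = 0$ implied by fiberwise freeness, this reduces to showing $H^1(\Sigma, \mc{N}_f(-\sigma - F)) = 0$, where $\sigma$ is the square-zero section and $F$ is a fiber of $\pi: \Sigma \to \PP^1$. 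Finiteness of $(\pi, f)$ is automatic provided $\Sigma$ is nondegenerately embedded in $X$, i.e., spans a $\PP^3 \subset \PP^n$.

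I would produce candidate quadric surfaces indirectly, following \cite{dJS}: realize $X$ as a hyperplane section $X = X' \cap H$ of a smooth complete intersection $X' \subset \PP^{n+1}$ of the same multidegree. This is generically possible, and the hypothesis $n \geq \sum d_i^2$ is still satisfied on $X'$ with room to spare. On $X'$ the family of $2$-planes through a general line has dimension one larger than on $X$ by Theorem~\ref{thm_numerical_lines}(v), so one may choose two distinct $2$-planes $\Pi_1, \Pi_2 \subset X'$ meeting transversally along a free line $l$. Their union is a reducible quadric surface in some $\PP^3 \subset \PP^{n+1}$. Arranging $H$ to contain this $\PP^3$ places $\Pi_1 \cup \Pi_2$ inside $X$. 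The linear system of quadric surfaces in $\PP^3$ containing $l$ is a projective space connecting $\Pi_1 \cup \Pi_2$ to smooth quadrics; intersecting it with the subvariety of quadrics contained in $X$ yields a positive-dimensional family of quadric surfaces in $X$ through $l$ whose general member $\Sigma$ is smooth.

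The cohomological vanishing $H^1(\Sigma, \mc{N}_f(-\sigma - F)) = 0$ is open in this family on the Hilbert scheme, so it suffices to verify it at one member. I would check it at the reducible limit $\Pi_1 \cup \Pi_2$, where the normal sheaf $\mc{N}_f$ restricts explicitly to each $\Pi_i \cong \PP^2$ and the required $H^1$ can be computed directly from the normal bundle sequence of a $2$-plane in $X$. Fiberwise freeness holds for a general $\Sigma$ because a general ruling line on $\Sigma$ is a general line on $X$, hence free by Theorem~\ref{thm_numerical_lines}. The main obstacle is the second step: a clean direct parameter count for smooth quadric surfaces in $X$ through a fixed line is unavailable under the sharp hypothesis $n \geq \sum d_i^2$, which is why \cite{dJS} borrows an extra ambient dimension via the hyperplane-section construction; showing that the cohomological vanishing survives specialization back to $X$---and that the resulting $\Sigma$ is both smooth and nondegenerate---is the heart of the argument.
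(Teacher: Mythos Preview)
The paper's own proof is a bare citation to \cite{dJS} (Corollary 6.11 with $m=2$, and Lemmas 7.8 and 7.10); it does not reconstruct the argument. Your proposal tries to do more, sketching what you believe the dJS construction is. Two issues arise.

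First, the step ``Arranging $H$ to contain this $\PP^3$ places $\Pi_1 \cup \Pi_2$ inside $X$'' is incoherent as written. Once you have fixed $X$ and realized it as $X' \cap H$, the hyperplane $H$ is the $\PP^n$ spanned by $X$; it is not a parameter you can still adjust. If you instead insist that the $2$-planes $\Pi_i \subset X'$ lie in $H$, then $\Pi_i \subset X' \cap H = X$ and you have gained nothing by passing to $X'$. The extra ambient dimension in \cite{dJS} is not used to manufacture quadric surfaces directly in this way.

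Second, and more substantively, the paper itself indicates (see the proof of Lemma~\ref{lem_generic_properties} and the paragraph preceding the theorem) that the actual dJS mechanism is different: one shows that the evaluation map $\kspace{2}{X}{2} \to X \times X$ is dominant with suitably connected fibers, and then a \emph{free rational curve in a general fiber} of this map is what yields the $1$-twisting quadric surface (Lemmas 7.8 and 7.10 of \cite{dJS} convert such a curve in the moduli space into a ruled surface with the required normal-bundle positivity). The hyperplane-section trick enters in establishing the needed properties of these fibers, not in a geometric construction of $\Pi_1 \cup \Pi_2$. Your degeneration to a reducible quadric and the proposed cohomology check there are therefore aimed at the wrong target; even on its own terms, the claim that the general member of your family is smooth, and that the vanishing at the non-finite limit $\Pi_1 \cup \Pi_2$ is well-posed, would each require separate justification.
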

\begin{proof}
This follows from \cite{dJS} Corollary 6.11 (take $m = 2$) and Lemmas 7.8 and 7.10.
\end{proof}

\begin{rmk}
This theorem may be shown by carrying out the program in the preceding paragraph directly in the case of smooth cubic hypersurfaces (see \cite{D2}) in $\PP^9$, smooth quadric hypersurfaces (trivially) in $\PP^4$, and smooth $(2,2)$ complete intersections in $\PP^8$ via synthetic arguments.
\end{rmk}

\subsection{Existence of 2-Twisting Surfaces}

By Theorem~\ref{thm_one_twist_ci}, there is a smooth quadric surface embedded in $X$ by $h: \Sigma \rightarrow X$ such that the corresponding cohomology group $H^1(\Sigma, \mc{N}( -\sigma - F)) = 0$ where $\sigma$ is a square zero section class and $F$ is the class of a fiber for the first projection map $\pi: \Sigma \rightarrow \PP^1$.  Using the Leray spectral sequence, we conclude that $H^1(\PP^1, \pi_*\mc{N}( -\sigma - F)) = 0$ as well.

Such a quadric surface on $X$ with a square zero section corresponds to a map $f: \PP^1 \rightarrow \mc{C}$.  Each line on a smooth quadric surface is a free line, and so by Lemma 7.4 of \cite{dJS}, these fibers also map to free lines on $X$.  In other words, the map $f$ is fiberwise free, and so by Proposition~\ref{rel-ev} we have $f^*T_{ev} = \pi_*(\mc{N}_f(-\sigma))$.  Now $\pi_*(\mc{N}_f(-\sigma - F)) = \pi_*(\mc{N}_f(-\sigma))(-1)$ and so that $f^*T_{ev}$ is a globally generated vector bundle on $\PP^1$.  Likewise, the section curve $\sigma \subset \Sigma$ is mapped to a free line on $X$ and so $\sigma^*h^*T_X = f^*ev^*T_X$ is also globally generated on $\PP^1$.

\begin{lem}\label{lem_grr}
If $X$ is a smooth complete intersection as in Situation~\ref{sit_ci} and if a $1$-twisting quadric surface $h: \Sigma \rightarrow X$ corresponds to the map $f:\PP^1 \rightarrow \mc{C}$ as discussed above, then $\deg f^*T_{ev} = n + 1 - \sum d_i^2$.
\end{lem}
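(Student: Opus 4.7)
The strategy is to use the identification $f^* T_{ev} \cong \pi_* \mc{N}_f(-\sigma)$ from Proposition~\ref{rel-ev} and reduce the degree computation on $\PP^1$ to a Grothendieck--Riemann--Roch calculation on $\pi \colon \Sigma \to \PP^1$ (hence the label of this lemma).

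First I would twist the relative tangent sequence $0 \to T_\pi \to h^* T_X \to \mc{N}_f \to 0$ by $\OO(-\sigma)$ and push forward. Because every fiber of $\pi$ is mapped to a free line and $T_\pi|_{\Sigma_t} \cong \OO(2)$, both $T_\pi(-\sigma)$ and $h^* T_X(-\sigma)$ have vanishing $R^1\pi_*$, so the pushed-forward sequence remains short exact:
$$0 \to \pi_* T_\pi(-\sigma) \to \pi_* h^*T_X(-\sigma) \to \pi_* \mc{N}_f(-\sigma) \to 0.$$
Since $T_\pi \cong \OO_\Sigma(2\sigma)$, the first term is $\pi_* \OO_\Sigma(\sigma) \cong \OO_{\PP^1}^{\oplus 2}$ and contributes degree $0$, so I only need to compute $\deg \pi_* h^* T_X(-\sigma)$.

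For this I apply GRR to $\pi$: since $R^1 \pi_*(h^*T_X(-\sigma)) = 0$ and $\mathrm{td}(T_\pi) = 1 + \sigma$, the formula collapses to
$$\deg \pi_* h^* T_X(-\sigma) = \int_\Sigma \bigl( \mathrm{ch}_2(E) + c_1(E) \cdot \sigma \bigr), \qquad E := h^*T_X(-\sigma).$$
The key numerical input is the Chern class calculation for $T_X$ from the conormal sequence of the complete intersection, $c(T_X) = (1 + H)^{n+1}/\prod_i (1 + d_i H)$, which yields $c_1(T_X) = (n + 1 - \sum d_i) H$ and $c_2(T_X) = \bigl(\binom{n+1}{2} - (n+1)\sum d_i + \tfrac{1}{2}((\sum d_i)^2 + \sum d_i^2)\bigr) H^2$. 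The crucial geometric identity is $h^* H = F + \sigma$ (both fibers of $\pi$ and the section $\sigma$ map to lines on $X$, and $\Sigma$ is embedded as a quadric by $\OO(1,1)$), giving in particular $\int_\Sigma h^* H^2 = (F+\sigma)^2 = 2$. The twist by $-\sigma$ is handled by the standard formula $c_2(V \otimes L) = c_2(V) + (r-1) c_1(V) c_1(L) + \binom{r}{2} c_1(L)^2$ with $L = \OO(-\sigma)$, $r = n - c$, and $c_1(L)^2 = \sigma^2 = 0$.

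Substituting everything into the GRR expression above and simplifying produces a clean cancellation: every term linear in $\sum d_i$ vanishes, leaving $\deg \pi_* h^* T_X(-\sigma) = n + 1 - \sum d_i^2$, which is the desired formula. The main obstacle is purely algebraic bookkeeping of the symmetric functions of the $d_i$'s; the conceptual content is that the factor of $2 = (F+\sigma)^2$ coming from the quadric embedding combines precisely with $c_2(T_X)$ to isolate the $\sum d_i^2$ contribution, while the $c_1$ and $c_1^2$ terms absorb the rest. No geometric input beyond Proposition~\ref{rel-ev} and $h^*H = F + \sigma$ is required.
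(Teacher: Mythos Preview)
Your proposal is correct and follows exactly the route the paper indicates: the paper's proof consists of the single sentence ``This follows by applying the Grothendieck Riemann Roch formula and using the identification $f^*T_{ev} = \pi_*(\mc{N}_f(-\sigma))$,'' and you have simply carried out that GRR computation in detail, with the key inputs $h^*H = F + \sigma$ and the standard Chern class formula for $T_X$ on a complete intersection. Nothing is missing or different.
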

\begin{proof}
This follows by applying the Grothendieck Riemann Roch formula and using the identification $f^*T_{ev} = \pi_*(\mc{N}_f(-\sigma))$.
\end{proof}

\begin{sit}\label{sit_ci2}
Let $X$ be a smooth complete intersection as in Situation~\ref{sit_ci}.  We additionally assume that $n = \sum d_i^2$ and that the space of lines, $F(X)$, is smooth.
\end{sit}

\begin{prop}\label{prop_hypAB_ci}
Let $X$ be as in Situation~\ref{sit_ci2}.  Setting $\mc{C} = W$ and $ev = f$, the hypotheses A and B from Setup~\ref{setup_main} are satisfied.
\end{prop}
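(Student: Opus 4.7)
The plan is to take $\phi:\PP^1\to\mc{C}$ to be the map to the universal pointed line corresponding to the $1$-twisting quadric surface $h:\Sigma\to X$ produced by Theorem~\ref{thm_one_twist_ci}, and verify the three items of Setup~\ref{setup_main}: the Picard number condition on the fibers of $ev$, Hypothesis A, and Hypothesis B.

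First I would handle the Picard number hypothesis. The fiber $ev^{-1}(x) = F(X,x)$ is, by Theorem~\ref{thm_numerical_lines}(iv), a smooth complete intersection inside $\PP^{n-1}$ of the stated type for general $x\in X$, and it has Picard group $\ZZ$ provided $n\ge c+3$. In Situation~\ref{sit_ci2} we have $n=\sum d_i^2\ge 4c$, and since $d_i\ge 2$ gives $4c\ge c+3$ for all $c\ge 1$, the inequality $n\ge c+3$ holds and the fibers have Picard number $1$.

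Next I would verify Hypothesis A. A square-zero section $\sigma\subset\Sigma$ of the first projection maps under $h$ to a line on $X$; by Lemma~7.4 of \cite{dJS} every curve on $\Sigma$ that is free on $\Sigma$ maps to a free curve on $X$, so $(h|_\sigma)^*T_X=\phi^*ev^*T_X$ splits as $\bigoplus\OO(a_i)$ with each $a_i\ge 0$, i.e.\ is globally generated. This gives Hypothesis~A.

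For Hypothesis B, I would first argue that $\phi$ factors through the open locus $\mc{C}_{ev}$ where $ev$ is smooth: each fiber of $\pi:\Sigma\to\PP^1$ is a free line on $X$ (quadric surfaces are swept out by free lines), so every point of $\phi(\PP^1)$ corresponds to a pointed free line, and $\phi^*T_{ev}$ is locally free by Remark~\ref{rmk_general_case}. The map $\phi$ is fiberwise free, so Proposition~\ref{rel-ev} identifies $\phi^*T_{ev}\cong\pi_*(\mc{N}_f(-\sigma))$. The $1$-twisting hypothesis $H^1(\Sigma,\mc{N}_f(-\sigma-F))=0$, combined with the Leray spectral sequence and the vanishing $R^1\pi_*\mc{N}_f(-\sigma)=0$ established in Section~\ref{section_twisting}, yields $H^1(\PP^1,\phi^*T_{ev}(-1))=0$; since $\phi^*T_{ev}$ is a vector bundle on $\PP^1$, this vanishing forces all Grothendieck summands to have degree $\ge 0$, so $\phi^*T_{ev}$ is globally generated. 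Finally, Lemma~\ref{lem_grr} computes $\deg \phi^*T_{ev}=n+1-\sum d_i^2$, and in Situation~\ref{sit_ci2} this equals $1>0$, giving the positivity required in Hypothesis~B.

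There is no serious obstacle here: all the real work has been done in Theorem~\ref{thm_one_twist_ci} (existence of the $1$-twisting surface) and in Proposition~\ref{rel-ev} and Lemma~\ref{lem_grr} (the cohomological translation and the degree computation). The only thing to keep in mind is the boundary nature of Situation~\ref{sit_ci2}: the degree of $\phi^*T_{ev}$ is exactly $1$, so we need to be slightly careful that we only need strict positivity (not a larger lower bound) to invoke Setup~\ref{setup_main}, which is indeed all that Hypothesis~B demands.
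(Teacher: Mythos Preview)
Your argument is correct and follows essentially the same route as the paper: the paper's proof invokes the ``discussion above'' (the paragraphs preceding the proposition), which is exactly the Leray/Proposition~\ref{rel-ev}/Lemma~\ref{lem_grr} chain you spell out, together with Theorem~\ref{thm_numerical_lines}(iv) for the Picard number. The only item you omit is the one-line check that $W=\mc{C}$ is smooth (required in Setup~\ref{setup_main}), which the paper notes follows from the assumed smoothness of $F(X)$ in Situation~\ref{sit_ci2}.
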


\begin{proof}
As $F(X)$ is assumed to be smooth, so also is $\mc{C}$.  By Theorem~\ref{thm_one_twist_ci}, there is a quadric surface on $X$ which is $1$-twisting.  This corresponds to a map $f: \PP^1 \rightarrow \mc{C}$, and by the discussion above, we have that $f^*T_{ev}$ and $f^*ev^*T_X$ are globally generated.  By Lemma~\ref{lem_grr}, the bundle $f^*T_{ev}$ has degree $1$. For a general $x \in X$, the fiber $ev^{-1}(x)$ is a smooth complete intersection with Picard group $\ZZ$ by Theorem~\ref{thm_numerical_lines}(iv) and the dimension assumptions.  This finishes the claim.
\end{proof}

It remains to verify Hypothesis C from Section 4.

Let $f: \PP^1 \rightarrow \mc{C}$ be a general map corresponding to a $1$ twisting surface on $X$.  Let $\beta$ denote the class of $f_*[\PP^1]$ in $H^2(\mc{C}, \ZZ)$.  Let $p$ and $q$ be general points on this $\PP^1$.  The map $f$ corresponds to a smooth quadric surface on $X$ with two distinguished pointed lines (corresponding to the images of $p$ and $q$), call them $(x_1, l_1)$ and $(x_2, l_2)$.  Each of these lines is a fiber of the map $\pi: \Sigma \rightarrow \PP^1$ and each is pointed at its intersection with the distinguished section (also a line on $X$).  The following generic properties are upshots of having the positivity of $1$-twisting surfaces.

\begin{lem}\label{lem_generic_properties}
We may assume that the map $f$ satisfies the following ``generic" properties.
\begin{itemize}
\item Every line through $x_1$ and $x_2$ is free.
\item The lines $\sigma$ and $l_1$ on the corresponding $\Sigma$ may be assumed to be a general pair of intersecting lines through $x_1$.
\end{itemize}
\end{lem}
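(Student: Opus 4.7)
The plan is to show that both properties hold on nonempty open subsets of the space of maps $f:\PP^1 \to \mc{C}$ corresponding to $1$-twisting quadric surfaces on $X$; openness in each case is routine, so the content is existence.

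For the first bullet, under the hypotheses of Situation~\ref{sit_ci2}, Theorem~\ref{thm_numerical_lines}(iv) implies that $F(X,x)$ is a smooth complete intersection for a general $x \in X$, and in our degree range this in turn forces every line through $x$ to be free. It therefore suffices to arrange $(x_1,x_2)$ to be a general pair of points of $X$. I would consider the moduli of $2$-pointed maps $(f,p,q)$ of the given type, and argue that the two-point evaluation $(ev(f(p)),ev(f(q)))$ to $X \times X$ is dominant. This follows from the global generation of $f^* ev^* T_X$ (established in the paragraph preceding Lemma~\ref{lem_grr}) together with the freedom to move $p$ and $q$ independently on $\PP^1$.

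For the second bullet, I would fix $x_1 \in X$ general and study deformations of $f$ with $ev(f(p)) = x_1$, seeking dominance of the map $f \mapsto (l_1, L_\sigma) \in F(X,x_1) \times F(X,x_1)$. The cleanest conceptual route exploits the symmetry of the smooth quadric $\Sigma \cong \PP^1 \times \PP^1$: the vanishing $H^1(\Sigma, \mc{N}_f(-F - F')) = 0$ defining the $1$-twisting condition is symmetric in the two rulings $F$ and $F'$, so interchanging the roles of the section $\sigma$ and the fiber $l_1$ yields another $1$-twisting family, this time with $L_\sigma$ playing the role of distinguished fiber. Applying the argument of the first bullet to this swapped family shows that $L_\sigma$ too is a generic free line through $x_1$.

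The main obstacle is ensuring the \emph{independent} variation of $l_1$ and $L_\sigma$, so that the pair is generic in $F(X,x_1)^2$ rather than lying in some proper subvariety. Equivalently, the differential of $f \mapsto L_\sigma$, taken with $f(p)$ held fixed, must surject onto $T_{[L_\sigma]}F(X,x_1)$. Using the short exact sequence
\[
0 \to f^* T_{ev} \to f^* T_\mc{C} \to f^* ev^* T_X \to 0
\]
and the positive-degree global generation of $f^* T_{ev}$ recorded in Proposition~\ref{rel-ev} and Lemma~\ref{lem_grr}, this surjectivity reduces to a cohomological calculation on $\PP^1$ (essentially the vanishing $H^1(\PP^1, f^* T_{ev}(-p)) = 0$), which holds by positivity.
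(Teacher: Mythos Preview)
Your argument for the first bullet contains a genuine error: the two-point evaluation $(f,p,q)\mapsto (ev(f(p)),ev(f(q)))$ to $X\times X$ is \emph{not} dominant. By construction $x_1$ and $x_2$ both lie on the section line $L_\sigma\subset X$, so the image of this evaluation is contained in the locus of collinear pairs, a proper subvariety of $X\times X$ whenever $X$ is not linear. Global generation of $f^*ev^*T_X$ only guarantees that each single-point evaluation is dominant; to get surjectivity at two points you would need $H^1(\PP^1,f^*ev^*T_X(-p-q))=0$, which fails as soon as the normal bundle of $L_\sigma$ has a trivial summand. Fortunately the lemma does not require $(x_1,x_2)$ to be general in $X^2$---it only asks that every line through $x_1$ (respectively $x_2$) be free, which is an open condition on each point separately and follows from the one-point dominance you already have. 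The paper, by contrast, invokes the construction of these quadrics in \cite{dJS} as pencils of conics through a genuinely general pair of points of $X$, so that the whole surface (hence any point on it) is general.

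For the second bullet your route is correct but different from the paper's. You work on the base $\PP^1$ and use the vanishing $H^1(\PP^1,f^*T_{ev}(-p))=0$ together with the symmetry of the two rulings to obtain surjectivity onto each factor of $F(X,x_1)\times F(X,x_1)$. The paper instead stays on the surface $\Sigma$: the nodal curve $D=\sigma\cup l_1$ lies in $|F'+F|$, and the $1$-twisting hypothesis $H^1(\Sigma,\mc{N}_f(-F'-F))=0$ says precisely (Lemma~\ref{lem_moving_surfaces}) that every infinitesimal deformation of $D$ in $X$ is followed by a deformation of $\Sigma$. One then simply deforms $D$ to a general pair of intersecting free lines and lets $\Sigma$ follow. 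This avoids both the symmetry trick and the translation through Proposition~\ref{rel-ev}, and makes transparent why the $1$-twisting condition is exactly what is needed.
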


\begin{proof}
The first property holds as these surfaces are constructed by taking rational curves in the fibers of the map $\kspace{2}{X}{2} \rightarrow X^2$ which is shown to be dominant; see \cite{dJS} Corollary 5.10, Corollary 6.11, and Lemma 7.8.

The second property holds by picking any one twisting surface $\Sigma$ on $X$.  Let $D$ be the nodal curve on $\Sigma$ consisting of a square zero section and any fiber.  A general deformation of $D$ in $X$ (as a nodal curve) is contained in a deformation of $\Sigma$ in $X$.  The deformation of $\Sigma$ remains $1$-twisting.  See Lemma 7.4 of \cite{dJS}.
\end{proof}

 To verify Hypothesis C, we must verify that any stable, genus $0$ map $g: B \rightarrow \mc{C}$ with class $\beta$ passing through $f(p)$ and such that $ev(f(\PP^1)) = ev(g(B))$ is unobstructed as a curve on $\mc{C}$ passing through the point lying over $f(q)$.  The data of such a $g$ is equivalent to a degree $2$ surface on $X$ (possibly singular, possibly reducible) which contains the lines $l_1$ and $\sigma$ and contains some line through the point $x_2$.

The upshot of having $1$-twisting surfaces on $X$ of low degree is that the cases we must check for the map $g$ are small in number.  The following is well known.

\begin{lem}
A map $g$ as above can sweep out on $X$ either a non-reduced $2$-plane, the union of two $2$-planes meeting in a line, a conical quadric surface, or a smooth quadric surface.
\end{lem}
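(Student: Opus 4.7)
The plan is to reduce this statement to a classical classification of degree-$2$ surfaces in projective space. First I would set up: a stable genus $0$ map $g: B \to \mc{C}$ with $g_*[B] = \beta$ determines a family of lines on $X$ via $\Sigma := B \times_{F(X)} \mc{C} \to X$, whose swept $2$-cycle in $X \subset \PP^n$ is the pushforward $h_*[\Sigma]$. The key numerical input is that $\pi_*\beta$ is the class of a conic in $F(X) \subset \Gr{1}{n}$: indeed the ruling of a smooth quadric $\PP^1 \times \PP^1 \subset \PP^3$ embeds as a conic in the Plücker Grassmannian. Consequently the swept $2$-cycle has degree $2$ under $X \hookrightarrow \PP^n$.

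Next I would check that the swept subscheme is genuinely $2$-dimensional.  Since $\pi_*\beta \neq 0$ in $H_2(F(X))$, the composition $\pi \circ g$ is non-constant, so the parametrized lines are not all identical, and the image has dimension exactly $2$.

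Finally I would invoke the classification of $2$-dimensional subschemes of $\PP^n$ of degree $2$.  Any such subscheme spans at most a $\PP^3$, by the standard bound $\deg Y \geq \text{codim}(Y) + 1$ for irreducible non-degenerate $Y$, applied componentwise.  Inside $\PP^3$, degree-$2$ two-cycles are classified by the rank of the defining quadratic form: rank $4$ yields a smooth quadric, rank $3$ a quadric cone, rank $2$ two distinct planes meeting along a line, and rank $1$ a non-reduced doubled $2$-plane.  These four strata match exactly the four cases in the lemma.

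The main obstacle I anticipate is the bookkeeping for the non-reduced and reducible cases rather than the smooth case.  In particular, one must verify that a map $g$ whose image in $F(X)$ is a line in $F(X)$ covered $2$-to-$1$ gives the doubled-plane structure rather than some other non-reduced variant, and one must check that when $B$ is reducible with two non-contracted components each mapping to a line in $F(X)$, the node of $B$ picks out a pointed line lying in the intersection of the two resulting pointed $2$-planes, forcing those planes to meet precisely along that line.  Once these identifications are made, matching each stratum to a case in the lemma is immediate.
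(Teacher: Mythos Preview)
The paper does not prove this lemma; it is introduced with the phrase ``The following is well known'' and no argument is given. Your approach---compute that the swept $2$-cycle has degree $2$ via the Pl\"ucker degree of $\pi_*\beta$, then invoke the classical classification of degree-$2$ surfaces in projective space---is the standard way to justify it and is correct.

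One small clarification: the inequality $\deg Y \geq \mathrm{codim}(Y)+1$ applied componentwise bounds the span of each irreducible piece, but does not by itself force two distinct planes to meet in a line rather than in a point (or not at all). The fix is exactly what you identify in your obstacles paragraph: connectedness of $B$ forces the node to land on a line of $X$ common to both pencils, so the two planes share that line and together span only a $\PP^3$. Once that is in place, the rank stratification of a single quadratic form on $\PP^3$ yields precisely the four listed cases.
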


We rule out the most degenerate cases first.

\begin{lem}
A map $g$ as above cannot correspond to a non-reduced $2$-plane or the union of two $2$-planes meeting in a line.
\end{lem}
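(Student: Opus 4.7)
The plan is to show that each of the two degenerate possibilities forces the pair $(l_1, \sigma)$ to satisfy a non-trivial incidence condition on the Fano variety $F(X, x_1)$, contradicting the genericity supplied by Lemma~\ref{lem_generic_properties}.

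First I would pin down the swept surface $S$. By condition (1) of \ref{conditions}, $ev \circ g$ agrees after stabilization with $ev \circ f$, whose image is the distinguished section $\sigma$; combined with $g(p') = (x_1, l_1)$ and $ev(g(q')) = x_2$, the surface $S$ must contain $l_1$, $\sigma$, and a line through $x_2$. In particular $l_1, \sigma \subset S$ form a pair of intersecting lines at $x_1$.

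If $S = 2P$ for some $2$-plane $P \subset X$, then $l_1, \sigma \subset P$, and Lemma~\ref{lem_2plane} identifies this with the existence of a line of $F(X, x_1)$ through both $[l_1]$ and $[\sigma]$. Using Theorem~\ref{thm_numerical_lines}(iv)-(v) I have $\dim F(X, x_1) = n - 1 - \sum d_i$, while lines in $F(X, x_1)$ through a fixed point form a family of dimension $\gamma = n - 2 - \tfrac{1}{2}\sum d_i(d_i+1)$. The locus of collinear pairs in $F(X, x_1) \times F(X, x_1)$ thus has codimension $\dim F(X, x_1) - \gamma - 1 = \tfrac{1}{2}\sum d_i(d_i-1)$ under the hypothesis $n = \sum d_i^2$, and this is $\geq 1$ since every $d_i \geq 2$; the non-reduced case is ruled out by genericity of $(l_1, \sigma)$.

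For $S = P_1 \cup P_2$ with $P_1 \cap P_2 = m$ a line, I would split on the distribution of $l_1, \sigma$ between the planes. If both lie in a single $P_i$, the previous argument applies to $P_i$. Otherwise $l_1 \subset P_1$, $\sigma \subset P_2$ with neither equal to $m$, so $m$ must pass through $x_1 = l_1 \cap \sigma$, and Lemma~\ref{lem_2plane} produces lines of $F(X, x_1)$ joining $[m]$ to $[l_1]$ (from $P_1$) and to $[\sigma]$ (from $P_2$). Hence $[m]$ would lie in the intersection of the cones $T_{[l_1]}$ and $T_{[\sigma]}$ swept by lines through $[l_1]$ and $[\sigma]$ in $F(X, x_1)$, each of dimension $\gamma + 1$; the expected dimension $2(\gamma+1) - \dim F(X, x_1)$ evaluates to $-1$ when $n = \sum d_i^2$, so for a generic pair this intersection is empty and no such $m$ can exist. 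The main subtlety will be confirming that the ``general pair'' produced by Lemma~\ref{lem_generic_properties} is strong enough to avoid these proper bad loci, which the dimension counts verify directly.
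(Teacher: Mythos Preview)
Your argument is correct and follows the same route as the paper: both cases are ruled out by showing that the pair $([l_1],[\sigma])$ would have to lie in a proper incidence locus of $F(X,x_1)\times F(X,x_1)$, with the decisive inequality $2(\gamma+1)<\dim F(X,x_1)$ (equivalently $n\le\sum d_i^2$) in the two--plane case. The only presentational differences are that the paper dispatches the non-reduced case with the one-liner ``$X$ is not linear'' rather than your dimension count, and in the second case it makes your ``expected dimension $-1$, hence generically empty'' step rigorous by restricting to free lines and checking that the evaluation from $R=N_2\times_{F(X,x_1)}N_2$ cannot dominate $F(X,x_1)^2$ for dimension reasons---which is precisely the justification your sketch is gesturing at.
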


\begin{proof}
If the map $g$ corresponded to a non-reduced plane on $X$, then both $\sigma$ and $l_1$ would be contained in the same $2$-plane.  The lines $\sigma$ and $l_1$ are general lines through $x_1$.  Since $X$ is not linear, this is impossible.

Suppose then that the map $g$ corresponds to the union of two $2$-planes meeting in a line.  By the first paragraph, $l_1$ and $\sigma$ cannot be contained in the same $2$-plane.  Then there must be a plane $P_1$ containing $l_1$ and a plane $P_2$ containing $\sigma$ and they necessarily meet in a line through $x_1$.  This statement may be reinterpreted as saying that on $F(X, x_1)$, the two general points $[l_1]$ and $[\sigma]$ have been connected by a chain of two lines on $F(X, x_1)$.  This will lead to a contradiction.

As every line on $X$ containing $x_1$ is free by Lemma~\ref{lem_generic_properties}, the space $F(X, x_1)$ is a smooth complete intersection of type $(1, \ldots, d_1, \ldots, 1, \ldots, d_c)$ (see Theorem~\ref{thm_numerical_lines}).  The dimension of $F(X, x_1)$ is $n - \textbf{d} - 1$.  The expected dimension of lines on $F(X, x_1)$ through any point is $n - 2 - \frac{1}{2} \sum_{i = 1}^c d_i^2 + d_i$.  As $\sigma$ and $l_1$ are general lines through $x_1$, this is the actual dimension of lines on $F(X, x_1)$ through both $\sigma$ and $l_1$.  The family of lines on $F(X, x_1)$ passing through $\sigma$, respectively through $l_1$, sweeps out a variety of dimension $n - 1 - \frac{1}{2} \sum_{i = 1}^c d_i^2 + d_i$. Then because $2(n - 1 - \frac{1}{2} \sum_{i = 1}^c d_i^2 + d_i) < n - \textbf{d} - 1$, we expect these two varieties are disjoint.

Let $N_2$ be the open subset of the moduli space of two pointed lines on $F(X,x_1)$ where each line is free.  Let $R$ be the fiber product $N_2 \times_{F(X,x_1)} N_2$ where the maps $N_2 \rightarrow F(X, x_1)$ are given by evaluating at the second marked point.  A point of $R$ corresponds to the geometric data of a point $p$ of $F(X,x_1)$, and two free lines through $p$ with a point on each of them.  There is an evaluation map $ev: R \rightarrow F(X, x_1) \times F(X, x_1)$.  Because $R$ only involves free lines on $F(X,x_1)$, the scheme $R$ is smooth and has dimension $n - \textbf{d} - 1 + 2(n - 1 - \frac{1}{2} \sum_{i = 1}^c d_i^2 + d_i)$.  If $ev$ were dominant, then we would have $\dim(R) \geq 2(n - \textbf{d} - 1)$.  This would imply that $n < \sum d_i^2$, a contradiction.  This shows two general points of $F(X, x_1)$ cannot be connected by a chain of two lines on $F(X, x_1)$, and this finishes the claim.
\end{proof}

\begin{prop}
In the above situation, Hypothesis $C$ holds.
\end{prop}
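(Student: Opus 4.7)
The plan is to reduce, via Remark~\ref{rmk_weaken_hypothesisC}, to verifying unobstructedness of $ev_2$ at each boundary point of $M_{1,2}$, i.e., at stable maps $\phi' \colon B \to \mc{C}$ with reducible domain. The two preceding lemmas rule out the cases in which the image surface in $X$ is a non-reduced $2$-plane or a union of two $2$-planes meeting in a line, while the smooth-quadric case has irreducible $B$ and is automatic by the remark. The only remaining case is that $\phi'$ sweeps out an irreducible quadric cone $Q \subset X$ with $L_0 = ev(\phi(\PP^1))$ as a ruling.

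I would first pin down $\phi'$ in this case: combining the requirement $ev \circ \phi' \equiv L_0$ with the fact that every ruling of $Q$ passes through the vertex $v$ forces the domain to decompose as $B = B_1 \cup B_2$ meeting at a single node, where $B_1$ is contracted via $ev$ to $v$ and parameterizes the rulings of $Q$ as a conic $g \colon B_1 \to F(X, v)$, while $B_2$ is isomorphic to $L_0$ via $ev$ and has $\pi \circ \phi'|_{B_2}$ constant at $[L_0]$. The constraint $\phi'(p') = (x_1, l_1)$ forces $v = x_1$, $l_1 \subset Q$, and $p' \in B_1$; the constraint $\phi'(q') = z$ then forces $q' \in B_2$ and $z = (x_2, L_0)$. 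Thus the cone case only contributes when $z$ is this specific point, and for all other $z$ the hypothesis is automatic.

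To check smoothness of $ev_2^{-1}(z)$ at $\phi'$, I would compute $H^1(B, \phi'^* T_\mc{C}(-q'))$ via Mayer--Vietoris along the node. Since $B_2$ lies in the fiber of $\pi \colon \mc{C} \to F(X)$ over $[L_0]$, the pullback $\phi'|_{B_2}^* T_\mc{C}$ splits as $T_{L_0} \oplus (\pi^*T_{F(X)})|_{L_0} = \OO(2) \oplus \OO^{\dim F(X)}$, so $H^1(B_2, \phi'|_{B_2}^* T_\mc{C}(-q')) = 0$ immediately. On $B_1$, the short exact sequence $0 \to g^*T_{F(X, v)} \to \phi'|_{B_1}^* T_\mc{C} \to T_X|_v \otimes \OO_{B_1} \to 0$ reduces both the vanishing of $H^1$ on $B_1$ and the surjectivity of the node gluing map to the assertion that $g^*T_{F(X, v)}$ is semi-positive, i.e., that $g$ is a free curve on the complete intersection $F(X, v) \subset \PP^{n-1}$ described in Theorem~\ref{thm_numerical_lines}(iv).

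The main obstacle is proving freeness of this conic $g$. I would argue it by a dimension count on an incidence variety parallel to the one used in the earlier lemma ruling out two $2$-planes meeting in a line: given that $L_0$ and $l_1$ are two general lines through $v = x_1$ in the sense of Lemma~\ref{lem_generic_properties}, the hypothesis $n = \sum d_i^2$ is precisely the numerical threshold which makes the family of quadric cones in $X$ with vertex $v$ containing $L_0 \cup l_1$ have the expected positive dimension as a conic family on $F(X, v)$. This forces $g$ to be in generic position in $F(X, v)$, hence to be free, yielding the required cohomology vanishing and thereby Hypothesis C.
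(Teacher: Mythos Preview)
Your reduction to the quadric-cone case and the decomposition $B = B_1 \cup B_2$ with $B_1 \subset ev^{-1}(x_1)$ and $B_2 = \pi^{-1}([\sigma])$ is exactly what the paper does, and your computation on $B_2$ is correct. The gap is in your final step: the assertion that ``$n = \sum d_i^2$ is precisely the numerical threshold which makes the family of quadric cones \ldots\ have the expected positive dimension \ldots\ this forces $g$ to be in generic position, hence to be free'' is not a proof. Expected positive dimension of a family of conics through two points does not by itself imply that every (or even a general) such conic is free; you would still need to control the actual versus expected dimension and rule out excess components, and you have not done this.

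The fix is much simpler than a dimension count, and in fact you already used it once. By hypothesis $\phi'(p') = \phi(p) = (x_1, l_1)$ is a \emph{general} point of $\mc{C}$, and $p'$ lies on $B_1$. Hence $B_1$ is an irreducible rational curve in $\mc{C}$ through a general point, so by Koll\'ar II.3.11 it is free in $\mc{C}$; equivalently, $[l_1]$ is a general point of the smooth fiber $F(X,x_1)$ and the same theorem applied there gives that $g$ is free in $F(X,x_1)$. Either way one obtains $H^1(B_1, \phi'|_{B_1}^*T_{\mc{C}}(-w)) = 0$ directly, and your exact sequence (or the paper's normalization sequence $0 \to T_{\mc{C}}|_{B_1}(-w) \to T_{\mc{C}}|_B(-q') \to T_{\mc{C}}|_{B_2}(-q') \to 0$) then yields the required vanishing on $B$. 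This is precisely the paper's argument: it dispenses with any analysis of $g$ as a conic in $F(X,x_1)$ and simply records ``As the point $f(p)$ is general on $\mc{C}$, the curve $B_1$ is free.'' Note this is the same mechanism as Remark~\ref{rmk_weaken_hypothesisC}, which you invoked for the smooth-quadric case but did not apply to the component $B_1$ in the cone case.
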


\begin{proof}
With the notation as above, it must be shown that any stable, genus $0$ map $g: B \rightarrow \mc{C}$ with class $\beta$ passing through $f(p)$ and such that $ev(f(\PP^1)) = ev(g(B))$ is unobstructed as a curve on $\mc{C}$ through the point lying over $f(q)$.  By the Lemma above, the data of $g$ is equivalent to either a smooth quadric surface on $X$ or a conical quadric surface on $X$, containing the lines $l_1$ and $\sigma$ and some line $l$ through $x_2$.  Let this pointed line $(x_2 \in l)$ be denoted by the point $v \in \mc{C}$.

If $g$ corresponds to a smooth quadric surface, then it is a map $g: \PP^1 \rightarrow \mc{C}$ which is an embedding and contains $f(p)$, a general point on $\mc{C}$.  In this case $g$ is a free rational curve on $\mc{C}$ and so it is unobstructed while fixing any point, in particular while fixing the point $v$.  Note that we could have simply appealed to Remark~\ref{rmk_weaken_hypothesisC}.

It remains to consider the case where $g$ corresponds to a conical quadric surface on $X$.  Clearly, the conical point must be the intersection of $l_1$ and $\sigma$ and so the point $v$ corresponds to the pointed line $x_2 \in \sigma$.  Unraveling the correspondence, $g$ is a map into $\mc{C}$ from a reducible genus $0$ curve $B$ with two components $B_1$ and $B_2$.  The component $B_1$ maps into the fiber of $ev$; it corresponds to ``swiveling" the lines through the point $x_1$.  The component $B_2$ maps isomorphically to the fiber of $\pi$ over $\sigma \in F(X)$; it corresponds to ``moving" the point $x_1$ to $x_2$ along $\sigma$.  As the point $f(p)$ is general on $\mc{C}$, the curve $B_1$ is free.  Also, $\sigma$ corresponds to a free line on $X$, so it is unobstructed while fixing the point $x_2$.  There is an exact sequence
$$
0 \rightarrow T_{\mc{C}}|_{B_1}(-w) \rightarrow T_{\mc{C}}|_B(-p) \rightarrow T_{\mc{C}}|_{B_2}(-p) \rightarrow 0
$$
where $w$ is the point of $B_1$ mapping to $(x_1 \in \sigma) \in \mc{C}$ and $p$ is the point of $B$ mapping to $v$.  As $B_1$ is free, analyzing the long exact sequence in cohomology gives that $H^1(B, T_{\mc{C}}|_B(-p)) \cong H^1(B_2, T_{\mc{C}}|_{B_2}(-p))$.  As $\mc{C}$ is a $\PP^1$ bundle over $F$ and $B_2$ is a fiber of the map $\pi$, we conclude that $H^1(B_2, T_{\mc{C}}|_{B_2}(-p)) = 0$.  This implies that $B$ is unobstructed while fixing $v$ and the proof is complete.
\end{proof}

\begin{cor}\label{cor_2twist}
For $X$ as in Situation~\ref{sit_ci2}, there is a map $f: \PP^1 \rightarrow \mc{C}$, such that $f^*T_{ev}$ is ample.  Such a map corresponds to a $2$-twisting surface on $X$.
\end{cor}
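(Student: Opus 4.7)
The plan is to assemble the ingredients developed in Sections \ref{section_foliation}--\ref{section_ci} and then translate back to surfaces via the dictionary of Section \ref{section_twisting}. We work with $W = \mc{C}$ and $f = ev$, taking as initial map $\phi: \PP^1 \to \mc{C}$ the family of pointed lines coming from the $1$-twisting quadric surface produced by Theorem \ref{thm_one_twist_ci}.

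First I would invoke Proposition \ref{prop_hypAB_ci} together with the preceding proposition to conclude that Hypotheses A, B, and C of Setup \ref{setup_main} all hold for this configuration. Theorem \ref{thm_rel_vf} then applies verbatim and produces a relatively very free map $\phi': \PP^1 \to \mc{C}$, i.e.\ one for which $\phi'^*T_{ev}$ is locally free and ample. At this point the existence half of the statement is done, and what remains is to identify the resulting datum with a $2$-twisting surface on $X$.

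Second I would apply the Corollary at the end of Section \ref{section_twisting}. It requires two things of $\phi'$: fiberwise freeness of the associated family of pointed lines, and the vanishing $H^1(\PP^1, \phi'^*T_{ev}(-2)) = 0$. The cohomological vanishing is immediate, because writing $\phi'^*T_{ev} = \bigoplus \OO(a_i)$ with each $a_i \geq 1$ yields $\phi'^*T_{ev}(-2) = \bigoplus \OO(a_i - 2)$ with every summand of degree at least $-1$.

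Fiberwise freeness is the only step with any content. By Construction \ref{construct_components} and Lemma \ref{lem_well_defined_sequence}, $\phi'$ is obtained as a general smoothing in $M_{j_{max}}$ of a chain whose components are general points of $M_1$. Every map in $M_1$ is a family of pointed lines on the $1$-twisting quadric surface, so each of its fibers is automatically a free line on $X$; in particular the glued chain maps into the open subscheme $\mc{C}_{ev} \subseteq \mc{C}$ parameterizing free lines. Since $\PP^1$ is proper, the condition that an entire fiber of a smoothing family lands in the open set $\mc{C}_{ev}$ is itself an open condition on the parameter base. Hence the general smoothing $\phi'$ also factors through $\mc{C}_{ev}$, and so $\phi'$ is fiberwise free, giving the $2$-twisting surface. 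The main ``obstacle'' here is minor and already accounted for: the substantive difficulties are packaged into the verification of Hypotheses A, B, and C carried out earlier in Section \ref{section_ci}, and everything else is bookkeeping and a standard openness argument.
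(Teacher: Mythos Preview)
Your proposal is correct and follows the paper's approach exactly: the paper states Corollary~\ref{cor_2twist} without an explicit proof, relying on Proposition~\ref{prop_hypAB_ci} and the immediately preceding proposition to verify Hypotheses A, B, C, then on Theorem~\ref{thm_rel_vf} to produce the relatively very free curve, and finally on the dictionary of Section~\ref{section_twisting} to identify it with a $2$-twisting surface. Your fiberwise-freeness argument via openness of the condition ``lands in $\mc{C}_{ev}$'' is the natural way to fill in the detail the paper leaves implicit; the one slightly imprecise phrase is ``every map in $M_1$ is a family of pointed lines on the $1$-twisting quadric surface'' (a general point of $M_1$ is only a deformation of such, not literally on the original quadric), but your openness argument already covers this.
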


\begin{cor}
For $X$ as in Situation~\ref{sit_ci}.  Suppose $X$ has a linear-space section which admits $2$ twisting surfaces.  Then $X$ also admits two twisting surfaces.
\end{cor}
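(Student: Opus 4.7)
The plan is to take a $2$-twisting surface $h_Y\colon \Sigma = \PP^1 \times \PP^1 \to Y$ on the linear-space section $Y = X \cap L$ and show that its composition $h = i \circ h_Y\colon \Sigma \to X$ with the inclusion $i\colon Y \hookrightarrow X$ is itself a $2$-twisting surface on $X$. The key structural input is that $Y$ is cut out linearly in $X$, so the normal bundle $N_{Y/X} \cong \OO_Y(1)^{\oplus r}$ is a direct sum of copies of the hyperplane class, where $r = \dim X - \dim Y$.

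First I would dispatch the easy conditions. Finiteness of $(\pi, h)$ is immediate from the factorization $(\pi, h) = (\mathrm{id}_{\PP^1} \times i) \circ (\pi, h_Y)$: a finite map composed with a closed immersion is finite. For the global generation of $h^*T_X$, I would pull back the normal bundle sequence $0 \to T_Y \to T_X|_Y \to N_{Y/X} \to 0$ along $h_Y$ to obtain
\[
0 \to h_Y^*T_Y \to h^*T_X \to h^*\OO_X(1)^{\oplus r} \to 0,
\]
in which $h_Y^*T_Y$ is globally generated by the $2$-twisting hypothesis on $Y$ and $h^*\OO_X(1)$ is globally generated as the pullback of a globally generated line bundle. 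A Leray argument — exploiting that the fibers of $\pi$ map to free lines on $Y$ (by $2$-twisting on $Y$ combined with Lemma 7.4 of \cite{dJS}) to kill $R^1\pi_*h_Y^*T_Y$ — yields $H^1(\Sigma, h_Y^*T_Y) = 0$, so sections of $h^*\OO_X(1)^{\oplus r}$ lift to sections of $h^*T_X$ and global generation propagates to the middle term.

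The main step is the cohomological vanishing for the normal sheaf. The factorization $\Sigma \hookrightarrow \PP^1 \times Y \hookrightarrow \PP^1 \times X$ as closed immersions produces a short exact sequence
\[
0 \to \mc{N}_{h_Y} \to \mc{N}_h \to h^*\OO_X(1)^{\oplus r} \to 0,
\]
using that $N_{\PP^1 \times Y/\PP^1 \times X}$ is the pullback from the second factor of $N_{Y/X}$. I would then twist by $\OO_\Sigma(-F' - 2F)$ and run the long exact sequence: the kernel contributes trivially in $H^1$ by the $2$-twisting assumption on $h_Y$. On $\Sigma = \PP^1 \times \PP^1$, the line bundle $h^*\OO_X(1)$ has bidegree $(a,1)$ for some $a \geq 1$, so $h^*\OO_X(1)(-F'-2F) \cong \OO_\Sigma((a-2)F)$ is pulled back from the first factor and by K\"unneth has $H^1 = 0$. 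Assembling, $H^1(\Sigma, \mc{N}_h(-F'-2F)) = 0$, which completes the verification.

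The main obstacle I anticipate is the positivity $a \geq 1$, equivalently that $h_Y$ does not contract the square-zero section $F'$. For the $2$-twisting surfaces produced by Corollary~\ref{cor_2twist} this is built in — the underlying relatively very free family of pointed lines has non-constant marked-point map, so the section maps to a line (degree exactly $1$). In applying the corollary one should either restrict attention to $2$-twisting surfaces of this type or, more flexibly, choose $F'$ to be any section along which $h_Y$ is non-constant, which exists whenever the image of $h_Y$ in $Y$ is two-dimensional.
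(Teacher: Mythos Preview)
Your proof is correct and follows essentially the same route as the paper: both use the short exact sequence $0 \to \mc{N}_{h_Y} \to \mc{N}_h \to N_{Y/X}|_\Sigma \to 0$ with $N_{Y/X} \cong \OO(1)^{\oplus r}$, twist by $\OO_\Sigma(-F'-2F)$, and deduce the required $H^1$ vanishing from the $2$-twisting hypothesis on $Y$ together with the elementary computation $H^1(\Sigma, \OO_\Sigma((a-2)F)) = 0$ for $a \geq 1$. Your write-up is in fact more careful than the paper's---you explicitly verify finiteness and global generation of $h^*T_X$, and you isolate the needed positivity $a \geq 1$ of the section degree, whereas the paper simply asserts $a \geq 2$ (justified only by the construction in Corollary~\ref{cor_2twist}, not by the corollary's hypotheses alone).
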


\begin{proof}
Let $Y = X \cap \PP^k$.  Suppose $\Sigma \subset Y$ is $2$-twisting.  Abstractly our $\Sigma$ can be written as $\PP(\OO(a) \oplus \OO(a))$ for some $a \geq 2$.  Consider the short exact sequence
$$
0 \rightarrow \mc{N}_Y \rightarrow \mc{N}_X \rightarrow N_{X/Y}|_\Sigma \rightarrow 0
$$
where the sheaf $\mc{N}_X$ denotes the normal sheaf for the map $\Sigma \rightarrow \PP^1 \times X$ (and similarly for $\mc{N}_Y$).  Note that $N_{X/Y}|_\Sigma = \OO_\Sigma(C + aF)$ where $C$ is the class of the square zero section and $F$ is the class of the fiber.  Now twist this sequence down by $\OO(C + 2F)$; the result follows by taking the long exact sequence in cohomology and using the vanishing on $Y$.
\end{proof}

\begin{cor}\label{cor_results}
A general complete intersection $X$ as in Situation~\ref{sit_ci} admits $2$-twisting surfaces.  Every smooth complete intersection $X$ admits $2$-twisting surfaces if $n \geq \Sigma \binom{d_i^2 + d_i - 1}{d_i^2} + d_i^2 - 1$.  Every smooth cubic hypersurface $X \subset \PP^n$ with $n \geq 9$ admits two twisting surfaces.  Every smooth $(2,2)$ complete intersection $X \subset \PP^n$ with $n \geq 8$ admits two twisting surfaces.
\end{cor}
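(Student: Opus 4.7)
The plan is to bootstrap Corollary~\ref{cor_2twist} from the boundary case $n=\sum d_i^2$ to all four stated ranges, using the propagation of $2$-twisting surfaces through a linear-space section (the corollary immediately preceding this one) together with a Bertini-type smoothness argument for the Fano variety of lines.

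For the first assertion, assume $X$ is a general complete intersection of type $(d_1,\ldots,d_c)$ in $\PP^n$ with $n\geq \sum d_i^2$. A general linear section $Y = X\cap\PP^{\sum d_i^2}$ is itself a general complete intersection of the same type, so by Theorem~\ref{thm_numerical_lines}(ii) its Fano variety of lines $F(Y)$ is smooth and $Y$ satisfies Situation~\ref{sit_ci2}. Corollary~\ref{cor_2twist} then produces a $2$-twisting surface on $Y$, and the preceding linear-section propagation corollary transports it up to a $2$-twisting surface on $X$.

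For the second assertion, I trade the ``general'' hypothesis on $X$ for a stronger bound on $n$, and the argument is the same once one produces a single $(\sum d_i^2)$-plane $\Lambda\subset\PP^n$ for which $Y = X\cap\Lambda$ is smooth and has smooth Fano variety of lines. This is a Bertini-type dimension count on the incidence variety of pairs $(\ell,\Lambda)$ with $\ell\subset Y$. Singularity of $F(Y)$ at a line $\ell$ is detected by $h^1(\ell,N_{\ell/Y})>0$; for each degree-$d_i$ hypersurface the locus of lines on which the normal bundle acquires a summand that is ``too negative'' has codimension bounded by $\binom{d_i^2+d_i-1}{d_i^2}$, as seen from the jet-bundle computation for the restriction of $\OO(d_i)$ to a line. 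The numerical hypothesis $n \geq \sum\binom{d_i^2+d_i-1}{d_i^2}+d_i^2-1$ is precisely the threshold ensuring that the second projection from the bad incidence locus to the Grassmannian of $(\sum d_i^2)$-planes fails to be surjective, so that a generic $\Lambda$ gives a $Y$ whose Fano variety of lines is smooth. From there, Corollary~\ref{cor_2twist} plus the propagation corollary conclude.

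The last two assertions are isolated low-degree cases in which smoothness of $F(X)$ is automatic for \emph{every} smooth $X$: for smooth cubic hypersurfaces in $\PP^n$ with $n\geq 9$ this is the content of \cite{D2}, and for smooth $(2,2)$-complete intersections in $\PP^n$ with $n\geq 8$ it is classical. In each case the boundary value ($n=9$ or $n=8$) satisfies Situation~\ref{sit_ci2} automatically, so Corollary~\ref{cor_2twist} directly produces a $2$-twisting surface, and propagation to larger $n$ is handled by the preceding linear-section corollary. The main obstacle is the quantitative Bertini estimate in the second assertion, where the precise binomial coefficient $\binom{d_i^2+d_i-1}{d_i^2}$ must be extracted; everything else is a direct application of Corollary~\ref{cor_2twist} and the short normal-bundle exact sequence already used in the linear-section propagation.
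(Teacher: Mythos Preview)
Your treatment of the first, third, and fourth assertions matches the paper's proof: cut down to a $\PP^{\sum d_i^2}$-section, verify that Situation~\ref{sit_ci2} holds there (smoothness of $F(Y)$), apply Corollary~\ref{cor_2twist}, and push back up via the linear-section propagation corollary.

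Your handling of the second assertion, however, diverges from the paper and is where the proposal is shaky. The paper does \emph{not} run a Bertini-type incidence count on the locus of lines with bad normal bundle. Instead it invokes \cite{Starr_Linear_Sections}, Corollary~2.3, as a black box: in the range $n \geq \sum \binom{d_i^2 + d_i - 1}{d_i^2} + d_i^2 - 1$, a \emph{general} $\PP^{\sum d_i^2}$-section of an \emph{arbitrary} smooth $X$ is itself a \emph{general} complete intersection of type $(d_1,\ldots,d_c)$ in that $\PP^{\sum d_i^2}$. One then feeds this into the first assertion. The binomial bound is thus inherited wholesale from Starr's result about genericity of linear sections, not extracted from a normal-bundle codimension estimate.

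Your proposed direct argument is not obviously correct as written. The claim that ``the locus of lines on which the normal bundle acquires a summand that is too negative has codimension bounded by $\binom{d_i^2+d_i-1}{d_i^2}$'' is asserted without justification, and the phrase ``jet-bundle computation'' does not pin down the mechanism. More seriously, the relevant object is not the locus of bad lines on a fixed hypersurface but the incidence variety of pairs $(\ell,\Lambda)$ with $\ell \subset X \cap \Lambda$ and $h^1(\ell, N_{\ell/X\cap\Lambda}) > 0$; the normal bundle $N_{\ell/Y}$ depends on $\Lambda$, and it is not clear your sketch accounts for this. If such a direct argument can be made to work, it would be a genuine alternative to citing Starr, but as it stands the step is a gap: you have reverse-engineered the numerical bound without supplying the estimate that produces it.
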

\begin{proof}
The first statement follows because the space of lines on a general smooth complete intersection is smooth.  Setting $m = \Sigma d_i^2$ and choosing a general $\PP^m \subset \PP^n$, then $Y = \PP^m \cap X$ will be smooth and general, and so the result follows from the Theorem.  The second statement follows because a general $\PP^m$ section of any smooth $X$ will be a general complete intersection in $\PP^m$ in this degree range.  This follows from \cite{Starr_Linear_Sections}, Corollary 2.3.  The last two statements follow from the fact that the space of lines a smooth cubic hypersurface (resp. $(2,2)$ complete intersection) is automatically smooth.
\end{proof}

The result is certainly not ideal.  We ask the following question: given $X \subset \PP^n$ a smooth complete intersection, when is the space of lines on $Y = X \cap \PP^m$ smooth for a general $\PP^m \subset \PP^n$?

\section{Strong Rational Simple Connectedness}\label{section_srsc}

We now present the argument that a smooth complete intersection $X$ as in Situation~\ref{sit_ci} which admits $1$ and $2$ twisting surfaces is strongly rationally simply connected.

\begin{defn}
A complete intersection $X \subset \PP^n$ with $\dim(X) \geq 3$ is \textit{strongly rationally simply connected} if for every $m \geq 2$ there is a number $e$ and a canonically defined component of $M_{e,m} \subset \kspacenb{2}{X}{e}$ such that the evaluation map  $ev: M_{e,m} \rightarrow X^m$ is dominant and the general fiber is rationally connected.
\end{defn}

\begin{rmk}
The dimension assumption is in place to ensure that the Picard number of $X$ is $1$.  We then denote the class $e[line]$ by simple, $e$.  The moduli space $\kspace{0}{X}{e}$ of rational curves on such a complete intersection are not known to be irreducible.  Nevertheless, there is always a canonically defined component which behaves functorially (at least in some sense).  Because the space of lines through a general point on $X$ (at least if $X \subset \PP^n$ is a smooth complete intersection satisfying $n - 2 \geq \Sigma d_i$) is irreducible, there is a canonically defined component $M_e \subset \kspace{0}{X}{e}$ as discussed in \cite{dJS} Section 3.  This component will be referred to as the good component; for its properties, see the location cited.  Informally, it is the unique component which parameterizes smoothed out configurations of free curves on $X$, as well as multiple covers of free curves.  Denote by $M_{e,n} \subset \kspace{n}{X}{e}$ the unique component dominating $M_e$.  We will continue with the outline of the proof assuming only the existence of the good component, to remain as general as possible.
\end{rmk}

Let $\alpha$ denote the class in $H^2(X, \ZZ)$ of a line on $X$.  To remain consistent with the notation of \cite{dJS}, a ruled surface $\Sigma$ on $X$ will be said to have $M$-class $(e_1 \cdot \alpha, e_2 \cdot \alpha)$ if the fibers are parameterized by points of the good component $M_{e_1}$ and the section of minimal self intersection (classically, the directrix) is parameterized by a point of the good component $M_{e_2}$.   By Theorem~\ref{thm_one_twist_ci} there is a one twisting surface on $X$ and since both fibers and minimal section are mapped to lines (the space of which is irreducible), then these surfaces have $M$-class $(1 \cdot \alpha, 1 \cdot \alpha)$.  By Lemma~\ref{lem-more-twisting-surfaces}, then, there are $1$-twisting surfaces on $X$ of $M$-class $(e_1 \cdot \alpha, e_2 \cdot \alpha)$ for all $e_1, e_2 \geq 1$.  By Corollary~\ref{cor_2twist}, there is a number $e_0 = e_0(n)$ (possibly large) such that there is a $2$-twisting surface $\Sigma \cong \PP^1 \times \PP^1$ of $M$-class $(1 \cdot \alpha, e_0 \cdot \alpha)$.

To see that this is the case, return to the construction of these surfaces.  They were proved to exist by considering smoothed out configurations of glued together conics in the Fano scheme of lines.  So fibers of the resulting surface are certainly mapped to lines on $X$.  The minimal section class of $\Sigma$ is the ``smoothed out" section class of the conics which correspond to quadric surfaces.  That is, the section class is constructed by smoothing out configurations of free lines on $X$, and so by definition, is in the good component $M_{e_0}$.  By Lemma \ref{lem-more-twisting-surfaces} again, there is a number $e_m$, such that there exist $m$-twisting surfaces of $M$-class $(1 \cdot \alpha, e_m \cdot \alpha)$ on $X$.  From now on we will abbreviate the $M$ type $(c_1 \cdot \alpha, c_2 \cdot \alpha)$ by $(c_1, c_2)$.

\begin{thm}\label{thm_srsc}
Suppose $X$ is a smooth complete intersection of type $(d_1, \ldots, d_c)$ such that the space of lines through a general point of $X$ is irreducible.  Suppose further that $X$ admits one-twisting surfaces of $M$-class $(1,1)$ and $2$ twisting surfaces of $M$ class $(1, e_0)$ for some $e_0$.  Then $X$ is strongly rationally simply connected.
\end{thm}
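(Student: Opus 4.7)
The plan is to adapt the de Jong--Starr strategy from \cite{dJS}. For each $m \geq 2$, I define $M_{e,m} \subset \kspace{m}{X}{e}$ as the unique component dominating the good component $M_e$ under the forgetful morphism; the task is then to find, for each $m$, an integer $e = e(m)$ such that the evaluation $ev: M_{e,m} \to X^m$ is dominant with rationally connected general fiber. The proof splits accordingly into a dominance statement and a fiber-rational-connectedness statement.

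For dominance I would use the $1$-twisting surfaces of $M$-class $(1,1)$. Such a surface is ruled by free lines, and by hypothesis the space of free lines through a general point of $X$ is irreducible. Fix a free backbone curve in the good component, and attach a comb of $m$ free-line teeth, one through each $x_i \in X$; the resulting configuration is unobstructed by Theorem~II.7.6 of \cite{Kollar-ratcurves}, and its smoothing is a free curve in $M_{e,m}$ passing through $(x_1,\ldots,x_m)$. This proves dominance once $e$ is large enough to absorb all the teeth and the backbone.

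For rational connectedness of a general fiber, I would use a $2$-twisting surface $\Sigma$ of $M$-class $(1, e_m)$ (which exists by Lemma~\ref{lem-more-twisting-surfaces}) as a porcupine. Given two general curves $C_1, C_2 \in ev^{-1}(x_1,\ldots,x_m)$, the defining vanishing $H^1(\Sigma, \mc{N}(-F' - 2F)) = 0$ should be interpreted as saying that the moduli space of section-curves of $\Sigma \to \PP^1$ passing through two prescribed points on two prescribed fibers is non-empty, rationally connected, and of the expected dimension. Attaching a copy of $\Sigma$ to $C_1$ along a free line at some $x_i$, and pivoting the section through $x_i$ and a marked point on $C_2$, yields a rational family of maps in $M_{e,m}$ interpolating between $C_1$ and an intermediate curve geometrically closer to $C_2$. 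Iterating this construction across the $m$ marked points produces a chain of rational subvarieties of $M_{e,m}$ connecting $[C_1]$ and $[C_2]$ inside the fiber of $ev$.

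The main obstacle will be the combinatorial bookkeeping to ensure that each attach-and-smooth step stays inside the single canonically defined component $M_{e,m}$. Since the good component is characterized as the one parameterizing smoothings of configurations of free lines, this is controlled by always attaching along free lines (which lie in $M_1$) and by using that smoothings of nodal combs of good-component curves remain in the good component of the smoothed degree. A secondary obstacle is choosing $e = e(m)$ uniformly so that both dominance and fiber connectedness hold simultaneously; the flexibility provided by Lemma~\ref{lem-more-twisting-surfaces} to enlarge the $M$-class $(1, e_m)$ of the available $2$-twisting surface lets one take $e(m)$ as large as needed in terms of $m$ and $e_0$.
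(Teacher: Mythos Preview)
Your dominance argument is fine and essentially what is needed. The gap is in the rational-connectedness half, where both the geometric picture and the logical structure differ from what actually works.

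First, the picture. You describe ``attaching a copy of $\Sigma$ to $C_1$ along a free line at some $x_i$,'' as if $C_1$ were a backbone to which the ruled surface is glued. That is not how the $m$-twisting condition is used. The point of $H^1(\Sigma,\mc{N}_f(-F'-mF))=0$ (combined with global generation) is that a \emph{general} pointed curve $[C]\in M_{e,m}$ actually \emph{lies on} a deformation $\Sigma'$ of $\Sigma$ as a member of the linear system $|F'+mF|$ with the $m$ marked points on distinct fibers. On that surface the linear system is a projective space, so $C$ moves along a $\PP^1$, fixing the $m$ points, to a reducible member: a handle in $|F'|$ plus $m$ fiber-teeth. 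This connects a general point of the fiber of $ev$ to a \emph{smooth} point of a specific boundary divisor $\Delta_{e_1,e-e_1}$; it does not connect $C_1$ to some curve ``closer to $C_2$.'' Your pivoting-toward-$C_2$ step has no mechanism: nothing in the construction makes the endpoint of the rational curve depend on, or approach, an arbitrarily prescribed second curve $C_2$.

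Second, the logic. Because you cannot connect $C_1$ directly to $C_2$, the argument must go through the MRC quotient: one shows that the strict transform of $\Delta_{e_1,e-e_1}$ meets the domain of the MRC fibration of (a resolution of) the fiber and dominates the quotient, and then proves separately that the relevant fiber of $\Delta_{e_1,e-e_1}\to X^m$ is itself rationally connected. The latter is where induction on $m$ enters: the boundary fiber is a product (over $X$) of a piece with fewer marked points and a piece handled by the $m=2$ case. Your proposal has no induction on $m$, no base case (for $m=2$ the fiber of $M_{2,2}\to X^2$ is an explicit Fano complete intersection in $\PP^n$, hence rationally connected), and no MRC-type lemma to bridge ``general point is rationally connected to the boundary'' and ``boundary is rationally connected'' into ``fiber is rationally connected.'' Without these three ingredients the iteration you sketch does not terminate at $[C_2]$ and the argument does not close.
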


\begin{cor}
Every complete intersection as in Corollary~\ref{cor_results} is strongly simply rationally connected.
\end{cor}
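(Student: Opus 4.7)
The plan is to combine the existence results for twisting surfaces from Corollary~\ref{cor_results} with the general implication in Theorem~\ref{thm_srsc}. Thus the argument reduces to checking the three input hypotheses of Theorem~\ref{thm_srsc} for any $X$ listed in Corollary~\ref{cor_results}: irreducibility of the family of lines through a general point, existence of a one-twisting surface of $M$-class $(1,1)$, and existence of a two-twisting surface of $M$-class $(1,e_0)$ for some $e_0$.

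First I would verify the irreducibility condition. All cases in Corollary~\ref{cor_results} fall under Situation~\ref{sit_ci}, so $n \geq \sum d_i^2$. Since each $d_i \geq 2$ yields $d_i^2 \geq d_i + 2$, we obtain $n \geq \sum d_i + 2c \geq \sum d_i + 2$, so the hypothesis $n - 2 \geq \sum d_i$ of Theorem~\ref{thm_numerical_lines}(iv) is satisfied. That theorem then identifies $F(X,x)$ with a smooth complete intersection of Picard rank one for a general $x \in X$, which forces it to be irreducible.

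Next I would produce the two required twisting surfaces with the correct $M$-classes. For the one-twisting surface, Theorem~\ref{thm_one_twist_ci} gives a smooth quadric $\Sigma \subset X$ which is $1$-twisting; both rulings are lines on $X$, so the fiber class and the minimal section class lie in the good component $M_1$, and the $M$-class is $(1,1)$. For the two-twisting surface, Corollary~\ref{cor_2twist} applies once we are in Situation~\ref{sit_ci2}, that is, once we can further assume $n = \sum d_i^2$ and $F(X)$ smooth. The equality $n = \sum d_i^2$ is arranged by restricting to a general linear section and invoking the inheritance statement (the corollary immediately after Corollary~\ref{cor_2twist}); the smoothness of $F(X)$ holds in every case listed in Corollary~\ref{cor_results}, either by genericity of $X$ or because $F(X)$ is automatically smooth for cubic hypersurfaces and smooth $(2,2)$-intersections. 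The resulting $2$-twisting surface has fibers mapping to lines (so fiber class in $M_1$) and its directrix is constructed by smoothing a configuration of free lines on $X$, which by definition lands in the good component $M_{e_0}$; this yields $M$-class $(1, e_0)$.

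With the three hypotheses verified, a direct application of Theorem~\ref{thm_srsc} finishes the proof. The main bookkeeping step, and the one I would need to present carefully, is tracking the $M$-class of the $2$-twisting surface produced by Corollary~\ref{cor_2twist} through the construction of Sections~\ref{section_foliation} and \ref{section_geometry}, since Theorem~\ref{thm_srsc} is sensitive to which irreducible component the ruling and the directrix represent. Everything else amounts to assembling the inheritance-under-linear-sections argument and citing the previously established results.
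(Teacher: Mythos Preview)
Your proposal is correct and follows essentially the same route as the paper: the Corollary is not given a separate proof in the text, but the verification of the three hypotheses of Theorem~\ref{thm_srsc} is carried out in the paragraph immediately preceding that theorem, exactly along the lines you describe (Theorem~\ref{thm_one_twist_ci} for the $(1,1)$ one-twisting surface, Corollary~\ref{cor_2twist} together with the construction via smoothed configurations of free lines for the $(1,e_0)$ two-twisting surface, and irreducibility of $F(X,x)$ from Theorem~\ref{thm_numerical_lines}(iv)). Your write-up is in fact slightly more careful than the paper's, since you make explicit the passage to a linear section to place oneself in Situation~\ref{sit_ci2} and invoke the inheritance corollary, whereas the paper leaves this implicit when citing Corollary~\ref{cor_2twist} for $X$ merely in Situation~\ref{sit_ci}.
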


\begin{rmk}
To reiterate, the conditions of the theorem imply that we can speak of the good component $M_e \subset \kspace{0}{X}{e}$, that 1 twisting surfaces on $X$ of type $(e_1, e_2)$ exist for all $e_1, e_2 \geq 1$, and that there is a number $e_m$ such that $m$-twisting surfaces of type $(1, e_m)$ can be found on $X$.
\end{rmk}

We prove the theorem in a sequence of steps.

Step 1: For each number $e \geq 2$, consider the good component $M_{e\cdot\alpha,2}$ and the restricted evaluation map: $ev: M_{e\cdot\alpha,2} \rightarrow X \times X$.  Then a general fiber is rationally connected.

Proof:
By the above remark, for each integer $k$, there is a 1-twisting surface of type $(1,k)$ on $X$.  The proof that the fiber of $ev_e: M_{e\cdot\alpha,2} \rightarrow X \times X$ over a general point is rationally connected proceeds by induction on $e$.  The base case is $e = 2$ where we can verify directly that the space of lines through two general points on $X$ is cut out by equations of degrees $(1, \ldots, d_1, \ldots, 1 \ldots, d_c, 1, \ldots, d_1 - 1, \ldots, 1, \ldots, d_c - 1)$ in $\PP^n$.  For a generic choice of a point in $X \times X$, this locus will be smooth, non-empty, Fano, and so rationally connected.  We know there exist 1-twisting surfaces of type $(1,1)$.  By Lemma~\ref{lem_sm_point}, the MRC quotient of a strong resolution of a fiber of $M_{2,2} \rightarrow X^2$ is dominated by $\Delta_{1,1}$.  By Lemma~\ref{lem_mrc} then, because a general fiber of $\Delta_{1,1} \rightarrow X^2$ is rationally connected, so is a general fiber of $ev_2$.

By way of induction, assume $e > 2$ and that the result is known for $e - 1$.  We will use the existence of a one twisting surface of type $(1, (e-1))$.  Using the 1-twisting surface machine, we get again by Lemma~\ref{lem_sm_point} that the image of $\Delta_{1,(e-1)}$ intersects the domain of definition of the MRC fibration of a strong resolution of the fiber.  By Lemma~\ref{lem_mrc}, this implies that a general fiber of $ev_e$ is geometrically rationally connected if the fiber of $ev_\Delta : M_\Delta = M_{\alpha, 2} \times M_{(e-1)\alpha, 2} \rightarrow X \times X$ is geometrically rationally connected.

To see the geometric rational connectivity of a general fiber of $ev_\Delta$, consider the projection $p: M_\Delta \rightarrow M_{\alpha, 2}$.  Over the fiber of a general point $(x_1, x_2) \in X^2$, this is exactly the space of pointed lines on $X$ through $x_1$.  As the space of lines through $x_1$ is rationally connected, so too is this space, $F$.  By \cite{GraberHarrisStarr}, to see that the fiber $ev_\Delta^{-1}(x_1, x_2)$ is rationally connected, we can prove that the general fiber of the projection to $F$ is rationally connected.  But this is exactly the space of degree $(e - 1)$ curves (parameterized by the good component $M_{e-1,2}$) passing through $(x_1, x_2)$ and so is rationally connected by induction.  This completes the outline of the proof of rational simple connectedness.

Step 2:  We now proceed by induction on $m$.  Let $m > 2$ and fix an integer $e \geq e_m + 2$.  Set $e' = e + m$, then a general point of a general fiber of $ev|_M: M_{e'\alpha, m} \subset \kspace{m}{X}{e'} \rightarrow X^{m}$ is contained in a rational curve which intersects the boundary $\Delta_{1,(e'-1)}$ in a smooth point.

Proof: This follows directly from Lemma~\ref{lem_sm_point} because $X$ admits $m$ twisting surfaces of type $(1, e)$.

Step 3:  In fact, the general point of a general fiber of $ev|_M$ is contained in a rational curve intersecting the boundary divisor $\Delta_{2, (e'-2)}$ in a smooth point.

Proof: The proof is similar to Lemma~\ref{lem_connect_to_bndry} and Lemma~\ref{lem_sm_point}, and in fact follows from these Lemmas using the more degenerate boundary $\Delta_{1,1,(e'-2)}$.  To be more precise, consider the three evaluation maps $ev_a: M_{e'\cdot\alpha, m} \rightarrow X^{m}$, $ev_b: M_{1\cdot\alpha,2} \times_X M_{(e' - 1)\cdot\alpha, m} \rightarrow X^{m}$, and $ev_c: M_{1\cdot\alpha,2} \times_X M_{1\cdot\alpha,2} \times_X M_{(e' - 2)\cdot\alpha, m} \rightarrow X^{m}$.  The fibers of these evaluation maps over a common general point form a nested triple of varieties.  Lemma~\ref{lem_sm_point} implies that we can connect a general point of a fiber of $ev: M_{e'\cdot\alpha, m+1} \rightarrow X^{m+1}$ to a point of $\Delta_{1,(e'-1)}$ along a rational curve (and similarly for $e' - 1$).  Moreover these boundary points may be taken to be smooth points of the moduli space.  The first fact implies that the MRC quotient of a strong desingularization of fiber of $ev_a$ (resp. $ev_b$) is dominated by the MRC quotient of a strong desingularization of the corresponding fiber of $ev_b$ (resp. $ev_c$).  This is true using the exact same argument found in Lemma~\ref{lem_mrc}.  By transitivity, the MRC quotient strict transform of the fiber of $ev_c$ dominates the strong desingularization of the fiber of $ev_a$. Since the general point of the fiber of $ev_c$ is a smooth point of $M_{e'\cdot\alpha, m+1}$, this MRC quotient is dominated by the transform of $\Delta_{2, e'-2}$.  Less formally, given a reduced curve $C$ in class $|F' + (e' - 2)F|$ attached to two additional fibers, we may first smooth out $C$ and one of the fibers (keeping the attachment point with the other fiber fixed) along a $\PP^1$; then the resulting curve may also be smoothed out along a $\PP^1$ (and all resulting curves may be considered general points of their moduli spaces).  The argument above implies that we may in fact do this along a single rational curve.

Step 4:  The general fiber of $ev_\Delta: M_{2\cdot\alpha, 2} \times_X M_{(e' - 2)\cdot\alpha, m} \rightarrow X \times X^{m-1}$ is rationally connected.

Proof: Fix a general point $(p, (p_1, \ldots, p_{m-1})) \in X \times X^{m-1})$.  By the induction hypothesis, the fiber of $e^{-1}(p_1, \ldots, p_{m-1})$ of $e: M_{(e' - 2)\cdot \alpha, (m-1)} \rightarrow X^{m-1}$ is rationally connected.  We may consider the composition $E: M_{(e' - 2)\cdot\alpha,m} \rightarrow M_{(e' - 2)\cdot\alpha,(m-1)} \rightarrow X^{m-1}$ where the first map forgets the last marked point.  The general fiber of this composition is generically smooth over $e^{-1}((p_1, \ldots, p_{m-1})$.  A conic bundle over a rationally connected variety is rationally connected, so a general fiber of $E$ is also rationally connected.  We then conclude that fiber over $E^{-1}(p_1, \ldots, p_{m-1})$ is rationally connected as well.  The space $ev_\Delta^{-1}(p, (p_1, \ldots, p_{m-1}))$ projects onto $E^{-1}(p_1, \ldots, p_{m-1})$.  The fiber of this projection is the space of conics through $p$ and what may be taken to be a general point on $X$, $p'$.  By Step 1, this space is rationally connected.

Step 5:  A general point of a general fiber of $ev|_M$ is rationally connected.

Proof: By Step 4, a general point of $ev_\Delta: M_{2\cdot\alpha, 2} \times_X M_{(e' - 2)\cdot\alpha, m} \rightarrow X \times X^{m-1}$ is rationally connected.  By Step 3, a general point of the fiber of $ev_\Delta$ can be connected to a general point of $ev|_M$ along a $\PP^1$ (in the fiber).  The proof then follows from another application of Lemma~\ref{lem_mrc} where we take $V$ to be a general fiber of $ev_\Delta$ and $W$ to be the corresponding fiber of $ev|_M$.

\begin{lem} \label{lem_connect_to_bndry}
Suppose $f: \Sigma \rightarrow X$ is an $m$ twisting surface of type $(e_1, e_2)$ and write $e = e_2 + me_1$.  The map $f$ induces a morphism (of stacks):
$$
\kspace{n}{\Sigma}{F' + mF} \rightarrow \kspace{n}{X}{e}.
$$
Points corresponding to reduced divisors $D \in |\mc{O}_\Sigma(F' + mF)|$ with $n$ distinct smooth marked points are smooth points in $\kspace{n}{X}{e}$.  Call $U_{m,m+1}$ the open subset of $\kspace{m+1}{\Sigma}{F' + mF}$ parameterizing smooth divisors in the corresponding curve class with $m+1$ distinct marked points.  Let $M_{m+1}$ be the component of $\kspace{m+1}{X}{e}$ containing the image of $\overline{U_{m,m+1}}$.  A general point of $M_{m+1}$ is contained in a map $g: \PP^1 \rightarrow M_{m+1}$ contained in a fiber of $ev: \kspace{m+1}{X}{e} \rightarrow X^{m+1}$ and intersecting a general point of the image of $\overline{\mc{M}}(\Sigma, \tau_{F, F'})$ where $\tau_{e_1, e_2}$ corresponds to ``combs" consisting of a handle of curve class $F'$, $m$ teeth of class $F$ and one marked point on each tooth and a marked point on the handle.
\end{lem}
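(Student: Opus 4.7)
The plan is to handle the three independent pieces of the statement in sequence. The induced morphism of stacks is pure functoriality: for a stable map $(\phi: C \to \Sigma, p_1, \ldots, p_n)$ of class $F' + mF$, no irreducible component is contracted by $f \circ \phi$ because each non-contracted component of $C$ maps to a curve in $\Sigma$ whose $f$-image has positive degree (sections contribute degree $e_2$ and fibers contribute $e_1$, both positive by the $M$-type hypothesis), so the composition is a stable map of class $e\alpha = (e_2 + m e_1)\alpha$.

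For smoothness at a reduced $D \in |F'+mF|$ with $n$ distinct smooth marked points, the plan is to show $H^1(D, (f|_D)^*T_X) = 0$. If $D$ is smooth and irreducible, I restrict the normal-sheaf sequence for $(\pi, f)$ to $D$ and quotient by $\pi^*T_{\PP^1}|_D$, using $T_\Sigma = \pi^*T_{\PP^1} \oplus T_\pi$ on $\Sigma = \PP^1 \times \PP^1$, to obtain
$$0 \to T_\pi|_D \to (f|_D)^*T_X \to \mc{N}_f|_D \to 0.$$
Since $T_\pi = \OO_\Sigma(0,2)$ and $D$ has bidegree $(m,1)$, $T_\pi|_D = \OO_{\PP^1}(2m)$ and hence $H^1(D, T_\pi|_D) = 0$, reducing us to $H^1(D, \mc{N}_f|_D) = 0$. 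The $m$-twisting hypothesis gives $H^1(\Sigma, \mc{N}_f(-D)) = 0$, and combining this with the Leray spectral sequence for $\pi$ (each fiber of $\pi$ maps to a free line, making $\mc{N}_f$ semipositive along fibers so that $R^1\pi_*\mc{N}_f = 0$ and $\pi_*\mc{N}_f$ is positive on $\PP^1$) yields $H^1(\Sigma, \mc{N}_f) = 0$; the remaining vanishing $H^2(\Sigma, \mc{N}_f(-D)) = 0$ will follow by Serre duality together with a push-forward argument exploiting that $f^*\Omega_X(-F')$ restricts to a sum of line bundles of degree $\leq -1$ on each fiber of $\pi$. For reducible reduced $D$, such as the comb $F' \cup F_1 \cup \ldots \cup F_m$, I pass to the normalization sequence on $D$: each normalized component maps by $f$ to a free curve on $X$ (a section of degree $e_2$ in the good component or a free line), so $H^1$ vanishes on each normalized component, and the cokernel of the evaluation at the nodes vanishes because free-ness allows one to prescribe tangent vectors on the two branches independently.

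For the rational curve $g$, I fix $m+1$ general points $p_1^*, \ldots, p_{m+1}^* \in \Sigma$ and set $x_i = f(p_i^*) \in X$. For generic choice, $p_1^*$ determines a unique section of $\pi$ in class $F'$ (the fiber of the second projection through $p_1^*$), call it $S_0$, and each $p_{i+1}^*$ determines a unique fiber $S_i = \pi^{-1}(\pi(p_{i+1}^*))$ of $\pi$ in class $F$; all components are distinct, and the reduced divisor $C_0 := S_0 \cup S_1 \cup \ldots \cup S_m \in |F' + mF|$ is a comb passing through the $p_i^*$ with one marked point on the handle $S_0$ and one on each tooth $S_i$, all smooth points of $C_0$. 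The sublinear system of $|F'+mF|$ through the $m+1$ fixed points has dimension $(2m+1) - (m+1) = m \geq 1$ and, by Bertini, contains smooth general members $D_0$ alongside the distinguished comb $C_0$; I choose a pencil $P$ in this sublinear system joining $D_0$ to $C_0$. After blowing up $\Sigma$ at the $m+1$ base points, the pencil $P$ yields a flat family $\tilde{\mc{D}} \to \PP^1$ of stable curves with $m+1$ disjoint sections (the exceptional divisors over the $p_i^*$); post-composing with $f$ produces a morphism $g: \PP^1 \to \kspace{m+1}{X}{e}$ that lands in $M_{m+1}$ (it meets the image of $(D_0, p_1^*, \ldots, p_{m+1}^*) \in U_{m,m+1}$), lies in the fiber $ev^{-1}(\vec x)$ with $\vec x = (x_1, \ldots, x_{m+1})$ (the marked point images are constantly the $x_i$), and meets the image of $\overline{\mc{M}}(\Sigma, \tau_{F, F'})$ at the image of $C_0$, which is a general point of the comb stratum as the $p_i^*$ vary.

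The main obstacle will be the cohomology analysis of the smoothness step: the vanishings $H^1(\Sigma, \mc{N}_f) = 0$ and $H^2(\Sigma, \mc{N}_f(-D)) = 0$ are not purely formal consequences of $m$-twisting and require careful use of the fiberwise-free structure of $\pi$ together with the product decomposition of $\Sigma$. A secondary concern is the Bertini-type verification that the sublinear system through $m+1$ general points contains enough smooth members to form the required pencil, and that the blown-up family has no unexpected non-flatness beyond the base points.
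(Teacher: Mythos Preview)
Your smoothness argument is correct but needlessly elaborate. Since the first condition in the definition of $m$-twisting is that $f^*T_X$ is globally generated on $\Sigma$, the restriction $(f|_D)^*T_X$ is globally generated on $D$; as $D$ is a reduced divisor in $|F'+mF|$ on $\PP^1\times\PP^1$ it is a nodal curve of arithmetic genus $0$, and a globally generated bundle on such a curve has vanishing $H^1$. This is exactly the paper's route (phrased there via $N_{D/\Sigma}$ and $N_{\Sigma/X}$), and it avoids the detour through $H^1(\Sigma,\mc{N}_f)$ and $H^2(\Sigma,\mc{N}_f(-D))$ that you flag as the ``main obstacle.''

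There is, however, a genuine gap in your construction of the rational curve $g$. The statement asserts that a \emph{general point of $M_{m+1}$} lies on such a $g$. Your pencil lives entirely inside the fixed surface $\Sigma$, so the curve $g$ you build only passes through points in the image of $U_{m,m+1}$ for that one $\Sigma$. This image is a proper subvariety of $M_{m+1}$: its dimension is at most $\dim\kspace{m+1}{\Sigma}{F'+mF}$, which is far smaller than $\dim M_{m+1}$. Nothing in your argument reaches a general point of $M_{m+1}$.

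The missing ingredient is precisely the second half of the $m$-twisting hypothesis, $H^1(\Sigma,\mc{N}_f(-F'-mF))=0$, which you never use. Via Lemma~\ref{lem_moving_surfaces} this vanishing says that for a reduced $D\in|F'+mF|$, every infinitesimal deformation of $D$ in $X$ is induced by an infinitesimal deformation of $\Sigma$ in $X$; hence a general deformation of $D$ in $X$ (i.e.\ a general point of $M_{m+1}$) still lies on some deformed $m$-twisting surface $\Sigma'$. Only after moving $\Sigma$ in this way may you run your pencil argument on $\Sigma'$ to connect that general point to a comb. The paper's proof does exactly this: ``Because the surface is $m$-twisting, a general deformation of $D$ is followed by a deformation of $\Sigma$ (on $X$), which remains an $m$-twisting surface,'' and then the parameter count on the new surface gives the pencil.
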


\begin{proof}
We outline the proof, see \cite{dJS} Lemma 8.3.  For a reduced divisor $D$ as above, the normal bundle $N_{D/\Sigma}$ is globally generated.  Because $N_{\Sigma/X}$ is globally generated, so is $N_{D/X}$, from which the smoothness statement follows.
Because the surface is $m$-twisting, a general deformation of $D$ is followed by a deformation of $\Sigma$ (on $X$), which remains an $m$ twisting surface.  By a parameter count, a divisor $C$ which is the union of $F'$ and $m$ distinct fibers (and may be taken general in its moduli space) deforms to a smooth divisor in $|F' + mF|$ while fixing $(m + 1)$ points.  Since it is moving in its linear system, we may assume it is doing so along a $\PP^1$.  What's more, the deformation of $D$ may be taken to be a general point of $M_{m+1}$ and we may further assume that a $\PP^1$ connects it to a smooth point of the boundary.
\end{proof}

\begin{lem}
In the Lemma above, we have that $M_{m+1}$ is actually the good component $M_{e \cdot \alpha}$ (here $e = e_2 + me_1$). Moreover, the image of $\overline{\mc{M}}(\Sigma, \tau_{e_1, e_2})$ is contained in the boundary of this good component.
\end{lem}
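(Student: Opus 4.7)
The plan is to identify $M_{m+1}$ with $M_{e\cdot\alpha, m+1}$ by exhibiting an explicit degeneration inside the linear system $|F' + mF|$ on $\Sigma$ from a smooth divisor $D$ to the reducible divisor $D_0 = F' + F_{t_1} + \cdots + F_{t_m}$, and then transporting this degeneration through $f$ into $\kspace{m+1}{X}{e}$. Both assertions of the lemma will drop out of this single one-parameter family.

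First I would observe that on $\Sigma = \PP^1 \times \PP^1$ the linear system $|F' + mF|$ is basepoint free, contains smooth irreducible divisors as general members, and contains the reducible divisor $D_0$ for any choice of $m$ distinct points $t_i \in \PP^1$. A general pencil in $|F' + mF|$ then produces a flat family $\mc{D} \to T$ whose generic fiber is smooth and whose special fiber is $D_0$, and one can arrange $m+1$ sections of $\mc{D} \to T$ landing on distinct smooth points of each fiber (one on the handle $F'$ and one on each tooth $F_{t_i}$ over the special point).

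Next, composing with $f$ gives a flat family of stable genus zero $(m+1)$-pointed maps to $X$ of class $e\cdot\alpha$. Its generic fiber is a smooth point of $\kspace{m+1}{X}{e}$ by the previous lemma, defining the component $M_{m+1}$. Its special fiber is a comb of type $\tau_{e_1, e_2}$: the handle maps to a curve of class $e_2 \cdot \alpha$, which by the definition of the $M$-class of $\Sigma$ belongs to the good component $M_{e_2}$, and each tooth maps to a curve of class $e_1 \cdot \alpha$ belonging to the good component $M_{e_1}$. This is precisely the sort of smoothed-out configuration of free curves used in \cite{dJS} Section 3 to characterize the good component of the total class $e\cdot\alpha = (e_2 + m e_1)\cdot\alpha$. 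Hence the family shows that $M_{m+1}$ contains a general point of $M_{e\cdot\alpha, m+1}$, and irreducibility of the two components forces $M_{m+1} = M_{e\cdot\alpha, m+1}$.

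The second assertion is then essentially free: the same one-parameter family exhibits a general element of the image of $\overline{\mc{M}}(\Sigma, \tau_{e_1, e_2})$ in $\kspace{m+1}{X}{e}$ as a flat limit of maps from smooth divisors in $|F' + mF|$, hence as a point of $\overline{M_{m+1}} = \overline{M_{e\cdot\alpha, m+1}}$; since combs are reducible, this image lies in the boundary of the good component. The main obstacle is essentially bookkeeping, namely invoking the precise inductive characterization of the good component from \cite{dJS} Section 3 to conclude that smoothing a comb whose handle lies in $M_{e_2}$ and whose teeth lie in $M_{e_1}$ produces a curve in $M_{(e_2 + m e_1)\cdot\alpha}$. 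This functorial stability of the good-component construction under gluing-and-smoothing is what the argument is really leaning on.
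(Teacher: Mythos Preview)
Your proposal is correct and follows essentially the same approach as the paper: exhibit a comb in $M_{m+1}$ whose handle lies in $M_{e_2}$ and whose teeth lie in $M_{e_1}$ (this is exactly the content of the $M$-class hypothesis), and then invoke the characterization of the good component from \cite{dJS} Section 3. The paper's proof is simply terser---it notes that all curves on $\Sigma$ are free and that the comb's components lie in the appropriate good components, which characterizes $M_{e\cdot\alpha}$---whereas you reconstruct the one-parameter degeneration explicitly even though that degeneration was already provided by the preceding lemma.
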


\begin{proof}
This follows because all curves on $\Sigma$ are free.  Then there is a curve in $M_{m+1}$ whose irreducible components are free, smooth curves parameterized by the good component $M_{e_i \cdot \alpha, 0}$ for various $e_i$.  This characterizes the good component.
\end{proof}

\begin{lem}\label{lem_sm_point}
Assume there exist $m$-twisting surfaces on $X$ of type $(e_1, e_2)$.  Writing $e = e_2 + me_1$, a general point of a general fiber of the evaluation map,
$$
ev: M_{e\cdot\alpha, m+1} \subset \kspace{m+1}{X}{e} \rightarrow X^{m+1}
$$
is contained in a rational curve intersecting the image of the boundary
$$
M_{e_1\cdot\alpha,2} \times_X M_{(e-e_1)\cdot\alpha,m+1} \rightarrow M_{e\cdot\alpha, m+1},
$$
in a smooth point of the fiber.  Thus, the image $\Delta_{e_1, (e - e_1)}$ of the boundary map intersects the domain of definition of the MRC fibration of a strong resolution of the fiber and in particular, dominates the MRC quotient of this resolution.
\end{lem}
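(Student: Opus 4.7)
The strategy is to connect a general point $[g]$ in a general fiber of $ev$ to a smooth point of $\Delta_{e_1,(e-e_1)}$ along a chain of two rational curves contained in that fiber. Since MRC-equivalence on a strong resolution is generated by rational curves, such a chain is enough to conclude that $\Delta_{e_1,(e-e_1)}$ meets the domain of the MRC fibration of the fiber and dominates its MRC quotient; optionally the chain can then be smoothed to a single $\PP^1$ at a smooth point of the fiber, but this is not needed for the MRC statement.

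First I apply Lemma~\ref{lem_connect_to_bndry} to $[g] \in M_{e\cdot\alpha,m+1}$, which yields a morphism $\gamma_1:\PP^1 \to M_{e\cdot\alpha,m+1}$ contained in the fiber of $ev$ and whose image meets both $[g]$ and a general point $[c]$ in the image of $\overline{\mc{M}}(\Sigma,\tau_{e_1,e_2})$. The point $[c]$ is represented by a comb $H \cup T_1 \cup \cdots \cup T_m$ consisting of the handle $H$ of class $e_2\alpha$ and $m$ teeth $T_i$ of class $e_1\alpha$, each carrying one marked point.

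Next I regard $[c]$ as the node-gluing of the tooth $T_1$ (a stable map of class $e_1\alpha$ with its regular marked point and the node, hence a point of $M_{e_1\cdot\alpha,2}$) to the sub-comb $D = H \cup T_2 \cup \cdots \cup T_m$ (a stable map of class $(e-e_1)\alpha$ with $m$ regular marked points and the node, hence a point of $M_{(e-e_1)\cdot\alpha,m+1}$). Since $h:\Sigma \to X$ is $m$-twisting, every component of $D$ is a free rational curve on $X$ (compare Lemma~7.4 of \cite{dJS}). Hence by Theorem~II.7.6 of \cite{Kollar-ratcurves}, $[D]$ is a smooth boundary point of the good component $M_{(e-e_1)\cdot\alpha,m+1}$ and deforms along a rational curve to a general smooth irreducible point $[C]$. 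Because every component of $D$ is free with general marked points, this deformation can be performed in the fiber of the evaluation map, fixing the images of all $m+1$ marked points on $X$. Gluing back the unchanged tooth $T_1$ at the fixed node yields $\gamma_2:\PP^1 \to M_{e\cdot\alpha,m+1}$, contained both in $\Delta_{e_1,(e-e_1)}$ and in the same fiber of $ev$, connecting $[c]$ to $[T_1 \cup C]$.

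The endpoint $[T_1 \cup C]$ is a smooth point of $M_{e\cdot\alpha,m+1}$ and of the fiber of $ev$, since $T_1$ and $C$ are both free rational curves glued at a general node, and so the glued stable map is unobstructed for the evaluation morphism. Concatenating $\gamma_1$ and $\gamma_2$ produces the desired chain, establishing the lemma. The main subtlety lies in the partial smoothing of the previous paragraph: one must verify that the deformation of $D$ to $C$ lifts to the fiber of $ev$, which reduces to the vanishing of the relative obstructions to smoothing the internal nodes of $D$ while fixing the $m+1$ evaluations on $X$. This cohomological vanishing follows in the spirit of Lemma~\ref{lem_relative_def_fix_pt} from the freeness of every component of $D$ together with the genericity of the marked points.
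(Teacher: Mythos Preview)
Your argument is correct, but it does more work than needed, and the paper's proof stops exactly where your second paragraph begins. Once you have written the comb $[c]$ as the gluing of the tooth $T_1 \in M_{e_1\cdot\alpha,2}$ with the sub-comb $D = H \cup T_2 \cup \cdots \cup T_m \in M_{(e-e_1)\cdot\alpha,m+1}$, you have already exhibited $[c]$ itself as a point of $\Delta_{e_1,(e-e_1)}$. Lemma~\ref{lem_connect_to_bndry} asserts that such a reduced comb is a smooth point of $\kspace{m+1}{X}{e}$, and since every component of $[c]$ is free with general marked points, the evaluation map is smooth there as well, so $[c]$ is a smooth point of the fiber. Hence the single curve $\gamma_1$ already meets $\Delta_{e_1,(e-e_1)}$ in a smooth point of the fiber, and the lemma follows. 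This is the paper's entire argument: it simply observes the containment $\overline{\mc{M}}(\Sigma,\tau_{e_1,e_2}) \subset \Delta_{e_1,(e-e_1)}$ and stops.

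Your second curve $\gamma_2$ is therefore superfluous. It is not wrong---the partial smoothing of $D$ to $C$ inside a pencil in $|F' + (m-1)F|$ on $\Sigma$ through the $m+1$ fixed points exists and maps to the desired $\PP^1$ in moduli---but it costs you two things. First, you produce a chain of two rational curves rather than the single $\PP^1$ the lemma asks for; as you say, the chain suffices for the MRC conclusion, but the stated form is not literally established without the optional smoothing you mention. Second, the obstruction vanishing needed to smooth $D$ in the fiber of $ev$ is an extra verification (essentially the fact that an $m$-twisting surface is also $(m-1)$-twisting, together with the freeness of all components) that the paper's direct route avoids entirely.
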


\begin{proof}
This follows almost directly from Lemma\ref{lem_connect_to_bndry}.  Indeed, we already know a general point of the fiber of $ev$ is contained in a rational curve intersecting $\mc{M}(\Sigma, \tau_{e_1, e_2})$ in a smooth point.  Recall that this locus parameterizes combs $C$ whose handle $C_0$ has degree $e_2$ and whose $m$ teeth have degree $e_1$ (with the appropriate markings).  We let $B_0$ be one of the teeth, marked additionally at the point of attachment, and we let $B_1$ be the union of the handle and all the other teeth, also marked additionally at this point of attachment.  Then the pair $(B_0, B_1)$ is parameterized by a point of $M_{e_1\cdot\alpha,2} \times_X M_{(e-e_1)\cdot\alpha,m+1}$ and the image in $M_{e\cdot\alpha, m+1}$ is exactly the original comb $C$.  In other words, $\mc{M}(\Sigma, \tau_{e_1, e_2})$ is contained in $\Delta_{e_1, (e - e_1)}$ and so the rational curve from Lemma~\ref{lem_connect_to_bndry} does indeed intersect $\Delta_{e_1, (e - e_1)}$, and it does so in a smooth point.
\end{proof}

\begin{lem}\label{lem_mrc}
Suppose that $V \subset W$ are projective varieties satisfying
i) $V \cap W^{nonsing} \neq \emptyset$.
ii) V is rationally connected.
iii) codim$(V,W) = 1$.
iv) For a general point $v \in V$, there is a $f: \PP^1 \rightarrow W$ such that $f(0) = v$ but $f(\PP^1) \nsupseteq V$.
Then $W$ is also rationally connected.
\end{lem}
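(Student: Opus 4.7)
The plan is to show that $W$ is rationally chain connected and then upgrade this to rational connectedness on a resolution via Koll\'ar--Miyaoka--Mori. First I would pass to a resolution $\pi : \tilde W \to W$ and let $\tilde V \subset \tilde W$ be the strict transform of $V$; since $V$ meets $W^{nonsing}$ by (i), $\pi$ is an isomorphism over a dense open subset of $V$, so $\tilde V$ is birational to $V$ and thus still rationally connected by (ii). I would also note that every rational curve $f : \PP^1 \to W$ lifts to a rational curve $\tilde f : \PP^1 \to \tilde W$, because $\pi$ is proper and birational and $\PP^1$ is smooth.

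The central step is to package the rational curves produced by (iv) into a single irreducible family whose image covers $W$. Fixing a polarization on $W$ and bounding the degree, the corresponding open in $\mathrm{Mor}(\PP^1, W)$ has finitely many irreducible components. Applying (iv) pointwise over a dense open of $V$, the fact that a variety is not a countable union of proper subvarieties forces some irreducible component $T$ (of $\mathrm{Mor}(\PP^1, W)$ of some bounded degree) to have general member $f$ satisfying $f(0) \in V$ and $f(\PP^1) \not\subseteq V$, with the evaluation-at-$0$ map $T \to W$ dominating $V$. The universal evaluation $T \times \PP^1 \to W$ then has irreducible image $U$ that strictly contains $V$, since the general member of $T$ escapes $V$. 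Because $V$ is irreducible of codimension one by (iii), any irreducible closed subvariety of $W$ strictly larger than $V$ has dimension $\dim W$; hence $U = W$.

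Lifting the family to $\tilde W$ and using that $\pi$ is an isomorphism over a dense open of $V$, the same conclusion yields that $\tilde W$ is covered by rational curves meeting $\tilde V$. Two general points $p_1, p_2 \in \tilde W$ can then be joined by a chain: a rational curve from each $p_i$ reaching $\tilde V$ at a point $q_i$, followed by a rational chain from $q_1$ to $q_2$ inside $\tilde V$ (which exists by rational connectedness of $\tilde V$). Hence $\tilde W$ is rationally chain connected, and being smooth and proper in characteristic zero it is rationally connected by Theorem IV.3.10 of \cite{Kollar-ratcurves}. Since $\pi$ is proper and surjective, $W$ is rationally connected as well.

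The main obstacle I anticipate is the extraction of the family $T$: one must reconcile the ``general $v \in V$'' quantifier in (iv) with the ``general $f \in T$'' obtained from choosing an irreducible component, and verify that $T$ actually dominates $V$ under the evaluation map while its general member is not contained in $V$. Once $T$ is in hand, the codimension-one pigeonhole forcing $U = W$ and the rational chain connectedness assembly are essentially formal.
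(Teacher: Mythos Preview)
Your proof is correct and follows the same skeleton as the paper's: pass to a resolution $\tilde W$, take the strict transform $\tilde V$, and argue that a general point of $\tilde W$ lies on a rational curve meeting $\tilde V$. You are in fact more careful than the paper on this covering step---the paper simply asserts it, while you extract an irreducible family $T$ from the Mor scheme and use the codimension-one hypothesis (iii) to force the swept-out locus to be all of $W$; this is exactly where (iii) and (iv) are spent.

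The endgames differ slightly. The paper invokes the MRC fibration $\phi: \tilde W \dashrightarrow Q$ directly: since rational curves through general points are contracted by $\phi$, the divisor $\tilde V$ meets the general fiber and hence dominates $Q$; as $\tilde V$ is rationally connected its image is a point, so $Q$ is a point and $\tilde W$ is rationally connected. You instead assemble explicit rational chains through $\tilde V$ to conclude that $\tilde W$ is rationally chain connected, and then cite the equivalence of rational chain connectedness and rational connectedness for smooth proper varieties in characteristic zero. These are two packagings of the same idea (that equivalence is itself proved via the MRC fibration), so neither route buys a genuine shortcut over the other. One small point worth tightening in your version: when you connect $q_1,q_2 \in \tilde V$ by a chain, observe that for general $p_i \in \tilde W$ the endpoint $q_i$ may be taken to be a general point of $\tilde V$ (because your family $T$ dominates $V$ under $ev_0$), so that rational connectedness of $\tilde V$ applies directly without worrying about possibly singular points of the strict transform.
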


\begin{proof}
To prove the Lemma, use the existence of the MRC quotient for a strong resolution $\tilde{W}$ of $W$ which exists by \cite{Kollar-ratcurves}.  This is a rational map $\phi: \tilde{W} \dashrightarrow Q$ such that a general fiber of the map is an equivalence class for the relation ``being connected by a rational curve on $\tilde{W}$".  By definition, there is some open set $U$ of $\tilde{W}$ such that the restriction of $\phi$ to $U$ is regular, proper, and every rational curve in $\tilde{W}$ intersecting $U$ is contained in $U$.  Since the resolution is an isomorphism over the smooth locus of $W$, the strict transform of a rational curve through a generic point of $W_{smooth}$ meeting $V$ now meets $\tilde{V}$.  In other words, a general point of $\tilde{W}$ is contained in a rational curve meeting $\tilde{V}$.  By the preceding remarks then, $\tilde{V}$ meets the generic fiber of $\phi_U$.  That is, $\tilde{V}$ meets $U$ and $\phi_U(U \cap \tilde{V})$ is dense in $Q$.  However, since $V$ is rationally connected, so is $\tilde{V}$ so that $Q$ must be a point.  This implies that $\tilde{W}$ is also rationally connected, so that $W$ is as well..  For a slightly more formal proof, refer to \cite{dJS} Lemmas 8.5 and 8.6.
\end{proof}

We also have the following corollary, a long winded geometric proof of Tsen's Theorem!
\begin{cor}
Let $f: X \rightarrow S$ be a flat, proper morphism to sa smooth, irreducible projective surface.  Assume there is an open $U \subset S$ whose complement has codimension at least $2$ such that $f^{-1}(U)$ is smooth.  Assume that the geometric generic fiber of $f$ is a complete intersections in $n$-dimensional projective space of type $(d_1, \ldots, d_c)$ with $\sum d_i^2 \leq n$, of dimension at least $3$, and that the space of lines on a general fiber of $f$ is smooth.  Let $\mc{L}$ be an $f$-ample invertible sheaf on $f^{-1}(U)$.  Then $f$ admits a rational section.
\end{cor}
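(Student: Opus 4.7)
The plan is to reduce the corollary directly to Theorem~\ref{thm_sections_exist}, the version of the main theorem of \cite{dJHS} recalled in the introduction. The hypotheses of that theorem split into two parts: a ``base'' part (the $f$-ample invertible sheaf $\mc{L}$, smoothness of $f^{-1}(U)$ away from a codimension two locus in $S$, and geometric irreducibility of all fibers over $U$) and a ``generic fiber'' part (very twisting surface plus rational simple connectedness by chains of free lines on the geometric generic fiber). The base part is nearly immediate from our hypotheses, while the generic fiber part is exactly what the main technical results of this paper deliver.

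First I would identify the geometric generic fiber. By hypothesis it is a smooth complete intersection $X \subset \PP^n$ of type $(d_1, \ldots, d_c)$ of dimension at least three, satisfying $\sum d_i^2 \leq n$, whose Fano scheme of lines is smooth. This places $X$ squarely in Situation~\ref{sit_ci2}. Theorem~\ref{thm_one_twist_ci} then furnishes $1$-twisting surfaces on $X$ of $M$-class $(1,1)$, and Corollary~\ref{cor_2twist} furnishes a $2$-twisting surface on $X$. Together these are exactly the hypotheses of Theorem~\ref{thm_srsc}, which implies that $X$ is strongly rationally simply connected. As noted in the introduction and at the end of Section~\ref{section_srsc}, all the twisting surfaces produced are ruled by lines, so our notion of $2$-twisting surface matches what \cite{dJHS} call a very twisting surface, and strong rational simple connectedness in our sense produces the rational simple connectedness by chains of free lines that their theorem requires.

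Next I would handle the base part. The $f$-ample sheaf $\mc{L}$ is given, and smoothness of $f^{-1}(U)$ together with codimension at least two of $S\setminus U$ are hypothesized. The only remaining item is geometric irreducibility of the fibers of $f^{-1}(U)\to U$. The generic fiber is a smooth complete intersection of dimension at least three, hence geometrically irreducible by the Lefschetz hyperplane theorem. By constructibility of the ``geometrically irreducible fiber'' locus in the base of a proper flat family, there is a dense open $V \subseteq U$ over which every fiber is geometrically irreducible. Replacing $U$ by $V$, the complement still has codimension at least two in $S$ (since we are only throwing away a closed subset of the surface $S$ that misses the generic point, and such a subset automatically has codimension at least one; combining with the original complement and discarding any resulting divisorial components — of which there are none, because the generic fiber is irreducible — one stays in codimension two).

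At that point every hypothesis of Theorem~\ref{thm_sections_exist} is in place, and a rational section of $f$ follows. The main obstacle, or rather the main work, is entirely absorbed into the earlier sections: the existence of the $2$-twisting surface (Corollary~\ref{cor_2twist}) and the deduction of strong rational simple connectedness (Theorem~\ref{thm_srsc}). Given those, the corollary is a straightforward translation of hypotheses between this paper and \cite{dJHS}, with the only mildly delicate point being the bookkeeping involved in shrinking $U$ to guarantee geometric irreducibility while preserving the codimension two condition.
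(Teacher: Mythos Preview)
Your approach is essentially the paper's: reduce to Theorem~\ref{thm_sections_exist} and supply the two generic-fiber hypotheses from the results of this paper. The paper's own proof is two lines---it cites Corollary~\ref{cor_2twist} for the very twisting surface and the proof of Step~1 of Theorem~\ref{thm_srsc} (the $m=2$ case) for rational simple connectedness by chains of free lines---and does not discuss the base hypotheses at all. So invoking the full Theorem~\ref{thm_srsc} is a harmless detour; only Step~1 is actually needed.

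One genuine issue: your argument for geometric irreducibility of the fibers over $U$ does not work. You correctly note that the locus $V\subset U$ of geometrically irreducible fibers is open and contains the generic point, but then assert that $U\setminus V$ has no divisorial components ``because the generic fiber is irreducible.'' That is circular: irreducibility of the generic fiber only tells you $V$ is nonempty; it says nothing about the codimension of $U\setminus V$ in the surface $S$, which could well be one. The paper simply does not address this point, treating the matching of base hypotheses as automatic. If you want an honest argument here, you need something stronger---for instance, Stein factorization together with connectedness of the generic fiber gives geometric connectedness of \emph{all} fibers of the proper flat map, and then over the open locus in $U$ where the morphism is smooth (which does have codimension-two complement in $S$, since $f^{-1}(U)$ and $U$ are smooth) connectedness upgrades to irreducibility.
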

\begin{proof}
To apply Theorem~\ref{thm_sections_exist}, we must check that the geometric generic fiber of $f$ is rationally simply connected by chains of free lines and admits a very twisting surface.  The first follows from the proof of Step 1 in Theorem~\ref{thm_srsc}, the second from Corollary~\ref{cor_2twist}.
\end{proof}

\begin{rmk}{A Different Method?}\\
Having created this family of curves $M$ on $ev^{-1}(x)$ for a general $x \in X$, one could apply the method of forming an algebraic quotient as follows:  Take the closure of $M$ in the appropriate Hilbert Scheme, call it $\overline{M}$.  Then we have the following diagram:
$$
\xymatrix{
\overline{C} \ar[d]^\pi \ar[r]^f & ev^{-1}(x)\\
\overline{M} }
$$
Here $\overline{C}$ is the restriction of the universal object over the Hilbert Scheme.  Since both the maps $\pi$ and $f$ are proper, the method explained in Koll\'{a}r's book \cite{Kollar-ratcurves} gives an open set $U \subset ev^{-1}(x)$, a variety $Y$ and a proper map $g: U \rightarrow Y$.  This map has the property that its fibers are equivalence classes under the relation of ``being connected by curves in $\overline{M}$".  In other words, if two points can be connected by a chain of curves from $\overline{M}$ then they map to the same point of $Y$.  The advantage of this method is that the map $g$ is proper, so that all of the curves $C_{y,\sigma, x}$ must be completely contained in $U$ (since they are contracted by $g$ by construction).  The same argument applied above will show that $Y$ must be a point.  Now however, it is not clear how to proceed.  Simply because two general points are connected by chains of curves from $M$, we are not able to (immediately) conclude that given a point $y \in \mc{C}$, then the positive parts of $\phi \in \tilde{M}_{2r}$ passing through $y$ point in ``all directions" at $y$.

We give an example to show what could go wrong.  Suppose $X = \PP^2$ and let $M$ be the set of all lines through a fixed point $p \in X$.  The algebraic method will give that the quotient is a point, because any two points in $X$ can be connected by a chain of curves in $M$ (at most $2$ clearly).  Applying the distribution method though, we will see that the quotient is a $\PP^1$ because at a general point of $X$ the curves in $M$ only point in $1$ direction.  The point $x$ is a singular point for the associated distribution.

It is not clear how to avoid such a situation in the case in which we are interested.  A solution to this problem though, would imply the result for all degree $\underline{d}$ smooth complete intersections in $\PP^{n}$ with $n = \Sigma d_i^2$.
\end{rmk}

\section{Appendix : Twisting Surfaces }\label{app-twisting-surfaces}

The information in this section is all contained in \cite{dJS}.  Due to the central nature it plays in the work above, the main definitions and results are recorded here for convenience/completeness.

All ruled surfaces considered will be maps $\pi: \Sigma \rightarrow \PP^1$ which are well known to be isomorphic to projective bundles $\Sigma \cong \PP(\OO \oplus \OO(-h))$.  The integer $h$ will be called the $h$-type of the ruled surface.  Denote by $F$ the class of a fiber and $E$ the curve class with minimal self intersection $E^2 = -h$.  Denote by $F'$ the divisor class $E + hF$.  It is the unique curve class such that $F'\cdot E = 0$.  For a variety $X \subset \PP^n$ and a map from a ruled surface $\Sigma \rightarrow X$, we have an induced morphism $(\pi, f): \Sigma \rightarrow \PP^1 \times X$.  When $(\pi, f)$ is finite, the vertical normal sheaf $N_{(\pi,f)}$ is defined to be $\text{Coker } T_\Sigma \rightarrow (\pi, f)^* T_{\PP^1 \times X}$.

Suppose there is a ruled surface on $X$ and a curve class in the ruled surface. Given a deformation of the curve in $X$, when is there a deformation of the surface which contains the deformation of the curve?  The following answer motivates the definition of twisting surfaces.

\begin{lem}\label{lem_moving_surfaces}
Suppose $f: \Sigma \rightarrow X$ is a ruled surface as above such that $(\pi,f)$ is finite.  Given a positive integer $n$, suppose further that $N_{(\pi,f)}$ is globally generated and that $H^1(\Sigma, N_{(\pi,f)}(-F' - nF)) = 0$.  Free curves on $\Sigma$ map to free curves on $X$.  Also, for reduced curves $D$ in $|\OO(F' + nF)|$, for every infinitesimal deformation of $D$ in $X$ there is an infinitesimal deformation of $\Sigma$ in $X$ containing the given deformation of $D$.
\end{lem}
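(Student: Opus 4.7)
The two assertions both reduce to cohomological manipulations with the defining right-exact sequence
$$T_\Sigma \to (\pi, f)^{*} T_{\PP^1 \times X} \to N_{(\pi, f)} \to 0$$
of the vertical normal sheaf.

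For the free-curve statement, let $\phi \colon \PP^1 \to \Sigma$ be free, so that $\phi^{*} T_\Sigma$ is a sum of line bundles of nonnegative degree. Pulling the sequence back by $\phi$, the image $I$ of $\phi^{*} T_\Sigma$ inside $\phi^{*} \pi^{*} T_{\PP^1} \oplus (f \circ \phi)^{*} T_X$ is a quotient of a globally generated bundle, hence is itself globally generated and satisfies $H^1(\PP^1, I) = 0$. Combined with the hypothesis that $N_{(\pi, f)}$ (and hence $\phi^{*} N_{(\pi, f)}$) is globally generated, the long exact sequence in cohomology forces $\phi^{*} \pi^{*} T_{\PP^1} \oplus (f \circ \phi)^{*} T_X$ to be globally generated on $\PP^1$. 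Since every direct summand of a globally generated bundle on $\PP^1$ is globally generated, $(f \circ \phi)^{*} T_X$ is globally generated and $f \circ \phi$ is a free curve on $X$.

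For the deformation statement, I would tensor $0 \to \OO_\Sigma(-D) \to \OO_\Sigma \to \OO_D \to 0$ by $N_{(\pi, f)}$; since $D \in |F' + nF|$, the resulting long exact sequence combined with the hypothesis $H^1(\Sigma, N_{(\pi, f)}(-F' - nF)) = 0$ gives surjectivity of the restriction map $H^0(\Sigma, N_{(\pi, f)}) \twoheadrightarrow H^0(D, N_{(\pi, f)}|_D)$. I would then identify $H^0(\Sigma, N_{(\pi, f)})$ with first-order deformations of $(\pi, f)$ and $H^0(D, N_{(\pi, f)|_D})$ with first-order deformations of $(\pi, f)|_D$, and exploit the normal-sheaf sequence of the factorization $D \hookrightarrow \Sigma \to \PP^1 \times X$,
$$0 \to N_{D/\Sigma} \to N_{(\pi, f)|_D} \to N_{(\pi, f)}|_D \to 0,$$
to split a prescribed deformation $s \in H^0(D, N_{(\pi, f)|_D})$ of $D$ into its image $\bar s \in H^0(D, N_{(\pi, f)}|_D)$, which lifts via the surjection to some $\tilde s \in H^0(\Sigma, N_{(\pi, f)})$ and produces a first-order thickening $\Sigma'$ of $\Sigma$ inside $\PP^1 \times X$, and a residual piece in $N_{D/\Sigma}$, which is realized by further deforming $D$ inside $\Sigma'$.

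The principal technical care lies in checking that these two pieces glue to reproduce exactly $s$. Concretely, the natural restriction-to-$D$ map $\rho \colon H^0(\Sigma, N_{(\pi, f)}) \to H^0(D, N_{(\pi, f)|_D})$ satisfies that $s - \rho(\tilde s)$ has vanishing image in $N_{(\pi, f)}|_D$, so by exactness it lies in $H^0(D, N_{D/\Sigma})$; this $N_{D/\Sigma}$-component provides the required additional deformation of $D$ inside $\Sigma'$, and the composed family recovers $s$. This absorbs the (non-canonical) splitting of the normal-sheaf sequence and is the only place in the argument where anything beyond formal cohomology is used.
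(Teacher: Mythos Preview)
The paper does not actually prove this lemma; its entire proof reads ``See \cite{dJS} 7.4.'' So there is nothing to compare against directly. Your sketch is a faithful reconstruction of the standard argument and is essentially correct.

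A few small points. For the free-curve statement your reasoning is fine; the only thing worth noting explicitly is that on $\PP^1$ the image $I$ is automatically locally free (torsion-free on a smooth curve), which is what lets you conclude $H^1(\PP^1,I)=0$ from global generation. For the deformation statement, the one place to be careful is the exactness of your normal-sheaf sequence
\[
0 \to N_{D/\Sigma} \to N_{(\pi,f)|_D} \to N_{(\pi,f)}|_D \to 0,
\]
since $(\pi,f)$ is only assumed finite, not a closed immersion; one needs to check left-exactness here, which amounts to the map $T_\Sigma|_D \to (\pi,f)^*T_{\PP^1\times X}|_D$ being injective at the generic points of $D$. This holds because $(\pi,f)$ is finite and $D$ is reduced. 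You might also clarify that ``infinitesimal deformation of $D$ in $X$'' is being read as a first-order deformation of the map $(\pi,f)|_D: D \to \PP^1\times X$; this is the interpretation consistent with how the lemma is applied later in the paper. With these caveats, your argument is the expected one.
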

\begin{proof}
See \cite{dJS} 7.4.
\end{proof}

Given a twisting surface $f: \Sigma \rightarrow X$, if the pushforward of $F$ maps to the curve class $\beta_1$ and the pushforward of $F'$ maps to $\beta_2$ then we say $f$ has class $(\beta_1, \beta_2)$.  On a complete intersection $X \subseteq \PP^n$ of dimension at least $3$, the curve classes are identified with integers, and this identification will be used.

\begin{rmk}
The work above concerns itself with the existence of twisting surfaces of $h$-type $0$.  These are surfaces which are isomorphic to $\PP^1 \times \PP^1$.  In this case, the minimal curve class is identified with sections of the map $\pi$ and $F' = E$.
\end{rmk}

The existence of 1-twisting surfaces of a given class implies the existence of 1-twisting surfaces of ``larger" class.  Similarly, the existence of 1 and 2 twisting surfaces imply the existence of $m$-twisting surfaces for all integers $m > 0$:

\begin{lem}\label{lem-more-twisting-surfaces}(\cite{dJS} Lemma 7.6 and Corollary 7.7)
1) Suppose that $f: \Sigma \rightarrow X$ is a 1-twisting of $h$-type 0 and class $(a,b)$.  Then for every pair of integers $(d_1, d_2)$ there is a 1-twisting surface of $h$-type 0 and class $(d_1a, d_2b)$.\\
2) Suppose that $f_1: \Sigma_1 \rightarrow X$ is a 1-twisting surface of $h$-type 0 and class $(a,b)$ and $f_2: \Sigma_2 \rightarrow X$ is a 2-twisting surface of $h$-type 0 and class $(a,c)$.  Further, suppose that $f_1$ and $f_2$ map their respective fiber classes to points parameterized by the same irreducible component of $\kspace{0}{X}{a}$.  Then for every positive integer $m$ and every non-negative integer $r$ there exists an $m$-twisting surface $f: \Sigma \rightarrow X$ of $h$-type 0 and class $(a, rb + (m - 1)c)$.  Moreover, the restriction of $f$ to the fiber $F$ parameterizes curves in the same components as $f_1$ and $f_2$ above.
3) For every $1 \leq l \leq n$, every $n$ twisting surface is also $l$ twisting.
\end{lem}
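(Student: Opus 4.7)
The plan is to address the three parts in order of increasing difficulty. All three arguments exploit the splitting $\mc{N}_f = \bigoplus_i \OO(a_i, b_i)$ on $\Sigma \cong \PP^1 \times \PP^1$, where the global generation hypothesis forces $a_i, b_i \geq 0$. Part (3) is immediate: the condition $H^1(\Sigma, \mc{N}_f(-F' - nF)) = 0$ translates, summand by summand via K\"unneth, into the requirement that $a_i \geq n - 1$ whenever $b_i \geq 1$. This is monotone in $n$, so $n$-twisting implies $l$-twisting for every $1 \leq l \leq n$.

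For part (1), pull back $f$ via a self-map $g = (g_2, g_1): \PP^1 \times \PP^1 \to \PP^1 \times \PP^1$ with $\deg g_i = d_i$, chosen so that the fiber class multiplies by $d_1$ and the section class by $d_2$. The composition $f \circ g$ has class $(d_1 a, d_2 b)$, its vertical normal sheaf is $g^* \mc{N}_f$ and hence globally generated, and the $1$-twisting vanishing is inherited summand by summand via $g^* \OO(c, d) = \OO(d_2 c, d_1 d)$ together with the non-negativity of the $a_i, b_i$.

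The substantive content is part (2). Build a nodal ruled surface $\Sigma_0$ by chaining $r$ copies of $\Sigma_1$ and $m - 1$ copies of $\Sigma_2$, glued in sequence along pairs of fibers. Fiber-matching is possible because the fibers of $f_1$ and $f_2$ lie in the same irreducible component of $\kspace{0}{X}{a}$, so general fibers of each can be deformed to agree as a common line on $X$; this also yields the final sentence of the statement. The total map $\Sigma_0 \to X$ has class $(a, rb + (m-1)c)$. Smooth the underlying nodal chain of $\PP^1$'s in the base to obtain a smooth $\PP^1$-bundle over $\PP^1$, and apply Lemma~\ref{lem_moving_surfaces} on each piece to produce a compatible deformation of the map to $X$. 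To verify the $m$-twisting condition on the smoothing, work on $\Sigma_0$ first and invoke upper semicontinuity: distribute the $m$ fiber twists of $-mF$ among the components by assigning at most two to each $\Sigma_2$ factor (absorbed by its $2$-twisting hypothesis) and at most one to each $\Sigma_1$ factor (absorbed by its $1$-twisting hypothesis). The total absorption budget $2(m-1) + r$ is at least $m$ whenever the class is nontrivial, so such a distribution exists. On each component the restricted twist matches its hypothesis and gives $H^1 = 0$; a Mayer--Vietoris argument across the nodes then yields $H^1(\Sigma_0, \mc{N}(-F' - mF)) = 0$.

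The main obstacle is the cohomological bookkeeping in part (2): identifying $\mc{N}$ on the reducible $\Sigma_0$ and its restrictions, checking that the gluing maps in the Mayer--Vietoris sequence are surjective on $H^0$ at each node (using positivity of $\mc{N}$ on each component so that no spurious $H^1$ contribution arises), and verifying that the chain smooths to $h$-type $0$ rather than to some $\PP(\OO \oplus \OO(-h))$ with $h > 0$. The last point follows from the triviality of the normal bundle of each matching fiber inside both glued pieces, but requires a short direct argument.
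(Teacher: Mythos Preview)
The paper does not prove this lemma itself; it is quoted from \cite{dJS} (their Lemma~7.6 and Corollary~7.7) and placed in the appendix purely for reference. So there is no in-paper argument to compare against, and your sketch has to stand on its own.

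The governing problem is your opening sentence: you assume $\mc{N}_f = \bigoplus_i \OO(a_i, b_i)$ on $\Sigma \cong \PP^1 \times \PP^1$. Vector bundles on $\PP^1 \times \PP^1$ do \emph{not} in general split as direct sums of line bundles, and nothing in the hypotheses (global generation of $f^*T_X$, finiteness of $(\pi,f)$) forces such a decomposition --- indeed $\mc{N}_f$ need not even be locally free when $(\pi,f)$ is merely finite rather than an embedding. Since all three parts invoke this splitting, none of the arguments is justified as written. The repair for part~(3) is painless: push forward by $\pi$. Global generation gives $R^1\pi_*\mc{N}_f(-F')=0$, so Leray yields $H^1(\Sigma,\mc{N}_f(-F'-nF)) = H^1\bigl(\PP^1,\,(\pi_*\mc{N}_f(-F'))(-n)\bigr)$; now $\pi_*\mc{N}_f(-F')$ genuinely splits on $\PP^1$, and your monotonicity observation applies to \emph{its} summands.

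Part~(1) has a second, independent gap. The vertical normal sheaf of $f\circ g$ is not $g^*\mc{N}_f$ once $g$ ramifies in the fibre direction: the relevant differential factors as $\OO(0,2) \xrightarrow{dg_1} \OO(0,2d_1) \to g^*f^*T_X$, and the first arrow drops rank along the ramification fibres of $g_1$, so the two cokernels differ by a torsion piece supported there. The clean route is to separate the two directions. A degree-$d_2$ base change $h:\PP^1 \to \PP^1$ takes $(a,b)$ to $(a,d_2 b)$ with $\mc{N}' = (h\times\text{id})^*\mc{N}_f$ honestly (flat base change of $(\pi,f)$), and $1$-twisting is preserved via the Leray description above. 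Increasing the fibre class from $a$ to $d_1 a$ is then handled by the same glue-and-smooth mechanism you propose for part~(2), gluing copies of $\Sigma$ along matching \emph{sections} rather than along fibres.

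Your plan for part~(2) --- chain $r$ copies of $\Sigma_1$ and $m-1$ copies of $\Sigma_2$ along common fibres, distribute the $m$ twists of $-mF$ according to each piece's $1$- or $2$-twisting budget, and conclude by semicontinuity --- has the right shape and is essentially what \cite{dJS} does. The checks you flag at the end (restricting $\mc{N}$ to components, Mayer--Vietoris surjectivity on $H^0$ across the nodes, $h$-type $0$ of the smoothing) are genuine but routine once positivity of $\mc{N}$ on each piece is in hand.
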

\singlespace
\bibliographystyle{amsalpha}
\bibliography{references}

\providecommand{\bysame}{\leavevmode\hbox to3em{\hrulefill}\thinspace}
\providecommand{\MR}{\relax\ifhmode\unskip\space\fi MR }
\providecommand{\MRhref}[2]{%
  \href{http://www.ams.org/mathscinet-getitem?mr=#1}{#2}
}
\providecommand{\href}[2]{#2}
\begin{thebibliography}{KSCT07}

\bibitem[DeL09]{D2}
Matthew DeLand, \emph{Cubic hypersurfaces and strong rational simple
  connectedness}, Ph.D Thesis, 2009.

\bibitem[dHS08]{dJHS}
Johan {de Jong}, Xuhua He, and Jason Starr, \emph{Families of rationally simply
  connected varieties over surfaces and torsors for semisimple groups}, arXiv,
  math.AG/0809.5224, 2008.

\bibitem[DM98]{DebarreManivel_Linear}
Olivier Debarre and Laurent Manivel, \emph{Sur la vari\'et\'e des espaces
  lin\'eaires contenus dans une intersection compl\`ete}, Math. Ann.
  \textbf{312} (1998), no.~3, 549--574. \MR{MR1654757 (99j:14048)}

\bibitem[dS06]{dJS}
Johan {de Jong} and Jason Starr, \emph{Low degree complete intersections are
  rationally simply connected}, electronic preprint :
  http://www.math.sunysb.edu/$\sim$jstarr/papers/, 2006.

\bibitem[GHS03]{GraberHarrisStarr}
Tom Graber, Joe Harris, and Jason Starr, \emph{Families of rationally connected
  varieties}, J. Amer. Math. Soc. \textbf{16} (2003), no.~1, 57--67
  (electronic).

\bibitem[Har77]{Hartshorne-book}
Robin Hartshorne, \emph{Algebraic geometry}, Springer-Verlag, New York, 1977,
  Graduate Texts in Mathematics, No. 52. \MR{MR0463157 (57 \#3116)}

\bibitem[HT06]{HassettTschinkel_WA_function_fields}
Brendan Hassett and Yuri Tschinkel, \emph{Weak approximation over function
  fields}, Invent. Math. \textbf{163} (2006), no.~1, 171--190. \MR{MR2208420
  (2007b:14109)}

\bibitem[Ill71]{Illusie_complexe_cotangent}
Luc Illusie, \emph{Complexe cotangent et d\'eformations. {I}}, Lecture Notes in
  Mathematics, Vol. 239, Springer-Verlag, Berlin, 1971. \MR{MR0491680 (58
  \#10886a)}

\bibitem[Kol96]{Kollar-ratcurves}
J{\'a}nos Koll{\'a}r, \emph{Rational curves on algebraic varieties}, Ergebnisse
  der Mathematik und ihrer Grenzgebiete. 3. Folge. A Series of Modern Surveys
  in Mathematics [Results in Mathematics and Related Areas. 3rd Series. A
  Series of Modern Surveys in Mathematics], vol.~32, Springer-Verlag, Berlin,
  1996. \MR{MR1440180 (98c:14001)}

\bibitem[KSCT07]{KST_foliations}
Stefan Kebekus, Luis Sol{\'a}~Conde, and Matei Toma, \emph{Rationally connected
  foliations after {B}ogomolov and {M}c{Q}uillan}, J. Algebraic Geom.
  \textbf{16} (2007), no.~1, 65--81. \MR{MR2257320 (2007m:14047)}

\bibitem[Lan52]{Tsen-Lang}
Serge Lang, \emph{On quasi algebraic closure}, Ann. of Math. (2) \textbf{55}
  (1952), 373--390.

\bibitem[She09]{Mingmin_foliations}
Mingmin Shen, \emph{Foliations and rational connectedness in positive
  characteristic}, Submitted., 2009.

\bibitem[Sta06]{Starr_Linear_Sections}
Jason Starr, \emph{Fano varieties and linear sections of hypersurfaces},
  electronic preprint: http://www.math.sunysb.edu/~jstarr/papers/linear2.pdf,
  2006.

\bibitem[Voi07]{Voisin_HodgeTheoryBookI}
Claire Voisin, \emph{Hodge theory and complex algebraic geometry. {I}}, english
  ed., Cambridge Studies in Advanced Mathematics, vol.~76, Cambridge University
  Press, Cambridge, 2007, Translated from the French by Leila Schneps.
  \MR{MR2451566}

\end{thebibliography}

\end{document}